\DeclareMathOperator{\Aut}{Aut}
\DeclareMathOperator{\tr}{tr}
\DeclareMathOperator{\U}{U}
\DeclareMathOperator{\GL}{GL}
\newcommand{\R}{\mathbb R}
\newcommand{\C}{\mathbb C}
\newcommand{\N}{\mathbb N}
\newcommand{\diff}{\text{\rm d}}
\newcommand{\del}{\partial}
\newcommand{\delb}{\bar{\del}}
\newcommand{\g}{\mathfrak{g}}
\newcommand{\un}{\mathfrak{u}}
\newcommand{\omegaFS}{\omega_{\mathrm{FS}}}
\renewcommand{\P}{\mathbb P}
\theoremstyle{plain}
	\newtheorem{theorem}{Theorem}
	\newtheorem{proposition}[theorem]{Proposition}
	\newtheorem{lemma}[theorem]{Lemma}
\theoremstyle{definition}
	\newtheorem{remark}[theorem]{Remark}
\theoremstyle{plain}
	\newtheorem*{theorem*}{Theorem}
	\newtheorem*{proposition*}{Proposition}
	\newtheorem*{lemma*}{Lemma}
	\newtheorem*{corollary*}{Corollary}
	\newtheorem*{conjecture*}{Conjecture}
	   \newtheorem*{theoremnoname*}{}
\theoremstyle{definition}
	\newtheorem*{definition*}{Definition}
	\newtheorem*{remark*}{Remark}
	\newtheorem*{remarks*}{Remarks}
\begin{document}

\title{Calabi flow and projective embeddings}
\author{Joel Fine\footnote{Supported by an FNRS postdoctoral fellowship.}\\
	~\\
	Appendix written by\\
	Kefeng Liu and Xiaonan Ma}
	
\date{ }

\maketitle

\begin{abstract}
Let $X \subset \C\P^N$ be a smooth subvariety. We study a flow, called balancing flow, on the space of projectively equivalent embeddings of $X$ which attempts to deform the given embedding into a balanced one. If $L\to X$ is an ample line bundle, considering embeddings via $H^0(L^k)$ gives a sequence of balancing flows. We prove that, provided these flows are started at appropriate points, they converge to Calabi flow for as long as it exists. This result is the parabolic analogue of Donaldson's theorem relating balanced embeddings to metrics with constant scalar curvature  \cite{donaldson-scape1}. In our proof we combine Donaldson's techniques with an asymptotic result of Liu--Ma \cite{liu-ma-arosnricdg} which, as we explain, describes the asymptotic behaviour of the derivative of the map $\mathrm{FS} \circ \mathrm{Hilb}$ whose fixed points are balanced metrics.
\end{abstract}

\section{Introduction}

\subsection{Overview of results}

The idea of approximating K\"ahler metrics by projective embeddings goes back several years. The fundamental fact---proved by Tian---is that the projective metrics are dense in the space of all K\"ahler metrics. More precisely, let $L\to X^n$ be an ample line bundle over a complex manifold and let $h$ be a Hermitian metric in $L$ whose curvature defines a K\"ahler metric $\omega \in c_1(L)$. Together, $h$ and $\omega$ determine an $L^2$-inner-product on the vector spaces $H^0(L^k)$. Using an $L^2$-orthonormal basis of sections for each $H^0(L^k)$ gives a sequence of embeddings $\iota_k \colon X \to \C\P^{N_k}$ into larger and larger projective spaces and hence a sequence of projective metrics $\omega_k = \frac{1}{k}\iota_k^* \omegaFS$ in the same cohomology class as $\omega$. 

\begin{theorem}[Tian \cite{tian-oasopkmoam}]\label{Tian}
The metrics $\omega_k$ converge to $\omega$ in $C^\infty$ as $k \to \infty$.
\end{theorem}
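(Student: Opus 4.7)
The plan is to reduce the statement to an asymptotic expansion of the Bergman kernel (equivalently, the density-of-states function), which carries all the geometric information about the embeddings $\iota_k$.

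First I would choose an $L^2$-orthonormal basis $\{s_0^{(k)},\dots,s_{N_k}^{(k)}\}$ of $H^0(L^k)$ and form the Bergman kernel on the diagonal,
\[
\rho_k(x) \;=\; \sum_{i=0}^{N_k} |s_i^{(k)}(x)|^2_{h^k}.
\]
A direct computation in a local trivialisation shows that the pull-back of the Fubini--Study metric under $\iota_k$ is
\[
\iota_k^*\omegaFS \;=\; k\omega + i\del\delb \log \rho_k,
\]
and hence $\omega_k = \omega + \tfrac{1}{k}\, i\del\delb \log \rho_k$. So the theorem reduces to showing that $\tfrac{1}{k}\log \rho_k \to 0$ in $C^\infty(X)$.

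Next I would invoke (or prove) the Tian--Zelditch--Catlin--Lu asymptotic expansion
\[
\rho_k(x) \;=\; k^n + a_1(x)\, k^{n-1} + a_2(x)\, k^{n-2} + \cdots,
\]
with $a_1$ a universal local expression in the curvature of $\omega$ (proportional to $\Scal(\omega)$), and where the expansion holds in $C^\infty$ with explicit remainder bounds: for every $r$ and $\ell$ there is $C_{r,\ell}$ such that
\[
\Bigl\| \rho_k - \textstyle\sum_{j=0}^{r} a_j k^{n-j}\Bigr\|_{C^\ell} \;\leq\; C_{r,\ell}\, k^{n-r-1}.
\]
Taking logarithms gives $\log \rho_k = n\log k + O(k^{-1})$ in every $C^\ell$, so $\tfrac{1}{k}\log \rho_k \to 0$ and therefore $i\del\delb(\tfrac{1}{k}\log\rho_k) \to 0$ in every $C^\ell$, which is the required $C^\infty$ convergence $\omega_k \to \omega$.

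The main obstacle is the asymptotic expansion itself, not the formal reduction above. Establishing it amounts to constructing a sufficiently accurate parametrix for the orthogonal projection $H^0(L^k) \hookrightarrow L^2(L^k)$. The standard approach is to exploit the fact that, after rescaling coordinates by $k^{-1/2}$ around a point $x \in X$, the operator $\delb$ together with the metric $h^k$ look like the model $\delb$-operator on $\C^n$ with the Gaussian weight coming from the flat K\"ahler form, whose Bergman kernel is known explicitly. The work is in controlling the error terms uniformly in $x$ and extracting the full $C^\infty$ expansion with the stated derivative bounds; this can be done via peak section methods (Tian's original argument), via the Szeg\H{o} kernel on the unit circle bundle (Zelditch, Catlin), or via the heat-kernel / localisation techniques that appear in the appendix to this paper.
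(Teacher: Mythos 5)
Your proposal is correct and follows essentially the same route as the paper: the paper likewise reduces Tian's theorem to the relation $\omega = \omega_k + \tfrac{i}{2k}\delb\del\log\rho_k$ together with the $C^\infty$ asymptotic expansion of $\rho_k$ (Theorem \ref{asymptotics of Bergman}), whose leading term is constant. The only differences are immaterial normalisation constants in the potential identity and the fact that, like the paper, you defer the real work to the Catlin--Lu--Tian--Zelditch expansion, which you correctly identify as the substantive ingredient.
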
 

From here it is natural to ask if any of the objects studied in K\"ahler geometry can be approximated by objects in projective geometry. An example of this phenomenon, due to Donaldson, is the strong  relationship between balanced embeddings and K\"ahler metrics of constant scalar curvature, which is the central focus of this article. 

Before stating Donaldson's result, we first recall the definition of a balanced embedding, originally due to Luo \cite{luo-gcfgmsopm} and Zhang \cite{zhang-harossv} (see also Bour\-guignon--Li--Yau \cite{bourguignon-li-yau-ubftfeoas}). Let $\mu \colon \C\P^N \to i\un(N+1)$ be the Hermitian-matrix valued function, given in homogeneous unitary coordinates by $\mu = (\mu_{\alpha \beta})$ where
$$
\mu_{\alpha \beta}[x_0 : \cdots : x_N]
=
\frac{x_\alpha \bar x_\beta}{\sum |x_\gamma|^2}.
$$
Given a smooth subvariety $X \subset \C\P^N$, we consider the integral of $\mu$ over $X^n$ with respect to the Fubini--Study metric:
$$
\bar \mu = \int_X \mu\, \frac{\omegaFS^n}{n!}.
$$
The subvariety is called \emph{balanced} if $\bar \mu$ is a multiple of the identity. 

Donaldson proved the following:

\begin{theorem}[Donaldson \cite{donaldson-scape1}]\label{Donaldson 1}
Suppose that for all large $k$ there is a basis of $H^0(L^k)$ which gives a balanced embedding $\iota_k\colon X \to \C\P^{N_k}$ and, moreover, that the metrics $\omega_k = \frac{1}{k}\iota_k^*\omegaFS$ converge in $C^\infty$ to a metric $\omega$. Then $\omega$ has constant scalar curvature.
\end{theorem}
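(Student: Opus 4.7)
The plan is to reinterpret the balanced condition as the statement that a certain Bergman function of the sequence is identically constant on $X$, and then to invoke the Tian--Zelditch--Catlin--Lu asymptotic expansion of that Bergman function to extract constancy of $\Scal(\omega)$ in the limit.

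First I would unpack the balanced condition. Given a basis $\{s_\alpha\}$ of $H^0(L^k)$ defining $\iota_k$, let $\tilde h_k$ denote the Hermitian metric on $L$ such that $\tilde h_k^{\otimes k}$ is the pull-back along $\iota_k$ of the Fubini--Study metric on $\O(1)$. Then $\tilde h_k$ has curvature $\omega_k$, and by construction $\sum_\alpha |s_\alpha|^2_{\tilde h_k^{\otimes k}} \equiv 1$ on $X$. In any local trivialisation, $\mu_{\alpha\beta}\circ\iota_k(x) = \langle s_\alpha, s_\beta\rangle_{\tilde h_k^{\otimes k}}(x)$, and since $\iota_k^*\omegaFS = k\omega_k$, the equation $\bar\mu = c\cdot I$ is equivalent to
$$k^n\int_X \langle s_\alpha, s_\beta\rangle_{\tilde h_k^{\otimes k}}\,\frac{\omega_k^n}{n!} = c\,\delta_{\alpha\beta}.$$
Thus, after a uniform rescaling, the $s_\alpha$ form an $L^2$-orthonormal basis with respect to the inner product determined by $(\tilde h_k, \omega_k)$, and the associated Bergman function $\rho_k^{(\tilde h_k,\omega_k)} = \sum_\alpha|s_\alpha|^2_{\tilde h_k^{\otimes k}}\big/(c/k^n)$ reduces to the constant $k^n/c$ on $X$.

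Next I would feed this constancy into the Tian--Zelditch--Catlin--Lu expansion
$$\rho_k^{(\tilde h_k, \omega_k)}(x) = k^n + \tfrac{1}{2}\Scal(\omega_k)(x)\,k^{n-1} + O(k^{n-2}),$$
with the remainder uniform in $x$. Rearranging, $\Scal(\omega_k)(x) = A_k + O(k^{-1})$ for some constant $A_k$ independent of $x$. Since $\omega_k\to\omega$ in $C^\infty$ gives $\Scal(\omega_k)\to \Scal(\omega)$ uniformly, passing to the limit forces $\Scal(\omega)$ to be constant on $X$.

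The main obstacle is to arrange that the remainder $O(k^{n-2})$ in the Tian--Zelditch--Catlin--Lu expansion is genuinely uniform along the varying sequence of reference data $(\tilde h_k,\omega_k)$, rather than merely for a single fixed background metric. Because $\omega_k\to\omega$ in $C^\infty$, the geometric quantities entering the coefficients and remainder estimates are uniformly bounded in every $C^m$-norm, and standard refinements of the theorem (in the spirit of Lu's explicit computation of the subleading coefficient) supply the required uniformity; arranging and applying this uniform version is the technical heart of the argument.
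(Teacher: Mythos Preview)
Your argument is correct and is precisely the approach the paper attributes to Donaldson: the balanced condition forces the Bergman function $\rho_k(\omega_k)$ to be constant, and then the uniform asymptotic expansion (Theorem~\ref{asymptotics of Bergman}, especially the uniformity clause in part~3) applied along the $C^\infty$-convergent family $\omega_k$ yields $S(\omega_k)=\text{const}+O(k^{-1})$, hence $S(\omega)$ constant in the limit. The only minor discrepancy is the normalisation of the subleading coefficient ($\tfrac{1}{2\pi}S$ in the paper's conventions rather than $\tfrac{1}{2}S$), which of course does not affect the conclusion.
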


\begin{theorem}[Donaldson \cite{donaldson-scape1}]\label{Donaldson 2}
Suppose that $\Aut(X,L)$ is discrete and that the class $c_1(L)$ contains a metric $\omega$ of constant scalar curvature. Then for all large $k$ there is a basis of $H^0(L^k)$ giving a balanced embedding $\iota_k \colon X\to \C\P^{N_k}$ and, moreover, the  metrics $\omega_k=\frac{1}{k}\iota^*_k\omegaFS$ converge in $C^\infty$ to $\omega$.
\end{theorem}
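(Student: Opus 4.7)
The plan is to realize balanced embeddings as fixed points of a finite-dimensional map and locate such fixed points near a canonical approximate embedding coming from the given CSC metric $\omega$. Concretely, for each $k$ the group $\GL(N_k+1,\C)$ acts on the space of projectively equivalent embeddings, and the balanced condition $\bar \mu \in \R\cdot I$ is exactly the fixed-point condition for a map $T_k = \mathrm{FS}_k \circ \mathrm{Hilb}_k$ from Hermitian inner products on $H^0(L^k)$ to itself (where $\mathrm{Hilb}_k$ takes a metric on $X$ to the $L^2$-product on sections and $\mathrm{FS}_k$ takes an inner product on $H^0(L^k)$ to the associated Fubini--Study metric on $X$). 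Starting from $H_k^0 = \mathrm{Hilb}_k(\omega)$, Theorem \ref{Tian} already tells us $T_k(H_k^0)$ is close to $H_k^0$; the first step is to upgrade this to a quantitative statement by showing that when $S(\omega)$ is constant the deviation $T_k(H_k^0) - H_k^0$ is of order $k^{-2}$ in an appropriate operator norm. This uses the Tian--Yau--Zelditch--Catlin--Lu expansion $\rho_k(\omega) = k^n + \tfrac{1}{2}S(\omega)k^{n-1} + O(k^{n-2})$: the subleading term is a nonzero constant precisely when the scalar curvature is, and this is exactly what converts an $O(k^{-1})$ error into an $O(k^{-2})$ one after projecting onto the trace-free part of $i\un(N_k+1)$.

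The second step is to linearize $T_k$ at $H_k^0$ and produce a lower bound, uniform in $k$, on the eigenvalues of $I - dT_k$ restricted to the trace-free Hermitian matrices. This is the step I expect to be the main obstacle. The target estimate, which drives Donaldson's entire picture, is that $I - dT_k$ behaves like $k^{-2}$ times a discretisation of the Lichnerowicz operator $\mathcal D^* \mathcal D$ acting on real functions on $X$; the hypothesis that $\Aut(X,L)$ is discrete is precisely the condition $\ker \mathcal D^* \mathcal D = \R$, which yields a strictly positive spectral gap on mean-zero functions. Translating this analytic gap into a uniform bound on $\|(I - dT_k)^{-1}\| = O(k^2)$ on the relevant subspace requires matching the Bergman-kernel asymptotics with the spectral theory of $\mathcal D^* \mathcal D$ on sections; this is where the Liu--Ma asymptotic advertised in the abstract plays its role.

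The third step combines the two estimates via a quantitative inverse function theorem, or equivalently a Newton iteration on the finite-dimensional space of inner products. Because the initial error is $O(k^{-2})$, the inverse of the linearization is $O(k^2)$, and the nonlinear remainder of $T_k$ is uniformly quadratic on a ball of controlled radius, the Newton scheme starting at $H_k^0$ converges for all large $k$ to a genuine fixed point $H_k$ of $T_k$ with $\|H_k - H_k^0\| = O(k^{-2})$. An $H_k$-orthonormal basis of $H^0(L^k)$ then gives a balanced embedding $\iota_k$, and the bound on $H_k - H_k^0$ together with Theorem \ref{Tian} and standard elliptic bootstrapping (applied in each Sobolev norm, whose constants one tracks through the argument) yields $\omega_k = \tfrac{1}{k}\iota_k^* \omegaFS \to \omega$ in $C^\infty$. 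The argument is finite-dimensional at each level $k$; the entire difficulty lies in the $k$-dependence of the constants, which is why the asymptotic analysis of the Bergman kernel and of the linearization of $T_k$ are the core of the proof.
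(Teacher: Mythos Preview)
This theorem is not proved in the present paper: it is quoted as a result of Donaldson \cite{donaldson-scape1}, so there is no proof here to compare your proposal against directly. What the paper does contain is a description of the structure of Donaldson's argument (see the outline in \S1.4.2 and the discussion opening \S3.2), since the authors model their parabolic argument on it. Comparing your sketch with that description, two discrepancies stand out.

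First, Donaldson's route is not a Newton iteration from the single approximation $\mathrm{Hilb}_k(\omega)$. As the paper explains, the naive projective approximation to a CSC metric is only $O(k^{-1})$ from being balanced (or $O(k^{-2})$ after using that the subleading Bergman coefficient is constant), and \emph{this is not strong enough}: Donaldson instead perturbs $\omega$ by a finite sum $\sum_{j=1}^m k^{-j}\eta_j$, with each $\eta_j$ solving a linear elliptic equation $L\eta_j=f_j$, to produce approximations that are $O(k^{-m})$ from balanced for any fixed $m$. Only after beating every power of $k$ does one close the argument using the lower bound on the linearisation coming from $\Aut(X,L)$ discrete. Your scheme, with initial error $O(k^{-2})$ and $\|(I-\diff T_k)^{-1}\|=O(k^{2})$, gives a first Newton correction of size $O(1)$; without further input you cannot conclude that the iteration stays in a region where your estimates remain valid. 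This is precisely the obstacle the perturbation trick in \S4.1 of \cite{donaldson-scape1} was designed to overcome, and you have not addressed it.

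Second, the linearisation estimate in Donaldson's proof is not obtained via the Liu--Ma asymptotics. Donaldson's paper predates \cite{liu-ma-arosnricdg}; he proves the needed lower bound on the derivative of the moment map by a direct argument (his Lemma 24 and surrounding estimates, summarised here in \S\ref{analytic estimates}). The Liu--Ma result enters the present paper only for the new parabolic statements, where it is used to identify $\diff(\mathrm{FS}_k\circ\mathrm{Hilb}_k)$ asymptotically with the heat operator; it plays no role in the cited Theorem~\ref{Donaldson 2}.
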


The goal of this article is to prove the parabolic analogue of Donaldson's Theorems. In \cite{calabi-ekm} Calabi introduced a parabolic flow, \emph{Calabi flow}, which one might hope deforms a given K\"ahler metric towards a constant scalar curvature one. The flow is
$$
\frac{\del \omega}{\del t}
=
i\delb\del \,S\left(\omega(t)\right),
$$
where $S$ denotes scalar curvature.

As we will explain, the projective analogue of this is \emph{balancing flow}. Given an embedding $\iota \colon X \to \C\P^N$, let $\bar \mu_0$ denote the trace-free part of $\bar \mu$, so that $\bar \mu_0 = 0$ if and only if the embedding is balanced. The Hermitian matrix $\bar\mu_0$ defines a vector field on $\C\P^N$ and consequently an infinitesimal deformation of the embedding $\iota$. This defines balancing flow:
$$
\frac{\diff \iota}{\diff t} = -\bar\mu_0(\iota).
$$ 

Our results concern the asymptotics of a certain sequence of balancing flows. Let  $h$ be a Hermitian metric in $L$ whose curvature gives a K\"ahler form $\omega \in c_1(L)$. As in the description of Tian's Theorem \ref{Tian}, let $\iota_k$ be the embedding defined via a basis of $H^0(L^k)$ which is orthonormal with respect to the L$^2(h, \omega)$ inner-product and let $\omega_k = \frac{1}{k}\iota_k^*\omegaFS$ be Tian's sequence of projective approximations to $\omega$. For each $k$, we run a sped-up version of the balancing flow, so that $\iota_k(t)$ solves
\begin{equation}\label{balancing flow}
\frac{\diff \iota_k}{\diff t}
=
-2\pi k^2 \bar\mu_0(\iota_k),
\quad
\iota_k(0) = \iota_k.
\end{equation}
We study the sequence $\omega_k(t) = \frac{1}{k}\iota_k(t)^*\omegaFS$ of metric flows, proving the parabolic analogue of Donaldson's Theorems \ref{Donaldson 1} and \ref{Donaldson 2}:

\begin{theorem}\label{main result 1}
Suppose that for each $t\in [0,T]$ the metric $\omega_k(t)$ converges in $C^\infty$ to a metric $\omega(t)$ and, moreover, that this convergence is $C^1$ in $t$. Then the limit $\omega(t)$ is a solution to Calabi flow starting at $\omega$.
\end{theorem}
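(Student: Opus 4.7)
My strategy is to differentiate the balancing flow equation (\ref{balancing flow}) directly, use the moment-map/Hamiltonian picture to rewrite $\del_t\omega_k(t)$ as $i\del\delb$ of an explicit function on $X$, and then use the asymptotic result of Liu--Ma described in the appendix to identify that function, in the limit, with the scalar curvature of $\omega(t)$.

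First, I would translate the flow of embeddings into a flow of K\"ahler forms. The matrix $\bar\mu_0(\iota_k(t))$ is trace-free Hermitian and the associated vector field on $\C\P^{N_k}$ is gradient-like; its Lie derivative on $\omegaFS$ is exact, of the form $c\, i\del\delb\,\tr(\bar\mu_0\,\mu)$ for a fixed constant $c$. Combining this with (\ref{balancing flow}) and dividing by $k$ yields
\[
\frac{\del\omega_k}{\del t}(t) \;=\; i\del\delb\, f_k(t), \qquad f_k(t) \;=\; C\,k\,\iota_k(t)^*\tr\bigl(\bar\mu_0(\iota_k(t))\,\mu\bigr),
\]
for an explicit universal constant $C$. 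The proof reduces to showing that $f_k(t) \to S(\omega(t))$ in $C^\infty(X)$ modulo an additive constant, together with an argument that the limit commutes with $\del/\del t$.

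Second, I would recognise $f_k$ as exactly the object governed by Liu--Ma's asymptotics. At any $x\in X$, the matrix $\iota_k(t)^*\mu(x)$ is (up to normalisation) the reproducing kernel of $H^0(L^k)$ at $x$ in the Hermitian inner product for which $\iota_k(t)$ is a unitary basis; this inner product is obtained from the initial $L^2(h,\omega)$ inner product by the flow on inner products induced by balancing. Thus $\omega_k(t) = \tfrac{1}{k}\mathrm{FS}(\text{that inner product})$ fits into the framework of the map $\mathrm{FS}\circ\mathrm{Hilb}$, and the appendix gives an asymptotic expansion of $k\,\tr(\bar\mu_0\,\iota_k^*\mu)$ whose leading non-constant part is, for the correct normalising constant, $S(\omega_k(t))$. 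Because $\omega_k(t)\to\omega(t)$ in $C^\infty$ by hypothesis, this upgrades to $f_k(t)\to S(\omega(t))+c(t)$ in $C^\infty(X)$, with $c(t)$ killed by $i\del\delb$.

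Finally, the hypothesis that $\omega_k\to\omega$ is $C^1$ in $t$ lets me interchange the $t$-derivative with the $k$-limit, giving
\[
\frac{\del\omega}{\del t} \;=\; \lim_{k\to\infty}\frac{\del\omega_k}{\del t} \;=\; \lim_{k\to\infty} i\del\delb f_k \;=\; i\del\delb S(\omega),
\]
which is Calabi flow starting at $\omega$. The main obstacle is the middle step: not merely pointwise-in-$t$ validity of the Liu--Ma expansion applied to the one-parameter family $\omega_k(t)$, but uniformity of the remainder terms in $t\in[0,T]$ in a topology strong enough for $i\del\delb$ to pass through the limit. The uniform-in-$t$ $C^\infty$ convergence of $\omega_k(t)\to\omega(t)$ is what should make this possible, since it controls all of the geometric data on which the Bergman-kernel remainder bounds depend; extracting the scalar curvature cleanly from the \emph{derivative} of $\mathrm{FS}\circ\mathrm{Hilb}$ along the flow, rather than from the fixed-point equation as in Donaldson's static theorems, is the technical heart of the argument.
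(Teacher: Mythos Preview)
Your overall architecture matches the paper's: write $\partial_t\omega_k = i\partial\bar\partial\beta_k$ for the explicit balancing potential $\beta_k = -2\pi k\,\tr(\bar\mu_0\mu)|_X$, prove $\beta_k(\omega_k(t)) \to S(\omega(t)) - \bar S$ in $C^\infty$, and use the $C^1$-in-$t$ hypothesis to identify the limit of $\partial_t\omega_k$ with $\partial_t\omega$. The paper packages the middle step as Theorem~\ref{limit of balancing potentials}.

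There is, however, a real confusion in your second step about \emph{which} asymptotic result produces the scalar curvature. You write that ``the appendix gives an asymptotic expansion of $k\,\tr(\bar\mu_0\,\iota_k^*\mu)$ whose leading non-constant part is $S(\omega_k(t))$'' and speak of ``extracting the scalar curvature from the derivative of $\mathrm{FS}\circ\mathrm{Hilb}$''. But the Liu--Ma result (Theorems~\ref{Liu-Ma} and~\ref{converge in Cinfty}) only tells you that $Q_k(f)\to f$; it contains no scalar curvature. When you actually unwind $\beta_k$ you get
\[
\beta_k \;=\; \bigl(2\pi k - \bar S + O(k^{-1})\bigr) - \bigl(2\pi k - S(\omega_k) + O(k^{-1})\bigr)\,Q_k\bigl(1 + O(k^{-1})\bigr),
\]
and the term $S(\omega_k)$ appears because of the expansion $\rho_k(\omega_k) = k^n + \tfrac{1}{2\pi}S(\omega_k)k^{n-1} + \cdots$ of the Bergman function (Theorem~\ref{asymptotics of Bergman}), not because of Liu--Ma. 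The role of Liu--Ma is narrower and more technical: once the leading terms have cancelled you are left with $(S(\omega_k)-\bar S)\bigl(1 + O(k^{-1}) + Q_k(O(k^{-1}))\bigr)$, and one must show $Q_k(O(k^{-1}))\to 0$ in every $C^r$. This is not immediate, since the convergence $Q_k(f)\to f$ is not uniform in $f$; the paper handles it by splitting off finitely many fixed functions (to which Theorem~\ref{converge in Cinfty} applies) and bounding the tail via Theorem~\ref{Liu-Ma} and the $L^2$-contractivity of the heat semigroup.

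Your framing via $\mathrm{FS}\circ\mathrm{Hilb}$ is also slightly off: the balancing flow $\omega_k(t)$ need not lie in the image of $\mathrm{Hilb}_k$, so there is no underlying Hermitian metric on $L$ whose $L^2$-inner-product the embedding $\iota_k(t)$ comes from. What saves the argument is that Theorem~\ref{asymptotics of Bergman} applies to \emph{any} positively curved metric, in particular to the Bergman metric $\omega_k(t)$ itself, with remainder bounds uniform once $\omega_k(t)\to\omega(t)$ in $C^\infty$. So your instinct about uniformity being the crux is right, but the asymptotic input you need is primarily Theorem~\ref{asymptotics of Bergman}, with Liu--Ma in a supporting role.
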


\begin{theorem}\label{main result 2}
Suppose that the Calabi flow $\omega(t)$ starting at $\omega$ exists for $t\in[0,T]$. Then for each $t$, the metric $\omega_k(t)$ converges in $C^\infty$ to $\omega(t)$. Moreover, this convergence is $C^1$ in $t$. 
\end{theorem}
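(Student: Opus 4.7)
The plan is to construct a reference family of projective embeddings directly from the given Calabi flow and then show that the genuine balancing flow is a small perturbation of this reference. Given the Calabi flow $\omega(t)$ on $[0,T]$, let $h(t)$ be the family of Hermitian metrics on $L$ whose curvatures are $\omega(t)$. The pair $(h(t),\omega(t))$ determines an $L^2$ inner product on $H^0(L^k)$, and an orthonormal basis for it defines a Tian-type embedding $\tilde\iota_k(t)$ with projective metric $\tilde\omega_k(t) = \frac{1}{k}\tilde\iota_k(t)^*\omegaFS$. Applying Theorem~\ref{Tian} with smooth parameter dependence yields $\tilde\omega_k(t) \to \omega(t)$ in $C^\infty$, $C^1$ in $t$, uniformly on $[0,T]$. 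Since $\iota_k(0) = \tilde\iota_k(0)$ by construction, it remains to prove that $\iota_k(t) - \tilde\iota_k(t) \to 0$ in the same sense.

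Subtracting the two evolutions gives the error equation
$$
\frac{d}{dt}\bigl(\iota_k - \tilde\iota_k\bigr)
= -2\pi k^2\bigl[\bar\mu_0(\iota_k) - \bar\mu_0(\tilde\iota_k)\bigr] + E_k(t),
\qquad
E_k(t) := -\frac{d\tilde\iota_k}{dt} - 2\pi k^2\bar\mu_0(\tilde\iota_k).
$$
The central claim is that $E_k \to 0$ to every polynomial order in $k^{-1}$, uniformly on $[0,T]$; this is where the Liu--Ma asymptotic expansion for the derivative of $\mathrm{FS}\circ\mathrm{Hilb}$ enters. Combined with the Bergman kernel expansion $\bar\mu \sim k^n + (4\pi)^{-1}S(\omega)\,k^{n-1} + \cdots$, their result says that, at a Tian-type embedding, the infinitesimal deformation $-2\pi k^2\bar\mu_0$ corresponds under $\mathrm{FS}$ to the operator $i\delb\del S(\omega)$ modulo lower-order terms. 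Because the reference path came from a genuine Calabi flow, this matches $d\tilde\omega_k/dt$ up to $O(k^{-N})$ for every $N$.

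Writing $\varepsilon_k(t) := \iota_k(t) - \tilde\iota_k(t)$, the error equation takes the form
$$
\dot\varepsilon_k = A_k(t)\,\varepsilon_k + E_k(t) + O(|\varepsilon_k|^2),
$$
where $A_k$ is the linearisation of $-2\pi k^2\bar\mu_0$ along the reference. If one has a uniform-in-$k$ operator bound on $A_k$ in the natural $k$-dependent $L^2$ structure on Hermitian operators of $H^0(L^k)$, then Gronwall's inequality with $\varepsilon_k(0)=0$ immediately gives $\|\varepsilon_k(t)\| = O(k^{-N})$ on $[0,T]$ for every $N$. Upgrading to $C^\infty$ convergence of $\omega_k(t)$ follows by combining this with the parametrised Tian expansion and differentiating in $t$.

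The main obstacle is the uniform control of $A_k$. The prefactor $k^2$ makes the ODE ``stiff'', so a naive bound in a $k$-independent norm produces a constant of the wrong order. The resolution, again via Liu--Ma, is that in the correctly scaled inner product the operator $A_k$ converges to (a multiple of) the Lichnerowicz operator $\mathcal{D}^*\mathcal{D}$ of $\omega(t)$, which is positive and hence dissipative. This is precisely what makes balancing flow a faithful $\C\P^{N_k}$-level discretisation of Calabi flow: it ensures that the Gronwall constant is bounded independently of $k$, closing the argument. A secondary technical point, worth isolating, is making sense of ``$\iota_k - \tilde\iota_k$'' as a tangent vector and choosing a parametrisation of the space of embeddings in which the above estimates are genuinely linear; I expect this to be handled, as in Donaldson's elliptic proof, by working on the space of Hermitian inner products on $H^0(L^k)$ with its natural exponential coordinates.
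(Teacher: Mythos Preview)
Your central claim --- that the forcing term $E_k$ vanishes to every polynomial order in $k^{-1}$ --- is false, and this is the decisive gap. For the \emph{unperturbed} Tian embeddings $\tilde\iota_k(t)$ of the Calabi flow, the discrepancy between $d\tilde\iota_k/dt$ and $-2\pi k^2\bar\mu_0(\tilde\iota_k)$ is only $O(k^{-1})$ in the natural rescaled norm on $\mathcal B_k$, not $O(k^{-N})$. Concretely, expanding both tangent vectors via the Bergman asymptotics yields a difference of the form $\int_X (s_\alpha,s_\beta)\bigl(\Delta S - F + O(k^{-1})\bigr)\,\omega^n/n!$, where $F$ is a curvature polynomial coming from the next coefficient in the expansion of $\rho_k$; the function $\Delta S - F$ has no reason to vanish. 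The Liu--Ma theorem controls $Q_k(f)-f$ only to order $k^{-1}$, so it cannot supply the improvement you assert. The paper resolves this by replacing $\omega(t)$ with a $k$-dependent perturbation $\omega(t)+i\delb\del\sum_{j=1}^m k^{-j}\eta_j(t)$, where each $\eta_j$ solves a linear parabolic equation $\dot\eta_j + L_t\eta_j = g_j$ designed to kill the next surviving term in the expansion of $E_k$. Without this step you are stuck at $O(k^{-1})$, and that is not enough: passing from distance in $\mathcal B_k$ to the $C^r$-norm on curvatures costs a factor of order $k^{(r/2)+1+n}$, so one needs $d_k(h_k,h'_k)=O(k^{-m-1})$ with $m>r/2+1+n$.

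A second gap is your treatment of the linearisation $A_k$. You assert that in the correct scaling $A_k$ converges to $\mathcal D^*\mathcal D$ and is therefore dissipative, giving a $k$-uniform Gronwall constant. Neither the convergence nor the sign is established; note that even the linearised scalar curvature operator $L_t=\mathcal D^*\mathcal D-(\nabla\cdot,\nabla S)$ can have finitely many negative eigenvalues, so dissipativity is not automatic. The paper bypasses linearisation entirely: balancing flow is the downward gradient flow of a geodesically convex function on $\mathcal B_k$ and is therefore distance-decreasing, a global fact which directly gives $d_k(h_k(t),h'_k(t))\le Ck^{-m-1}$ once the perturbed reference is in place. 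Separate analytic estimates (Donaldson's bound $\|H_A\|_{C^r}\le C\|\bar\mu\|_{\mathrm{op}}\|A\|$) and projective estimates (controlling $\|\bar\mu\|_{\mathrm{op}}$ along geodesics in $\mathcal B_k$) then convert this into $C^r$ convergence of the metrics.
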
 

\subsection{A moment map interpretation}\label{moment map picture}

The following picture will not be used directly in our proofs, but it gave the original motivation for this work, so it is perhaps worth mentioning briefly here. First, we recall the standard moment map set-up, which consists of a group $K$ acting by K\"ahler isometries on a K\"ahler manifold $Z$, along with an equivariant moment map $m\colon Z \to \g^*$. The action extends to the complexified group $G = K^\C$ giving a holomorphic, but no longer isometric, action. The problem one is interested in is finding a zero of $m$ in a given $G$-orbit. By $K$-equivariance, this becomes a question about the behaviour of a certain function, called the  \emph{Kempf--Ness function}, $F \colon G/K \to \R$ on the symmetric space $G/K$. This function is geodesically convex and its derivative is essentially $m$. Hence there is a zero of the moment map in the orbit if and only if $F$ attains its minimum. A natural way to search for such a minimum is to consider the downward gradient flow of $F$.

In \cite{donaldson-scape1}, Donaldson explains how this is relevant in our situation. On the one hand, scalar curvature can be interpreted as a moment map, an observation due to Donaldson \cite{donaldson-rogtcga4mt} and Fujiki \cite{fujiki-msopamakm}. In this case the symmetric space is the space $\mathcal H$ of positively curved Hermitian metrics in $L$ or, equivalently once a reference metric in $c_1(L)$ is chosen, the space of K\"ahler potentials (see the work of Donaldson \cite{donaldson-sskgahd}, Mabuchi \cite{mabuchi-ssgockm1} and Semmes \cite{semmes-cmaasm} for a description of this symmetric space structure). Finding a zero of the moment map corresponds to finding a constant scalar curvature metric in the given K\"ahler class. In this context, the Kempf--Ness function is Mabuchi's K-energy and the gradient flow is Calabi flow.

On the other hand, in \cite{donaldson-scape1} Donaldson showed how to fit balanced metrics into a finite dimensional moment-map picture. (This picture has been subsequently studied in \cite{phong-sturm-scmmatdp,wang-mmfiasopm}.)  If $L \to X$ is very ample, then every basis of $H^0(L)$ defines an embedding $X \subset \C\P^N$. We consider the space $Z \cong  \GL(N+1)$ of all bases. There is a K\"ahler structure on $Z$, whose definition involves the fact that each point gives an embedding of $X$, and $\U(N+1)$ acts isometrically with a moment map which is essentially $-i\bar\mu_0$. So finding a zero of the moment map corresponds to finding a balanced embedding. This time, the Kempf--Ness function is a normalised version of what is called the ``$F^0$-functional'' by some authors (it is the function denoted $\tilde Z$ in \cite{donaldson-scape2}). The gradient flow on the Bergman space $\mathcal B = \GL/\U$ is balancing flow. 

Taking successively higher powers $L^k$ of $L$ gives a sequence of moment map problems on successively larger Bergman spaces $\mathcal B_k$, each of which lives inside $\mathcal H$. Put loosely, Donaldson's Theorems \ref{Donaldson 1} and \ref{Donaldson 2} say that the zeros of the finite dimensional moment maps in $\mathcal B_k$ converge in $\mathcal H$ to a zero of the infinite dimensional moment map. Theorems \ref{main result 1} and \ref{main result 2} say that provided we choose the finite dimensional gradient flows to start at a appropriate points then they converge to the infinite dimensional gradient flow. 

In fact, the only aspect of this picture that we use directly in the proofs is that balancing flow is distance decreasing on $\mathcal B$, which follows from the fact that it is the downward gradient flow of a geodesically convex function. This was discovered prior to the moment map interpretation by Paul \cite{paul-gaocms} and Zhang \cite{zhang-harossv}. We remark in passing that Calabi--Chen \cite{calabi-chen-tsokm2} have proved that Calabi flow on $\mathcal H$ is distance decreasing using the symmetric space metric of Donaldson--Mabuchi--Semmes. This is strongly suggested by the standard moment map picture, but doesn't follow directly because $\mathcal H$ is infinite-dimensional. 

\subsection{Additional Context}

Calabi suggested in \cite{calabi-ekm} that, when one exists, a constant scalar curvature K\"ahler metric should be considered a ``canonical'' representative of a K\"ahler class. Since this suggestion, such metrics have been the focus of much work. Additional motivation is provided by the conjectural equivalence between the existence of a K\"ahler metric of constant scalar curvature representing $c_1(L)$ and the stability, in a certain sense, of the underlying polarisation $L \to X$. This began with a suggestion of Yau \cite{yau-opig} which was refined by Tian \cite{tian-occfcswpfcc,tian-kemwpsc} and Donaldson \cite{donaldson-scasotv}. 

Calabi flow, meanwhile, has received less attention. This is no doubt due to the fact that, as it is a fourth-order fully-nonlinear parabolic PDE, there are few standard analytic techniques which apply directly. A start is made in the foundational article by Chen--He \cite{chen-he-otcf} which includes a proof of short-time existence and also shows that when a constant scalar curvature metric $\omega$ exists and the Calabi flow starts sufficiently close to $\omega$ then the flow exists for all time and converges to $\omega$. There are also some long time existence results in which a priori existence of a constant scalar curvature metric is replaced by a ``small energy'' assumption. For example, Tosatti--Weinkove \cite{tosatti-weinkove-tcfwsie} show that, assuming $c_1(X)=0$, if the Calabi flow starts at a metric with sufficiently small Calabi energy, the flow exists for all time and converges to a constant scalar curvature metric. Chen--He \cite{chen-he-tcfotfs} have proved a similar result for Fano toric surfaces.

Balanced metrics have been written about several times since their introduction. Independently, Luo \cite{luo-gcfgmsopm} and Zhang \cite{zhang-harossv} have proved that an embedding can be balanced if and only if it is stable in the sense of GIT, giving the projective analogue of the conjectural relationship between constant scalar curvature metrics and stability mentioned above. The restriction on $\Aut(X, L)$ in Donaldson's Theorem \ref{Donaldson 2} has been relaxed by Mabuchi \cite{mabuchi-soekm,mabuchi-aetatthkcfm1}, whilst the picture in \cite{donaldson-scape1} has been related to the Deligne pairing by Phong--Sturm in \cite{phong-sturm-sefakem,phong-sturm-scmmatdp}, an approach which also leads to a sharpening of some estimates. 

\subsection{Overview of proofs}

\subsubsection{Bergman asymptotics}

The key technical result which underpins Theorems \ref{Tian}, \ref{Donaldson 1} and \ref{Donaldson 2} concerns the \emph{Bergman function}. Given a K\"ahler metric $\omega \in c_1(L)$, let $h$ be a Hermitian metric in $L$ with curvature $2\pi i \omega$. Let $s_\alpha$ be a basis of $H^0(L^k)$ which is orthornomal with respect to the $L^2$-inner-product determined by $h$ and $\omega$. The Bergman function $\rho_k(\omega) \colon X \to \R$ is defined by
$$
\rho_k(\omega) = \sum_\alpha |s_\alpha|^2.
$$ 
where $|\cdot |$ denotes the pointwise norm on sections using $h$. 

The central result concerns the asymptotics of $\rho_k$ and is due  to the work of Catlin \cite{catlin-tbkaatot}, Lu \cite{lu-otlototaeptyz}, Tian \cite{tian-oasopkmoam} and Zelditch \cite{zelditch-skaatot}. We state it as it appears  in \cite{donaldson-scape1} (see  Proposition 6 there and the discussion afterwards). 

\begin{theorem}\label{asymptotics of Bergman}
\mbox{ }
\begin{enumerate}
\item
For fixed $\omega$ there is an asymptotic expansion as $k \to \infty$,
$$
\rho_k(\omega)
=
A_0(\omega)k^n + A_1(\omega) k^{n-1} + \cdots,
$$
where $n = \dim X$ and where the $A_i(\omega)$ are smooth functions on $X$ which are polynomials in the curvature of $\omega$ and its covariant derivatives.  
\item
In particular, 
$$
A_0(\omega) = 1, \quad A_1(\omega) = \frac{1}{2\pi} S(\omega).
$$
\item
The expansion holds in $C^\infty$ in that for any $r, M >0$, 
$$
\left\|
\rho_k(\omega) - \sum_{i=0}^M A_i(\omega)k^{n-i}
\right\|_{C^r(X)}
\leq
K_{r, M, \omega} k^{n-M -1}
$$
for some constants $K_{r,M,\omega}$. Moreover the expansion is uniform in that for any $r$ and $M$ there is an integer $s$ such that if $\omega$ runs over a set of metrics which are bounded in $C^s$, and with $\omega$ bounded below, the constants $K_{r,M,\omega}$ are bounded by some $K_{r,M}$ independent of $\omega$.
\end{enumerate}
\end{theorem}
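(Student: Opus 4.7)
The plan is to use the peak section approach pioneered by Tian. Since $\rho_k(p)$ is the restriction to the diagonal of the Bergman projection kernel $B_k(x,y)$, which is independent of the orthonormal basis chosen, it suffices to compute $B_k(p,p)$ at a fixed point $p\in X$ using any convenient basis built around $p$. I would work in K\"ahler normal coordinates $(z^1,\dots,z^n)$ centred at $p$ together with a holomorphic frame $e$ of $L$ in which $h(e,e)=e^{-\phi}$, with $\phi$ having Taylor expansion $|z|^2 + O(|z|^4)$; the quartic and higher terms encode the curvature of $\omega$ at $p$ and all its covariant derivatives.

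The first step is to write down the Bergman kernel of the flat model $(\C^n, e^{-k|z|^2})$, which is explicitly $k^n\pi^{-n}e^{kz\cdot\bar w}$. Cutting this off with a bump function supported in a ball of fixed size about $p$ produces approximate peak sections concentrated on scale $1/\sqrt k$. These are not holomorphic on $X$, so the second step is to correct them using H\"ormander's $L^2$-estimate for $\bar\partial$ on the positively curved line bundle $L^k$: the positivity of curvature and the small support of $\bar\partial(\text{cutoff})$ combine to give a correction which is $O(e^{-ck})$ in $L^2$, hence exponentially smaller than any polynomial remainder. Gram--Schmidt on the resulting sections then yields a basis whose non-peak members contribute negligibly to $\rho_k(p)$.

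The third step is to expand $\phi$ to higher order and evaluate the resulting Gaussian integrals on $\C^n$. This produces a formal expansion $\rho_k(p)\sim\sum_i A_i(\omega)(p)\,k^{n-i}$ in which each $A_i$ is a universal polynomial in the curvature of $\omega$ and its covariant derivatives at $p$. A direct low-order calculation using only the quadratic and quartic parts of $\phi$ gives $A_0=1$ and $A_1=S(\omega)/(2\pi)$; the precise normalisation of $A_1$ can be double-checked by integrating the expansion against $\omega^n/n!$ and comparing with the Riemann--Roch formula for $\dim H^0(L^k)$. The $C^r$-bound on the remainder follows by combining the H\"ormander estimate with elliptic regularity for the Kodaira Laplacian on $L^k$ (whose ellipticity constants depend only on a finite-order jet of $\omega$) and Sobolev embedding; the integer $s$ appearing in the statement is the Sobolev order required to close this chain.

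The main obstacle is precisely the uniformity claim in part (3). Carrying out the pointwise asymptotics at a single $p$ for a fixed $\omega$ is essentially a Gaussian integral computation, but controlling all $C^r$-norms of the remainder simultaneously, uniformly over a $C^s$-bounded family of K\"ahler metrics with a positive lower bound, demands careful tracking of how the H\"ormander constants and the elliptic regularity constants depend on $\phi$. If this bookkeeping becomes unwieldy, a cleaner route is to lift to the unit circle bundle in $L^*$, identify $\rho_k$ with the diagonal of the $k$-th Fourier mode of the Szeg\H o kernel, and invoke the Boutet de Monvel--Sj\"ostrand microlocal parametrix as in Zelditch's proof, which packages both the expansion and its uniformity in one step because the parametrix depends smoothly on the underlying CR data.
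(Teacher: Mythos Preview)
The paper does not prove this theorem; it is quoted from the literature (attributed to Catlin, Lu, Tian and Zelditch, and stated in the form given by Donaldson), so there is no ``paper's own proof'' to compare against. Your sketch is a fair summary of how those references actually proceed: the peak-section/H\"ormander approach you outline is essentially Tian's and Lu's method for obtaining the expansion and the leading coefficients, while the alternative you mention at the end---lifting to the circle bundle and applying the Boutet de Monvel--Sj\"ostrand parametrix for the Szeg\H{o} kernel---is exactly Zelditch's argument, which is the cleanest route to the full $C^\infty$ expansion with uniform remainder estimates.

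One comment on emphasis: you correctly flag the uniformity in part (3) as the delicate point, and you are right that the peak-section approach makes this bookkeeping awkward. In practice the uniformity (which is what the present paper actually needs, repeatedly) is most transparently obtained either via the microlocal route you mention or via the heat-kernel/rescaling method of Dai--Liu--Ma and Ma--Marinescu, where the dependence of all constants on finitely many $C^s$-norms of the metric data is built into the construction. If you were to write this up in full, I would recommend committing to one of those two approaches for part (3) rather than trying to push the H\"ormander constants through by hand.
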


This expansion essentially proves Theorem \ref{Tian}, since $\rho_k$ relates the original metric $\omega$ to the projective approximation $\omega_k$:
$$
\omega = \omega_k + \frac{i}{2k} \delb\delb \log \rho_k.
$$
Tian's Theorem follows from the fact that the leading term in the expansion of $\rho_k$ is constant.

To see the link with balanced embeddings, note that the embedding $\iota_k$ is balanced if and only if $\rho_k$ is constant.  The fact that the second term in the expansion of $\rho_k$ is the scalar curvature of $\omega$ is at the root of the relationship between the asymptotics of balanced embeddings and constant scalar curvature metrics described by Donaldson's Theorems \ref{Donaldson 1} and \ref{Donaldson 2}.

The asymptotics of the Bergman kernel will also be critical in the proofs of Theorems \ref{main result 1} and \ref{main result 2}, but of equal  importance is another asymptotic result, due to Liu--Ma \cite{liu-ma-arosnricdg}. In fact, for our application a slight strengthening of this result is desirable. Profs.\ Liu and Ma were kind enough to provide a proof of this improvement and this appears in the appendix to this article. Liu--Ma's theorem concerns a sequence of integral operators $Q_k$, introduced by Donaldson in \cite{donaldson-snricdg} and defined as follows. Let $B_k(p,q)$ denote the Bergman kernel of $L^k$; in other words, if $s^*$ denotes the section of $(\bar L^k)^*$ which is metric-dual to a section $s$ of $L^k$, then $B_k$ is the section of $L^k \otimes (\bar L^k)^* \to X \times X$ given by
$$
B_k(p,q) = \sum_{\alpha} s_\alpha (p) \otimes s_\alpha^*(q).
$$
(where $s_\alpha$ is an $L^2$-orthonormal basis of holomorphic sections as before). Now define a sequence of functions $K_k\colon X \times X \to \R$ by 
$$
K_k(p,q)
=
\frac{1}{k^n}|B_k(p,q)|^2
=
\frac{1}{k^n}
\sum_{\alpha, \beta}
( s_\alpha, s_\beta)(p) (s_\beta, s_\alpha)(q).
$$
These functions are the kernels for a sequence of integral operators acting on $C^\infty(X)$ defined by
$$
(Q_kf)(p) = \int_X K_k(p,q)f(q) \frac{\omega^n(q)}{n!}.
$$

Liu--Ma's Theorem (following a suggestion of Donaldson \cite{donaldson-snricdg}) relates the asymptotics of the operators $Q_k$ to the heat kernel $\exp(-s\Delta)$ of $(X, \omega)$. 

\begin{theorem}[Liu--Ma \cite{liu-ma-arosnricdg}, see also the appendix]\label{Liu-Ma}
For any choice of positive integer $r$, there exists a constant $C$ such that for all sufficiently large integers $k$ and any $f \in C^\infty(X)$, 
$$
\left\| 
\left(\frac{\Delta}{k}\right)^r\left\{
Q_k(f) 
- 
\exp\left(-\frac{\Delta}{4\pi k}\right)
f
\right\}
\right\|_{L^2}
\leq \frac{C}{k}\left\| f\right\|_{L^2},
$$
where the norms are taken with respect to $\omega$. Moreover, the estimate is uniform in the sense that there is an integer $s$ such that the constant $C$ can be chosen independently of $\omega$ provided $\omega$ varies over a set of metrics which is bounded in $C^s$ and with $\omega$ bounded below.
\end{theorem}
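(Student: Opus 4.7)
The plan is to compare the two operators by analyzing their integral kernels in $\sqrt{k}$-scaled normal coordinates, and then to handle the factor $(\Delta/k)^r$ by moving it onto the kernel via self-adjointness.

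First, I would set up the comparison of kernels. The operator $Q_k$ has kernel $K_k(p,q) = k^{-n}|B_k(p,q)|^2$, while $e^{-s\Delta}$ has the heat kernel $h_s(p,q)$. At time $s = 1/(4\pi k)$, both kernels concentrate at scale $1/\sqrt{k}$ about the diagonal and decay exponentially in $\sqrt{k}\,d(p,q)$ away from it. Using the off-diagonal decay of $B_k$ (such as the Christ/Lindholm/Ma--Marinescu estimate $|B_k(p,q)| \leq C k^n e^{-c\sqrt{k}\,d(p,q)}$) together with the analogous Gaussian decay of $h_s$, one localizes to a $k^{-1/2}\log k$-neighborhood of the diagonal, absorbing the rest into a rapidly decaying error.

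Second, I would invoke the off-diagonal asymptotic expansion of $B_k$ due to Dai--Liu--Ma: in $\sqrt{k}$-scaled normal coordinates $(u,v)$ about a point $p_0$,
$$k^{-n}B_k\bigl(\exp_{p_0}(u/\sqrt{k}),\exp_{p_0}(v/\sqrt{k})\bigr) = \mathcal P(u,v) + \sum_{j \geq 1} k^{-j/2} F_j(u,v),$$
with $\mathcal P$ the Bargmann--Fock model kernel (a Gaussian with an imaginary phase) and the $F_j$ polynomials times $\mathcal P$ depending on the curvature of $\omega$ and its derivatives. Taking modulus squared, the phase cancels and $K_k$ acquires a real Gaussian-times-polynomial expansion. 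On the heat side, the short-time Minakshisundaram--Pleijel expansion at $s = 1/(4\pi k)$ yields, in the same scaled variables, $h_s \sim k^n e^{-\pi|u-v|^2}(1 + O(1/k))$. The leading Gaussians coincide (this is Donaldson's observation motivating the whole picture), and by parity the odd powers $k^{-1/2}, k^{-3/2}, \ldots$ on the Bergman side integrate to zero against the Taylor expansion of $f$, leaving a genuine $O(k^{-1})$ mismatch between the two kernels at the level of operator action.

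Third, to handle the factor $(\Delta/k)^r$, I would move it onto the kernel by self-adjointness, writing
$$\left\|(\Delta/k)^r \bigl(Q_k f - e^{-\Delta/(4\pi k)}f\bigr)\right\|_{L^2} = \sup_{\|g\|_{L^2}=1} \left| \bigl\langle g,\, (\Delta/k)^r (Q_k - e^{-\Delta/(4\pi k)}) f\bigr\rangle \right|,$$
and differentiating the kernel $K_k - h_{1/(4\pi k)}$ in the $p$-variable. Because both kernels depend on $p$ through the scaled variable $\sqrt{k}\,p$, each application of $\Delta/k$ acts at unit scale in $u$, so the differentiated kernels retain the polynomial-times-Gaussian form at the same order of accuracy. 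A Schur test on the resulting kernel then yields the $L^2$ operator-norm bound.

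The main obstacle is this last step: showing that $r$-fold differentiation of the difference kernel preserves the $O(1/k)$ bound in a Schur-integrable sense, and in particular that the cancellation of the odd $k^{-j/2}$ corrections persists through arbitrarily many derivatives. This forces us to carry the Dai--Liu--Ma expansion to sufficiently high order (depending on $r$) so that differentiation does not degrade the error below $1/k$, and to verify the matching of the remaining coefficients against the heat expansion term by term. The uniformity over $\omega$ bounded in $C^s$ follows from the corresponding uniformity in the Bergman and heat kernel expansions, provided $s$ is chosen large enough to control all the curvature-dependent coefficients that appear.
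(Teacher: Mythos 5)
Your proposal follows essentially the same route as the paper's appendix proof of Theorem \ref{nt1}: the near-diagonal expansion of $|B_k|^2$ in $\sqrt{k}$-scaled normal coordinates, comparison with the short-time heat-kernel expansion, and application of $\Delta^r$ directly to the kernel in the output variable, with the curved Laplacian replaced by the flat $\Delta_0$ up to corrections that are $O(k^{-1})$ after rescaling. The obstacle you flag at the end is resolved there exactly along the lines you suggest; the one device you leave implicit is the identity $\Delta^r e^{-u\Delta} = (-1)^r\partial_u^r e^{-u\Delta}$ together with its flat analogue for the Gaussian, which reduces the term-by-term matching to the $u$-differentiated Minakshisundaram--Pleijel expansion (and note that the expansion of $|B_k|^2$ in fact starts its corrections at order $k^{-1}$, so no parity cancellation of a $k^{-1/2}$ term is actually needed).
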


Liu-Ma's original article \cite{liu-ma-arosnricdg} deals with the cases $r=0$ and $r=1$; the remaining cases are considered in the appendix to this article, which also proves the following $C^m$ estimate:

\begin{theorem}[See appendix]\label{converge in Cinfty}
For any choice of positive integer $r$, there exists a constant $C$ such that for all sufficiently large $k$ and for any $f \in C^\infty(X)$, 
$$
\|Q_k(f) - f\|_{C^m} \leq \frac{C}{k} \|f\|_{C^m},
$$
where the norms are taken with respect to $\omega$. Moreover, the estimate is uniform in the sense that there is an integer $s$ such that the constant $C$ can be chosen independently of $\omega$ provided $\omega$ varies over a  set of metrics which is bounded in $C^s$ and with $\omega$ bounded below.
\end{theorem}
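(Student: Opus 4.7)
The plan is to combine the Liu--Ma estimate of Theorem \ref{Liu-Ma} with standard heat-semigroup and elliptic-regularity techniques, upgrading the $L^2$-Sobolev bounds of that theorem to $C^m$-bounds. I would start from the decomposition
\[
Q_k f - f
=
\bigl(Q_k f - e^{-\Delta/(4\pi k)}f\bigr)
+
\bigl(e^{-\Delta/(4\pi k)} f - f\bigr)
\]
and bound the two summands separately.

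For the first summand $g_k := Q_k f - e^{-\Delta/(4\pi k)}f$, I would apply Theorem \ref{Liu-Ma} twice: at $r=0$, giving $\|g_k\|_{L^2} \le (C/k)\|f\|_{L^2}$, and at $r$ chosen large enough that the Sobolev embedding $H^{2r}\hookrightarrow C^m$ holds (i.e.\ $2r > m + n/2$). Combining via the standard elliptic estimate
\[
\|g\|_{H^{2r}} \le C\bigl(\|\Delta^r g\|_{L^2} + \|g\|_{L^2}\bigr)
\]
yields a Sobolev bound on $g_k$, which Sobolev embedding converts to a $C^m$ bound. The naive argument produces a positive power of $k$; to remove it, I would run the same argument with $f$ replaced by $Df$ for each differential operator $D$ of order at most $m$, commuting $D$ past $Q_k$ and $e^{-\Delta/(4\pi k)}$ and controlling the resulting commutators using the Gaussian off-diagonal concentration of the Bergman kernel $B_k(p,q)$ at scale $1/\sqrt{k}$.

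For the second summand, I would use the heat-semigroup identity
\[
e^{-s\Delta}f - f
=
-s\int_0^1 \Delta\, e^{-us\Delta} f\, du
\]
with $s = 1/(4\pi k)$, together with uniform bounds for $e^{-u\Delta}$ as a bounded operator on appropriate function spaces, to obtain an estimate of order $1/k$ in $C^m$.

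The main technical obstacle is the commutator analysis needed to avoid a positive power of $k$ after Sobolev embedding; this requires pointwise control of $B_k(p,q)$ and its derivatives away from the diagonal, going beyond the diagonal asymptotics recorded in Theorem \ref{asymptotics of Bergman}. Uniformity over $C^s$-bounded families of metrics then follows because every ingredient---Liu--Ma's estimate itself, the elliptic and Sobolev constants, and the heat-semigroup bounds---depends only on finitely many derivatives of $\omega$.
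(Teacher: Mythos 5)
Your decomposition through the heat semigroup, followed by $L^2$-Sobolev theory, cannot deliver the estimate as stated, and the obstacle is not merely the commutator analysis you flag. Theorem \ref{Liu-Ma} gives $\|\Delta^r(Q_kf-e^{-\Delta/(4\pi k)}f)\|_{L^2}\leq Ck^{r-1}\|f\|_{L^2}$, so for any $r\geq 1$ the higher Sobolev norms of $g_k$ do \emph{not} decay, and no interpolation with the $r=0$ case recovers a decaying $C^0$ bound once $2r$ exceeds the real dimension. Your proposed repair --- applying the $r=0$ estimate to $Df$ for $|D|\leq m$ and commuting --- still forces you through Sobolev embedding at the end, which requires $|D|>m+\dim_{\R}X/2$ and therefore yields at best $\|Q_kf-f\|_{C^m}\leq (C/k)\|f\|_{C^{m'}}$ with $m'>m$: a genuine loss of derivatives relative to the claimed $\|f\|_{C^m}$. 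The same loss infects your second summand, since $e^{-s\Delta}f-f=-\int_0^s\Delta e^{-u\Delta}f\,du$ gives $\|e^{-s\Delta}f-f\|_{C^m}\leq Cs\|f\|_{C^{m+2}}$, not $Cs\|f\|_{C^m}$ (the smoothing bound $\|\Delta e^{-u\Delta}\|_{C^m\to C^m}\leq C/u$ is not integrable at $u=0$). Finally, the commutators $[D,Q_k]$ are not small in any norm that helps you without precisely the pointwise off-diagonal control you defer to as a black box.

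That deferred input is in fact the whole proof. The appendix (Theorem \ref{nt2}) never passes through the heat kernel or $L^2$ theory: it uses the off-diagonal expansion of the Bergman kernel in normal coordinates, $p^{-2n}K_{\omega,p,x_0}(Z,Z')=\bigl(1+\sum_{r\geq 2}p^{-r/2}J_r'(\sqrt{p}Z,\sqrt{p}Z')\bigr)e^{-\pi p|Z-Z'|^2}+O(p^{-(k+1)/2})$, with the error measured in $C^m$ \emph{in the base point} $x_0$, and then integrates the Gaussian against $f_{x_0}(Z')$ in the style of Berline--Getzler--Vergne to get $p^{-n}K_{\omega,p}f=f+O(p^{-1})\|f\|_{C^m}$ directly in $C^m$, with no derivative loss because every $Z'$-derivative landing on the Gaussian is paid for by a factor $p^{-1/2}$. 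If you want to salvage your route you must either prove and use that kernel expansion with derivatives (at which point the heat-kernel detour is superfluous), or be content with a weaker statement $\|Q_kf-f\|_{C^m}\leq (C/k)\|f\|_{C^{m'}}$, which is not the theorem claimed.
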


Just as the Bergman function $\rho_k$ appears when comparing a K\"ahler metric $\omega$ to its algebraic approximations $\omega_k$, the operators $Q_k$ appear when one relates infinitesimal deformations of the metric $\omega$ to the corresponding deformations of the approximations $\omega_k$. Since the Calabi flow deforms $\omega$, it is clear that this will be of interest to us. 

To see how the operators $Q_k$ arise, let $h(t) = e^{\phi(t)}h$ denote a path of positively curved Hermitian metrics in $L$, giving a path of K\"ahler forms $\omega(t) \in c_1(L)$. The infinitesimal change in the $L^2$-inner-product on $H^0(L^k)$ corresponds to the Hermitian matrix $A$ whose elements are 
$$
A_{\alpha \beta}
=
\int_X 
\left( k \dot \phi + \Delta \dot\phi \right)
(s_\alpha, s_\beta)\,
\frac{\omega^n}{n!}.
$$
The term $k\dot \phi$ is due to the change in the fibrewise metric, whilst $\Delta \dot\phi$ is due to the change in volume form.
The infinitesimal change in $\omega_k$ corresponding to $A$ is given by the potential 
\begin{eqnarray*}
\frac{1}{k}\tr (A\mu)
	&=&
		\int_X \left(\dot \phi + k^{-1}\Delta \dot \phi \right)(p)
			\frac{
					(s_\alpha, s_\beta)(p)(s_\beta, s_\alpha)(q)}
					{\rho_k(q)}
					\,\frac{\omega^n(p)}{n!},\\
	&=& 
		\int_X \left(\dot \phi + k^{-1}\Delta \dot \phi \right)(p)
			\frac{k^n}{\rho_k(q)}K_k(p,q)\,
				\frac{\omega^n(p)}{n!},\\
	&=&
		\left(
		Q_k\big(\dot\phi\big) 
		+
		k^{-1}Q_k(\Delta \dot \phi) 
		\right)
		\left(
		1+O(k^{-1})
		\right). 
\end{eqnarray*}
(The fact that the potential is $\tr (A\mu)$ is essentially a restatement of the fact that $-i\mu$ is a moment map for the action of  $\U(N+1)$ on $\C\P^N$, whilst the factor of $k^{-1}$ is due to the rescaling needed to remain in a fixed K\"ahler class.) It follows from Liu--Ma's results that $\frac{1}{k}\tr(A\mu) \to \dot \phi$ in $C^\infty$ and hence that the convergence of algebraic approximations $\omega_k(t)$ to $\omega(t)$ is also $C^1$ in the $t$ direction.

We can describe this calculation in the notation of \cite{donaldson-scape2}. Recall that $\mathcal H$ denotes the space of positively curved Hermitian metrics in $L$, whilst $\mathcal B_k$ denotes the space of projective Hermitian metrics in $L^k$, i.e., those obtained by pulling back the Fubini--Study metric from $\mathcal O(1) \to \C\P^{N_k}$ using embeddings via $H^0(L^k)$. Given  $h \in \mathcal H$ in $L$, using an $L^2(h)$-orthonormal basis of $H^0(L^k)$ to embed $X$ gives a projection $\mathrm{Hilb}_k\colon \mathcal H \to \mathcal B_k$; meanwhile, taking the $k^{\mathrm{th}}$ root gives an inclusion $\mathrm{FS}_k \colon\mathcal B_k \to \mathcal H$. Composing gives a map $\Phi_k  = \mathrm{FS}_k \circ \mathrm{Hilb}_k \colon \mathcal H \to \mathcal H$ and this calculation shows that the derivative of $\Phi_k$ at a given point $h$ satisfies $(\diff \Phi_k)_h = Q_k + O(k^{-1})$. So, whilst the expansion of $\rho_k$ tells us that $\Phi_k(h)/k^n \to h$, Liu--Ma's asymptotics give that $(\diff \Phi_k)_h(\dot\phi) \to \dot \phi$.

\subsubsection{Outline of arguments}

Our over-all approach follows the general scheme of Donaldson's proofs of Theorems \ref{Donaldson 1} and \ref{Donaldson 2}. We begin in \S\ref{BF converges} by proving Theorem \ref{main result 1}. Just as Donaldson's Theorem \ref{Donaldson 1} is implied more-or-less directly by the uniformity of the asymptotic expansion of the Bergman kernel, so our result will follow easily from this combined with the uniformity in Liu-Ma's Theorem. 

We then move on to the proof of Theorem \ref{main result 2}. As with Theorem \ref{Donaldson 2}, this part requires substantially more effort. It is shown in \S\ref{approximating BF} that the standard sequence of projective approximations to Calabi flow gives an $O(k^{-1})$ approximation to balancing flow. This is analogous to the fact that the standard sequence of projective approximations to a constant scalar curvature metric are themselves close to being balanced. Just as in the case of balanced metrics, however, $O(k^{-1})$ is not strong enough; for later arguments it becomes important to improve this to beat any power of $k^{-1}$. 

Donaldson solved this problem by considering instead a perturbation of the constant scalar curvature metric $\omega$: 
$$
\omega + i \delb\del \sum_{j=1}^m k^{-j}\eta_j
$$
where the potentials $\eta_j$ solve partial differential equations of the form $L\eta = f$ for a certain linear elliptic operator $L$. We apply the same idea to Calabi flow, using time-dependent potentials $\eta_j(t)$ which are required to solve parabolic equations $\dot \eta + L\eta = g$ associated to the same operator $L$.

After this perturbation we have, for each $k$, a path of projective metrics $\omega'_k(t)$ which on the one hand converge to Calabi flow and on the other hand are $O(k^{-m})$ from the balancing flow $\omega_k(t)$. Here, we are working for each $k$ in the Bergman space $\mathcal B_k = \GL(N_k+1)/\U(N_k+1)$ and $O(k^{-m})$ means with respect to the Riemannian distance function $d_k$ of the symmetric space metric. The remainder of the proof is concerned with uniformly controlling the $C^r$ norm on metric tensors by the Riemannian distance $d_k$. This involves two parts: firstly, some analytic estimates proved in \S\ref{analytic estimates} reduce the problem to controlling $\bar\mu$; secondly, some estimates in projective geometry proved in \S\ref{projective estimates} show how to control $\bar\mu$ by $d_k$. \S\ref{completing the proof} puts all the pieces together and completes the proof of Theorem \ref{main result 2}.

\subsection{Acknowledgements}

I am very grateful to G\'abor Sz\'ekelyhidi and Xiuxiong Chen for many helpful discussions during the course of this work. I am also indebted to Kefeng Liu and Xiaonan Ma for providing me with the version of Theorem \ref{Liu-Ma} used here and kindly agreeing to write the necessary additional arguments in an appendix to this article. I would also like to thank Julien Keller, Yanir Rubinstein and Richard Thomas for conversations about Bergman asymptotics and balanced embeddings.

\section{When the balancing flows converge}\label{BF converges}

In this section we will prove Theorem \ref{main result 1}. We begin by describing balancing flow in terms of K\"ahler potentials. Given an embedding $\iota \colon X \to \C\P^N$ and a Hermitian matrix $A \in i\un(N+1)$, we view $A$ as a vector field $\xi_A$ on $\C\P^N$ and consequently as an infinitesimal perturbation of $\iota$. The corresponding infinitesimal change in $\iota^*\omegaFS$ is given by the potential  $\tr(A\mu)$ restricted to $X$ via $\iota$. (It suffices to prove this for $\C\P^N$ itself, where it follows from the fact that $-i\mu \colon \C\P^N \to \un(N+1)$ is a moment map for the $\U(N+1)$-action.) Accordingly, the potential corresponding to balancing flow is the \emph{balancing potential} $\beta = -\tr(\bar\mu_0 \mu)$. 

To obtain the correct asymptotics when considering embeddings via higher and higher powers $L^k$ it is necessary to rescale the balancing flow to be generated by $-2\pi k^2\bar\mu_0$. Since the restriction of the Fubini--Study metric is also rescaled to remain in the fixed class $c_1(L)$, the corresponding balancing potential is $\beta_k = -2\pi k \tr (\bar\mu_0 \mu)$. 

Theorem \ref{main result 1} will follow directly from the next result.

\begin{theorem}\label{limit of balancing potentials}
Let $h_k\in \mathcal B_k$ be a sequence of Bergman metrics whose rescaled curvatures $\omega_k \in c_1(L)$ converge in $C^\infty$ to a metric $\omega$. Then the balancing potentials converge in $C^\infty$ to the potential generating Calabi flow at $\omega$:
$$
\beta_k(\omega_k)\to S(\omega) - \bar S.
$$
(Here $\bar S$ is the mean value of the scalar curavture $S$.)
\end{theorem}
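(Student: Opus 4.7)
The plan is to reduce the problem to a computation with the Bergman function $\rho_k$ and the Liu--Ma operator $Q_k$. Writing $\bar\mu_0 = \bar\mu - \tfrac{\tr\bar\mu}{N_k+1}\,I$ and using $\tr\mu \equiv 1$, we have
\[
\beta_k = -2\pi k\Bigl[\,\tr(\bar\mu\,\mu) - \tfrac{\tr\bar\mu}{N_k+1}\,\Bigr],
\]
so the task is to expand each of these two scalars to order $O(k^{-2})$. To work concretely I would represent the embedding of $X$ using an $L^2(h_k,\omega_k)$-orthonormal basis $\{s_\alpha\}$ of $H^0(L^k)$. This is not quite the ``defining basis'' of $h_k$ (the one whose pull-back equals $h_k$): the $L^2$-orthonormal embedding has pull-back $h_k/\rho_k$, differing from $h_k$ by $O(k^{-2})$ in $C^\infty$. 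However this substitution only perturbs $\beta_k$ by $O(k^{-1})$, which is absorbed into the final error.

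In this basis $\mu_{\alpha\beta}(p) = (s_\alpha,s_\beta)(p)/\rho_k(p)$ and $\omegaFS^n/n!|_X = k^n\,\omega_k^n/n!$, so unwinding the definitions gives
\[
\tr(\bar\mu\,\mu)(p) = \frac{k^{2n}}{\rho_k(p)}\,Q_k\bigl(1/\rho_k\bigr)(p),\qquad \frac{\tr\bar\mu}{N_k+1} = \frac{k^n V}{N_k+1},
\]
where $V = \int_X \omega^n/n!$. The Tian--Yau--Zelditch expansion (Theorem \ref{asymptotics of Bergman}) supplies $\rho_k/k^n = 1 + S(\omega_k)/(2\pi k) + O(k^{-2})$ in $C^\infty$, and the asymptotic Riemann--Roch formula for $N_k+1$ converts the second scalar into $\tr\bar\mu/(N_k+1) = 1 - \bar S/(2\pi k) + O(k^{-2})$. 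For the first scalar the crucial input is the exact identity
\[
Q_k(1)(p) = \frac{1}{k^n}\int_X |B_k(p,q)|^2\,\frac{\omega_k^n(q)}{n!} = \frac{B_k(p,p)}{k^n} = \frac{\rho_k(p)}{k^n},
\]
coming from the reproducing property of the Bergman kernel. Combined with the $C^m$-estimate $\|Q_k(f) - f\|_{C^m} = O(\|f\|_{C^m}/k)$ of Theorem \ref{converge in Cinfty} applied to $1/\rho_k - k^{-n}$, this pins down $k^{2n}Q_k(1/\rho_k)/\rho_k = 1 - S(\omega_k)/(2\pi k) + O(k^{-2})$. Assembling,
\[
\beta_k = -2\pi k\Bigl[-\tfrac{S(\omega_k) - \bar S}{2\pi k} + O(k^{-2})\Bigr] = S(\omega_k) - \bar S + O(k^{-1}),
\]
and the uniform-in-$\omega$ versions of Theorems \ref{asymptotics of Bergman} and \ref{converge in Cinfty}, together with $\omega_k \to \omega$ in $C^\infty$, promote this to $C^\infty$-convergence $\beta_k(\omega_k) \to S(\omega) - \bar S$.

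The delicate step is extracting this $O(k^{-1})$ correction: Liu--Ma's heat-kernel estimate only controls $Q_k - \exp(-\Delta/(4\pi k))$ to order $k^{-1}\|f\|_{L^2}$, which is precisely the order of the term we are trying to identify. The way around this is the exact identity $Q_k(1) = \rho_k/k^n$, for the Tian--Yau--Zelditch expansion of $\rho_k$ at order $k^{n-1}$ supplies exactly the $S/(2\pi k)$ information that Liu--Ma alone cannot resolve. A secondary technicality I would verify at the end is the assertion in the first paragraph that switching between the $L^2$-orthonormal and defining bases of $h_k$ perturbs $\beta_k$ by at most $O(k^{-1})$; this follows from the observation that $\beta_k$ depends on the K\"ahler form with derivative of order $k$ while the two embeddings correspond to Bergman metrics lying $O(k^{-2})$ apart in $c_1(L)$.
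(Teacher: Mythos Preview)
Your argument is correct and follows the paper's overall strategy: write $\beta_k$ in terms of $\rho_k$ and $Q_k$, expand via Bergman asymptotics, and control the $Q_k$ errors. Your execution differs in one worthwhile way. The paper reaches $\beta_k=(S(\omega_k)-\bar S)\bigl(1+O(k^{-1})+Q_k(O(k^{-1}))\bigr)$ and then devotes most of the proof to showing $Q_k(O(k^{-1}))\to 0$ in $C^\infty$, splitting the argument of $Q_k$ into a polynomial in $k^{-1}$ with fixed coefficients (handled by Theorem~\ref{converge in Cinfty}) and a high-order remainder (handled via the $L^2_{2r}$ bound of Theorem~\ref{Liu-Ma}). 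You bypass this by combining the exact identity $Q_k(1)=\rho_k/k^n$ with a single application of Theorem~\ref{converge in Cinfty} to $1/\rho_k-k^{-n}$; this already yields the required $O(k^{-2})$ expansion in every $C^m$ and makes Theorem~\ref{Liu-Ma} unnecessary here. You also flag a subtlety the paper passes over in silence, namely that the $L^2(h_k,\omega_k)$-orthonormal basis defines a Bergman point $\hat h_k$ generically distinct from $h_k$, so the displayed formula is literally $\beta_k(\hat h_k)$; your remark that the resulting discrepancy in $\beta_k$ is $O(k^{-1})$ is correct, though the one-line justification (``derivative of order $k$'' against metrics $O(k^{-2})$ apart) would merit a few more lines to make fully rigorous.
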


\begin{proof}
Let $s_\alpha$ be an orthonormal basis of $H^0(L^k)$ with respect to the $L^2$-inner-product determined by $h_k$ and $\omega_k$. Then the balancing potential is 
$$
\beta_k(\omega_k)(p)
=
2\pi k\int_X\sum\left(
\frac{\delta_{\alpha\beta}}{N_k+1}
-
\frac{(s_\alpha, s_\beta)(q)}{\rho_k(\omega_k)(q)}
\right)
\frac{(s_\beta, s_\alpha)(p)}{\rho_k(\omega_k)(p)}
\,\frac{\omegaFS^n(q)}{n!}.
$$
Here $N_k+1 = \dim H^0(L^k)$ which, by Riemann--Roch and Chern--Weil, is a polynomial with leading terms
$$
N_k+1 = V\left(k^n + \frac{\bar S}{2\pi}k^{n-1} + \cdots \right),
$$
where $V = c_1(L)^n$.

Since $\omega_k$ converges in $C^\infty$, we can apply the uniform asymptotic expansion of the Bergman functions $\rho_k(\omega_k)$ (Theorem \ref{asymptotics of Bergman}) to conclude that $\frac{1}{k}\omegaFS= \omega_k + O(k^{2})$ and, moreover, that
$$
\rho_k(\omega_k) = k^n + \frac{S(\omega_k)}{2\pi} k^{n-1} + O(k^{n-2}).
$$
From here on in the proof, we write $\rho_k = \rho_k(\omega_k)$; similarly  we write the operators appearing in Liu--Ma's Theorem as $Q_k = Q_k(\omega_k)$. 

Using the uniform expansion of the Bergman function, we have
\begin{eqnarray*}
\beta_k(\omega_k)
&=&
\frac{2\pi Vk^{n+1}}{N_k+1}
-
\frac{2\pi k^{n+1}}{\rho_k(p)}
\int_X
K_k(p,q)\left(\frac{k^n}{\rho_k(q)} + O(k^{-2})\right)\,\frac{\omega_k^n(q)}{n!}
\\
&=&
\left(2\pi k -\bar S + O(k^{-1})\right)\\
&&
\quad\quad
-
\left(2\pi k - S(\omega_k) + O(k^{-1})\right) Q_k(1 + O(k^{-1}))\\
&=&
\left(S(\omega_k) - \bar S \right)\left(1 + O(k^{-1}) + Q_k(O(k^{-1}))\right)
\end{eqnarray*}
since, by definition, $Q_k(1) = \rho_k/k^n = 1 + O(k^{-1})$. We will prove this converges to $S - \bar S$ by showing that the error $Q_k(O(k^{-1}))$ converges to zero in $C^\infty$.

To control the term $Q_k(O(k^{-1}))$ we have to be a little careful, since the convergence $Q_k(f) \to f$ is not uniform. This 
can be seen for the heat kernel $\exp(-\Delta/k)$ itself by considering eigenfunctions of the Laplacian with higher and higher eigenvalues. Instead, for $f_k = O(k^{-1})$ note that by the $r=0$ case of Theorem \ref{Liu-Ma},
\begin{eqnarray*}
\| Q_k(f_k))\|_{L^2}
&\leq &
\frac{C}{k} \|f_k\|_{L^2}
+
\left\| \exp\left(-\frac{\Delta}{4\pi k}\right)f_k\right\|_{L^2}\\
&\leq& \left(\frac{C}{k} + 1\right) \| f_k\|
\end{eqnarray*}
where the last line follows because the heat kernel reduces $L^2$-norm. So, indeed, $Q_k(f_k) \to 0$ in $L^2$. (Note we have used the uniformity in Theorem~\ref{Liu-Ma} to ensure that the constant $C$ appearing here from the estimate for the asymptotics of the operators $Q_k(\omega_k)$ can be taken independent of $k$, since $\omega_k \to \omega$ in $C^\infty$.)

For convergence in $L^2_{2r}$, write 
$$
f_k = k^{-1}F_1 + \cdots k^{-r+1}F_{r-1} + \hat f_k
$$
where the $F_j$ are functions independent of $k$ and $\hat f_k = O(k^{-r})$. Now Theorem~\ref{converge in Cinfty} guarantees that $k^{-j}Q_k(F_j) \to 0$ in $C^{\infty}$, whilst Theorem \ref{Liu-Ma} gives
\begin{eqnarray*}
\left\|
\Delta^r \left(Q_k(\hat f_k)\right)
\right\|_{L^2}
&\leq&
Ck^{r-1}\|\hat f_k\|_{L^2}
+
\left\| \exp\left(-\frac{\Delta}{4\pi k} \right) \left(\Delta^r \hat f_k\right)\right\|_{L^2}\\
&\leq&
Ck^{r-1}\|\hat f_k\|_{L^2}
+
\|\Delta^r \hat f_k\|_{L^2}\\
&\leq&
\left(Ck^{r-1} + 1\right)\left \|\hat f_k\right\|_{L^2_{2r}}
\end{eqnarray*}
Since $\hat f_k = O(k^{-r})$, we see that $Q_k(\hat f_k) \to 0$ in $L^2_{2r}$ and, hence, that $Q_k(f_k) \to 0$ in $C^\infty$. 

Recall that
$$
\beta_k(\omega_k) = \left(S(\omega_k) - \bar S\right) \left(1 + O(k^{-1}) + Q_k(O(k^{-1})) \right).
$$
Now combine the fact that $Q_k(O(k^{-1})) \to 0$ in $C^\infty$ with the fact that $S(\omega_k) \to S(\omega)$ in $C^\infty$ to complete the proof.

\end{proof}

We now give the proof of Theorem \ref{main result 1}. Recall that $\omega \in c_1(L)$  is a given K\"ahler form, $\omega_k$ is the sequence of projective metrics in Tian's Theorem \ref{Tian} and $\omega_k(t)$ is the balancing flow starting at $\omega_k$. We assume that for each value of $t\in [0,T]$, $\omega_k(t)$ converges in $C^\infty$ to a metric which we denote $\omega(t)$ and that moreover the convergence is $C^1$ in $t$. So, if we write $\del \omega/\del t = i\delb\del f(t)$ for a function $f(t)$ with $\omega(t)$-mean-value zero, then $\beta_k(\omega_k(t)) \to f(t)$. Now applying Theorem~\ref{limit of balancing potentials} to $\omega_k(t)$ it follows that $f(t)= S(\omega(t)) - \bar S$ and so $\omega(t)$ is a solution to Calabi flow on $[0,T]$ starting at $\omega$.

\section{Using Calabi flow to approximate balancing flow}\label{approximating BF}

We now move on to the proof of Theorem \ref{main result 2}, a task which will take up the remainder of the article. 

\subsection{First order approximation}

We first explain how Calabi flow can be used to approximate the balancing flow metrics $\omega_k(t)$ to $O(k^{-1})$. Recall that $h$ is a Hermitian metric in $L$ with curvature $2\pi i\omega$. Since we are assuming Calabi flow exists, we have a path $h(t)=e^{\phi(t)}h$ of Hermitian metrics with curvatures $\omega(t)$ such that
$$
\dot \phi = S(\omega(t)) - \bar S.
$$
Let $\hat h_k(t) \in \mathcal B_k$ denote the $k^\mathrm{th}$ Bergman point of $h(t)$. In other words, take a basis $s_\alpha$ of $H^0(L^k)$ which is orthonormal with respect to the $L^2$-inner-product determined by $h(t)$ and $\omega(t)$; this defines a projective metric $\hat h_k(t)$ in $L^k$ which is either characterised as the pull back to $L^k$ of the Fubini--Study metric from $\mathcal O(1) \to \C\P^{N_k}$ via the embedding determined by $s_\alpha$ or, equivalently, as the unique metric for which $\sum |s_\alpha|^2=1$. Meanwhile, we denote by $h_k(t)\in\mathcal B_k$ the balancing flow starting at $\hat h_k(0)$. We will estimate the distance in $\mathcal B_k$ between $h_k(t)$ and $\hat h_k(t)$ using the symmetric Riemannian metric. It turns out to be convenient to rescale this by a power of $k$. Denote by $d_k$ the distance function arising from using the metric given by the rescaled Killing form $k^{-(n+2)}\tr A^2$. (It can been shown that these norms converge in a certain sense to the $L^2$-norm on potentials, so this is a natural rescaling to consider.)

Our goal in this subsection is to prove the following result.

\begin{proposition}\label{first approximation}
There is a constant $C$ such that for all $t \in [0,T]$, 
$$
d_k\left(h_k(t), \hat h_k(t) \right) \leq \frac{C}{k}.
$$
\end{proposition}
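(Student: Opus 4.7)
The plan is to exploit the fact that balancing flow on $\mathcal B_k$ is distance-decreasing: because it is the downward gradient flow of the geodesically convex $F^0$-functional (as recalled in the moment-map discussion), any two trajectories converge, and more generally a Gronwall-type argument based on the triangle inequality shows that, for any smooth path $\gamma(t)$ and the balancing flow trajectory $\tilde\gamma(t)$ starting at $\gamma(0)$,
$$
d_k\bigl(\tilde\gamma(t), \gamma(t)\bigr)
\leq
\int_0^t \bigl\|\dot\gamma(s) - V_k(\gamma(s))\bigr\|_{d_k}\, ds,
$$
where $V_k$ is the vector field on $\mathcal B_k$ generating the balancing flow. Applying this with $\tilde\gamma = h_k$ and $\gamma = \hat h_k$ (which do share the initial point $\hat h_k(0)$), the proposition reduces to the pointwise estimate $\bigl\|\dot{\hat h}_k(s) - V_k(\hat h_k(s))\bigr\|_{d_k} = O(k^{-1})$, uniformly for $s\in[0,T]$.

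To establish this estimate I would compare both tangent vectors at $\hat h_k(s)$ through their associated potentials on $X$ under the infinitesimal Fubini--Study map $A\mapsto k^{-1}\tr(A\mu)$. On the one hand, the potential of $V_k(\hat h_k(s))$ is by definition the rescaled balancing potential $\beta_k(\omega_k(s))$, and Theorem \ref{limit of balancing potentials} (applied uniformly along $\omega_k(s)\to\omega(s)$) gives $\beta_k(\omega_k(s)) = S(\omega(s)) - \bar S + O(k^{-1})$ in $C^\infty$. On the other hand, the computation displayed just before Theorem \ref{limit of balancing potentials} identifies the potential of $\dot{\hat h}_k(s)$ as $Q_k(\dot\phi(s)) + k^{-1}Q_k(\Delta\dot\phi(s))$ up to a factor $1+O(k^{-1})$; combining Theorem \ref{converge in Cinfty} with the Calabi flow equation $\dot\phi(s) = S(\omega(s))-\bar S$ shows that this too equals $S(\omega(s))-\bar S + O(k^{-1})$ in $C^\infty$. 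Subtracting, the potential of the tangent vector $\dot{\hat h}_k(s) - V_k(\hat h_k(s))$ is $O(k^{-1})$ in $C^\infty$, in particular in $L^2(\omega_k(s)^n/n!)$.

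The final step is to convert this $L^2$-bound on potentials into a $d_k$-bound on the corresponding Hermitian matrix. Here I would invoke the asymptotic equivalence between the rescaled Killing form $k^{-(n+2)}\tr A^2$ on $i\mathfrak u(N_k+1)$ and the $L^2$-norm on the associated potentials, noted in the definition of $d_k$. Concretely, a short calculation using the Bergman expansion for $\int_X (s_\alpha,s_\beta)(s_\gamma,s_\delta)\,\omega_k^n/n!$ shows that at a Bergman point the two norms agree to within a $1+O(k^{-1})$ factor; feeding in the $O(k^{-1})$ bound on potentials from the previous paragraph yields the pointwise $O(k^{-1})$ bound in $d_k$, which integrates to the claim of the proposition.

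The main obstacle is that every estimate in the argument must be uniform in $s\in[0,T]$. This uniformity is delivered by the corresponding clauses in Theorems \ref{asymptotics of Bergman}, \ref{Liu-Ma} and \ref{converge in Cinfty}: because the Calabi flow $\omega(s)$ exists smoothly on $[0,T]$, the family $\{\omega(s)\}$ is bounded in every $C^r$ norm and uniformly bounded below, so each asymptotic constant can be chosen independently of $s$, and the same applies to the Bergman asymptotics governing the comparison of Killing form with $L^2$-norm. A secondary technical point is ensuring the norm comparison in the last step is genuinely asymptotic rather than merely pointwise at a single metric; this is the only spot where the infinitesimal structure of $\mathrm{FS}_k$ — not just its value — plays a role.
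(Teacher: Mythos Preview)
Your overall framework---using the distance-decreasing property of balancing flow to reduce to the infinitesimal estimate $\bigl\|\dot{\hat h}_k(s) - V_k(\hat h_k(s))\bigr\|_{d_k} = O(k^{-1})$, uniformly in $s$---matches the paper's argument exactly, and your identification of the two potentials as $S(\omega(s))-\bar S + O(k^{-1})$ is correct.

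The gap is in your final step, where you propose to pass from an $L^2$ bound on the potential $k^{-1}H_A$ back to a bound on $k^{-(n+2)}\tr A^2$. That direction of the comparison is not available with a uniform constant. From Lemma~\ref{L21 equality} (with $B=A$) one has
\[
\tr(A^2\bar\mu)=\|H_A\|_{L^2_1}^2+\int_X|\xi_A^\perp|^2\,\frac{\omegaFS^n}{n!},
\]
and since $\|\bar\mu-1\|_{\mathrm{op}}\to 0$ the left side is essentially $\tr A^2$. The normal term $\int|\xi_A^\perp|^2$ is not controlled by $H_A$: a Hermitian matrix that moves $X$ normally in $\C\P^{N_k}$ can have small $H_A$ and large $\tr A^2$. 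So the claimed $1+O(k^{-1})$ two-sided equivalence between the rescaled Killing norm and the $L^2$ norm on potentials fails for general $A$; the paper's parenthetical remark about convergence ``in a certain sense'' is only a heuristic for the choice of rescaling, not a usable estimate.

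The paper avoids this by never passing through potentials. It observes that both tangent vectors are of the special shape $\int_X(s_\alpha,s_\beta)\,F\,\omega^n/n!$, so that
\[
U_k-V_k=\int_X(s_\alpha,s_\beta)\cdot O(1)\,\frac{\omega^n}{n!},
\]
and for matrices of this form the Killing norm is computed \emph{directly}:
\[
k^{-(n+2)}\tr(U_k-V_k)^2=\langle G_k,\,Q_k(G_k)\rangle_{L^2},\qquad G_k=O(k^{-1}),
\]
which is then bounded by $O(k^{-2})$ via Theorem~\ref{Liu-Ma}. Your argument can be repaired by exploiting this specific integral form of $U_k-V_k$ instead of trying to invert the map $A\mapsto H_A$.
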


\begin{proof}
We begin by considering the tangent vector to $\hat h_k(t)$. In general, given a smooth path $h(t)=e^{\phi(t)}h_0$ of positively curved Hermitian metrics, the infinitesimal change in $L^2$-inner-product on $H^0(L^k)$ corresponds to the Hermitian matrix $U = (U_{\alpha \beta})$ where
\begin{equation}\label{tangent to path}
U_{\alpha \beta}
=
\int_X (s_\alpha, s_\beta)\left(k\dot\phi + \Delta \dot \phi \right) \frac{\omega^n}{n!}.
\end{equation}
Here $s_\alpha$ is an $L^2(h^k(t), \omega(t))$-orthonormal basis of $H^0(L^k)$ and all relevant quantities are computed with respect to $h(t)$ and $\omega(t)$. The term $k\dot\phi$ here corresponds to the infinitesimal change to the fibrewise metric whilst the term $\Delta \dot \phi$ corresponds to the infinitesimal change in volume form. 

In our case, this gives that the tangent $U_k(t)$ to $\hat h_k(t)$ is the Hermitian matrix
$$
U_k = \int_X (s_\alpha, s_\beta)\left(k (S- \bar S) + \Delta S \right) \frac{\omega^n}{n!},
$$
where $S$ is the scalar curvature of $\omega(t)$.

Meanwhile, the tangent to balancing flow through the same point $\hat h_k(t)$ is the Hermitian matrix
$$
V_k
=
2 \pi k^2 \int_X \left( 
\frac{\delta_{\alpha \beta}}{N_k+1} - \frac{(s_\alpha, s_\beta)}{\rho_k} 
\right)
\frac{\omegaFS^n}{n!},
$$
where $\rho_k$ is the Bergman function for $h(t)$. 

Using the asymptotic expansion of $\rho_k$ and the fact that $\omega(t) = \frac{1}{k}\omegaFS + O(k^{-2})$, there is an asymptotic expansion of $V_k$:
\begin{equation}\label{tangent to BF}
V_k
=
\int_X (s_\alpha, s_\beta)( k(S - \bar S) + O(1)) \frac{\omega^n}{n!}.
\end{equation}
So
$$
U_k-V_k
=
\int_X (s_\alpha, s_\beta)\, O(1) \frac{\omega^n}{n!}.
$$ 
It follows that we can write the norm of $U_k - V_k$ in terms of the operators $Q_k = Q_k(\omega(t))$ appearing in Liu--Ma's Theorem \ref{Liu-Ma}: 
$$
\frac{\tr\left(U_k - V_k\right)^2}{k^{n+2}}
=
\int_{X\times X}
K_k(p,q) G_k(p)G_k(q)
=
\langle G_k, Q_k(G_k) \rangle_{L^2}
$$
where $G_k = O(k^{-1})$.

Now, denoting the $L^2$-norm by $\| \cdot \|$, 
\begin{eqnarray*}
\langle G_k, Q_k(G_k) \rangle_{L^2}
&\leq&
\|G_k\| \|Q_k(G_k)\|, \\
&\leq&
\|G_k\| \left( \frac{C}{k}\|G_k\| + \|\exp(-\Delta/(4\pi k)) G_k \| \right), \\
&\leq&
\|G_k\|^2 \left(\frac{C}{k} + 1\right),\\
&=&
O(k^{-2}).
\end{eqnarray*}
(The penultimate inequality uses the fact that the heat kernel reduces the $L^2$-norm.) So $U_k - V_k$ is $O(k^{-1})$ in the rescaled symmetric Riemannian metric on $\mathcal B_k$. Moreover, the bound is uniform in $t$ because of the uniformity in the asymptotic behaviour of $\rho_k$ and $Q_k$. This amounts to the infinitesimal version of the result we are aiming for. 

In order to prove the actual result, let
$$
f_k(t) = d_k\left(h_k(t), \hat h_k(t)\right)
$$
denote the distance we are trying to control. Let $\tilde h_k(t)$ denote the balancing flow which at time $t= t_0$ passes through the point $\hat h(t_0)$. Our bound on $U_k-V_k$ says that $\hat h_k$ and $\tilde h_k$ are tangent to $O(k^{-1})$ at $t=t_0$. Now balancing flow is the downward gradient flow of a geodesically convex function, hence is distance decreasing (this follows from the general moment map description alluded to in \S\ref{moment map picture}; it was discovered prior to the moment map interpretation by Paul \cite{paul-gaocms} and Zhang \cite{zhang-harossv}). So $\tilde h_k$ and $h_k$ get closer and closer together. It follows that there is a constant $C$ such that for all $k$, at $t=t_0$,
$$
\frac{\diff f_k}{\diff t} 
\leq \frac{C}{k},
$$
However, $t_0$ was arbitrary, hence $f_k$ has sub-linear growth on $[0,T]$ and, moreover, $f_k(0)=0$ so $f_k(t) \leq CT/k$ for all $t$.
\end{proof}

\subsection{Higher order approximations}

Unfortunately, the fact that the images of Calabi flow in $\mathcal B_k$ approximate balancing flow to $O(k^{-1})$ with respect to $d_k$ is not sufficient for us to show that the balancing flows converge to Calabi flow. This is similar to the problem encountered by Donaldson and we resolve it by the parabolic analogue of the trick appearing in \S 4.1 of \cite{donaldson-scape1}. Namely, we perturb the Calabi flow $h(t) = e^{\phi(t)}h$ by a polynomial in $k^{-1}$ and consider instead a sequence of flows indexed by $k$. Let
$$
\psi(k; t)
=
\phi(t) + \sum_{j=1}^m k^{-j} \eta_j(t)
$$
where $\eta_j(t)$ are some judiciously chosen time-dependent potentials. Denote by $h(k;t) = e^{\psi(k;t)}h$ the corresponding sequence of paths of Hermitian metrics. Their curvatures give the perturbation
$$
\omega(k;t)
=
\omega(t) + i\delb\del \sum_{j=1}^m k^{-j} \eta_j(t)
$$
of Calabi flow on the level of K\"ahler forms. Note that for any given choice of $\eta_j$, $\omega(k;t)$ is positive for large enough $k$.

Let $h'_k(t) \in \mathcal B_k$ denote the $k^\mathrm{th}$ Bergman point of $h(k;t)$; i.e., given a basis $s_\alpha$ of $H^0(L^k)$ which is orthonormal with respect to the $L^2$-inner-product determined by $h(k;t)$ and $\omega(k;t)$, $h'_k(t)$ is pull-back of the Fubini--Study metric in $\mathcal O(1) \to \C\P^{N_k}$ or, equivalently, the unique Hermitian metric in $L^k$ such that $\sum |s_\alpha|^2 =1$.

Our goal in this subsection is to prove:

\begin{theorem}\label{higher order approximation}
For any $m$, there exist functions $\eta_1, \ldots, \eta_m$ and a constant $C$ such that the perturbed Calabi flow $h(k;t) = e^{\psi(k;t)}h$ with
$$
\psi(k; t)
=
\phi(t) + \sum_{j=1}^m k^{-j} \eta_j(t)
$$
satisfies for all $t \in [0,T]$ and all $k$,
$$
d_k\left(h_k(t), h'_k(t)\right) \leq \frac{C}{k^{m+1}}
$$
\end{theorem}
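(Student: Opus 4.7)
The plan is to strengthen the infinitesimal bound from Proposition~\ref{first approximation}: rather than just $\|U_k(t)-V_k(t)\|_{d_k}=O(k^{-1})$, I seek analogues $\|U'_k(t)-V'_k(t)\|_{d_k}=O(k^{-(m+1)})$ for the tangents to $h'_k(t)$ and to the balancing flow through $h'_k(t)$, under an appropriate choice of the $\eta_j$. I impose $\eta_j(0)=0$ so that $h'_k(0)=\hat h_k(0)=h_k(0)$; then integrating the infinitesimal bound using the distance-decreasing property of balancing flow, exactly as in Proposition~\ref{first approximation}, yields the theorem.

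To derive equations for the $\eta_j$, I compute $U'_k$ from~\eqref{tangent to path} with $\phi(t)$ replaced by $\psi(k;t)=\phi(t)+\sum_{j=1}^{m} k^{-j}\eta_j(t)$, and I compute $V'_k$ using~\eqref{tangent to BF} but keeping more terms of the asymptotic expansion of $\rho_k(\omega(k;t))$ provided by Theorem~\ref{asymptotics of Bergman}. The difference $U'_k-V'_k$ is represented by an integrand $(s_\alpha,s_\beta)G_k$ with
$$
G_k(t) = \bigl[k\dot\psi + \Delta_{k,t}\dot\psi\bigr] - \bigl[k\bigl(S(\omega(k;t))-\bar S\bigr) + R_k(t)\bigr],
$$
where $\Delta_{k,t}$ is the Laplacian of $\omega(k;t)$ and $R_k(t)$ collects the $O(1)$ and lower contributions from $A_2,A_3,\ldots$ and from the expansion of $\tfrac1k\omegaFS$. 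Substituting $\psi=\phi+\sum k^{-j}\eta_j$ and expanding in powers of $k^{-1}$, the $k^{+1}$-coefficient of $G_k$ vanishes because $\phi(t)$ satisfies Calabi flow, while for $j=1,\ldots,m$ the $k^{-(j-1)}$-coefficient takes the form
$$
\dot\eta_j - DS_{\omega(t)}[\eta_j] - g_{j-1}\bigl[\phi,\eta_1,\ldots,\eta_{j-1}\bigr],
$$
where $DS_{\omega(t)}$ is the linearisation of $\phi\mapsto S(\omega_\phi)$ at $\omega(t)$ and $g_{j-1}$ is a smooth differential-polynomial expression in $\phi$, its time derivatives, and $\eta_1,\ldots,\eta_{j-1}$, assembled from the fixed parts of the $k^{-1}$-expansions of $S(\omega(k;t))$, $\Delta_{k,t}$, and $R_k(t)$.

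Setting each of these coefficients to zero gives, for $j=1,\ldots,m$, a linear parabolic initial value problem
$$
\dot\eta_j - DS_{\omega(t)}[\eta_j] = g_{j-1}\bigl[\phi,\eta_1,\ldots,\eta_{j-1}\bigr],\qquad \eta_j(0)=0,
$$
on $X\times[0,T]$. The leading symbol of $-DS_{\omega(t)}$ is that of the Lichnerowicz operator $\mathcal D^*\mathcal D$, a non-negative fourth-order self-adjoint elliptic operator with coefficients depending smoothly on $t$ via the Calabi flow $\omega(t)$. Standard linear parabolic theory therefore provides a smooth solution $\eta_j$ on $[0,T]$, and solving inductively in $j$ produces all the required potentials. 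With these choices, $G_k(t)=O(k^{-(m+1)})$ in $C^\infty$, uniformly in $t\in[0,T]$. The $\langle G_k,Q_kG_k\rangle_{L^2}$ computation from Proposition~\ref{first approximation} then expresses $\tr(U'_k-V'_k)^2/k^{n+2}$ as $\langle G_k,Q_kG_k\rangle_{L^2}$; applying Theorem~\ref{Liu-Ma} together with the $L^2$-contraction property of the heat semigroup yields $\|U'_k(t)-V'_k(t)\|_{d_k}=O(k^{-(m+1)})$, and integrating via the distance-decreasing argument of Proposition~\ref{first approximation} completes the proof.

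The principal technical obstacle is organising the expansions of $\rho_k(\omega(k;t))$, $\Delta_{k,t}$, and $S(\omega(k;t))$ carefully enough that the $k^{-(j-1)}$-coefficient of $G_k$ genuinely isolates $\eta_j$ under a top-order operator of the form $\partial_t - DS_{\omega(t)}$, with a source depending only on $\eta_1,\ldots,\eta_{j-1}$. The uniformity clauses of Theorems~\ref{asymptotics of Bergman} and~\ref{Liu-Ma}, valid along the time-varying family $\omega(k;t)$ which stays bounded in $C^s$ on $[0,T]$, are essential throughout to guarantee that the remainders are $C^\infty$-small uniformly in $t$.
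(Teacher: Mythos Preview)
Your proposal is correct and follows essentially the same route as the paper: expand the tangent-vector difference in powers of $k^{-1}$, choose each $\eta_j$ inductively to solve a linear fourth-order parabolic equation $\dot\eta_j + L_t\eta_j = g_{j-1}$ (with $L_t$ having principal part $\mathcal D^*\mathcal D$) so as to kill successive coefficients, then bound the remainder via Liu--Ma and integrate using the distance-decreasing property of balancing flow exactly as in Proposition~\ref{first approximation}. One minor bookkeeping point: killing the $k^0,\ldots,k^{-(m-1)}$ coefficients leaves your integrand $G_k=O(k^{-m})$ rather than $O(k^{-(m+1)})$, but the identity from Proposition~\ref{first approximation} actually reads $\tr(U'_k-V'_k)^2/k^{n+2}=k^{-2}\langle G_k,Q_kG_k\rangle_{L^2}$ (the extra $k^{-2}$ coming from the normalisation $K_k=k^{-n}|B_k|^2$), so the final bound $\|U'_k-V'_k\|_{d_k}=O(k^{-(m+1)})$ still holds.
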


\begin{proof}
The proof is by induction with Proposition \ref{first approximation} providing the case $m=0$. For clarity, we explain first the case $m=1$ in detail before moving to the general inductive step. So, let $\psi(k; t) = \phi(t) + k^{-1} \eta(t)$ for some $\eta$ which we will now explain how to find.

Let $A_k(t)$ denote the Hermitian matrix corresponding to the tangent to the path $h'_k(t)$. From equation (\ref{tangent to path}), we have that
$$
A_k
=
\int_X (s_\alpha, s_\beta)\left(
k (S(\omega(t))-\bar S) + \dot \eta + \Delta S + O(k^{-1})
\right)
\frac{\omega(k;t)^n}{n!}.
$$
Here $S(\omega(t)) - \bar S$ is the tangent of the \emph{unperturbed} Calabi flow $h(t)$ and the Lapalcian is that of the \emph{unperturbed} metric $\omega(t)$;  the $O(k^{-1})$ terms involve $k^{-1}\Delta \dot\eta$ and also the fact that in the full formula, the Laplacian of $\omega(k;t)$ should appear, but this agrees with $\Delta$ to $O(k^{-1})$. 

Next we compute the asympototics of the Hermitian matrix $B_k(t)$ which is tangent to balancing flow through the point $h'_k(t)$. Let $L_t$ denote the linearisation of the scalar curavture map associated to $\omega(t)$; so $S(\omega(k;t)) = S(\omega(t)) + k^{-1}L_t(\eta(t)) + O(k^{-2})$. $B_k$ is given, as in equation (\ref{tangent to BF}), by the following expression, where we have explicitly notated the $O(1)$ term by $F$:
\begin{eqnarray*}
B_k
&=&
\int_X
(s_\alpha, s_\beta)
\Big(
k\left( S(\omega(k;t) - \bar S)\right) + F + O(k^{-1}) 
\Big)
\frac{\omega(k;t)^n}{n!}\\
&=&
\int_X
(s_\alpha, s_\beta)
\Big(
k\left(S(\omega(t) - \bar S)\right)  - L_t(\eta(t)) + F + O(k^{-1})  
\Big)
\frac{\omega(k;t)^n}{n!}
\end{eqnarray*}
Here we use the uniformity in Theorem \ref{asymptotics of Bergman} along with the fact that $\omega(k;t) \to \omega(t)$ in $C^\infty$ when expanding $\rho_k(\omega(k;t))$. It follows that 
$$
A_k - B_k
=
\int_X (s_\alpha, s_\beta)
\Big[
\dot\eta + L_t(\eta) - F + \Delta S
+ O(k^{-1})
\Big]
\frac{\omega(k;t)^n}{n!}.
$$

Now we chose $\eta$ to solve the Cauchy problem for the inhomogeneous, non-autonomous, linear, parabolic evolution equation:
\begin{equation}\label{linear parabolic}
\dot\eta + L_t(\eta) 
=
F - \Delta S
\end{equation}
for $t\in [0,T]$, with initial condition $\eta(0)=0$. It is standard that equation (\ref{linear parabolic}) has a solution provided the spectra of the operators $L_t$ are bounded below. The lower bound on the spectra ensures that for each $t$, $-L_t$ generates an analytic strongly continuous semi-group and from here the existence of a solution to equation (\ref{linear parabolic}) follows from semi-group theory. See, for example, the texts \cite{amann-laqpp} or \cite{zhang-harossv}. To verify that each of the operators $L_t$ have only finitely many negative eigenvalues, we use the fact that
$$
L_t(\eta)
=
\mathcal D^* \mathcal D(\eta) - (\nabla \eta , \nabla S)
$$
which appears, for example, in \cite{donaldson-rogtcga4mt}. For our purposes, all that matters in this expression is that the first term $\mathcal D^*\mathcal D$ is non-negative, elliptic and of higher order than the second term involving gradients. This means we can connect $L_t$ by a path of elliptic operators $L_t(s)$ to the non-negative operator $\mathcal D^* \mathcal D$:
$$
L_t(s)(\eta) 
=
\mathcal D^*\mathcal D(\eta) - s(\nabla \eta, \nabla S).
$$
As $s$ runs from $0$ to $1$ it is standard that only finitely many eigenvalues of $L_t(s)$ can become negative, proving that the spectrum of $L_t = L_t(1)$ is bounded below. 

With this choice of $\eta$, tracing through the argument used in the proof of Proposition \ref{first approximation} we see that
$$
\frac{\tr\left(A_k - B_k\right)^2}{k^{n+2}}
=
\int_{X\times X}
K_k(p,q) G_k(p)G_k(q)
=
\langle G_k, Q_k(G_k) \rangle_{L^2}
$$
where this time $G_k = O(k^{-2})$. As before, it follows from Liu--Ma's theorem that there is a constant $C$ such that, for all $t\in [0,T]$,
$$
\frac{\tr\left(A_k - B_k\right)^2}{k^{n+2}}
\leq
\frac{C}{k^4}.
$$
Throughout, we have expanded $\rho_k(\omega(k;t))$ and $Q_k(\omega(k;t))$ using the uniformity in Theorems \ref{asymptotics of Bergman} and \ref{Liu-Ma} along with the fact that $\omega(k;t) \to \omega(t)$ in $C^\infty$, uniformly for $t\in[0,T]$. This is the infinitesimal version of the result with $m=1$. As in the proof of Proposition \ref{first approximation} this implies that there is a constant $C$ such that 
$$
d_k\left(h_k(t), h'_k(t)\right) \leq \frac{C}{k^2}
$$
and so the result with $m=1$ is true.

For general $m$, we work iteratively and assume we have selected $\eta_j$ for $j=1, \ldots m-1$ solving a collection of linear parabolic evolution equations to be specified. Let 
$$
\psi(k; t) =\phi(t) + \sum_{j=1}^mk^{-j}\eta_j
$$ 
where we will find $\eta_m$ presently. Using equation (\ref{tangent to path}) we have that the tangent to the path $h'_k(t)$ is
$$
A_k
=
\int_X(s_\alpha, s_\beta)
\left( k(S - \bar S) 
+ 
\sum_{j=0}^{m-1} k^{-j}\dot \eta_{j+1} 
+ 
\Delta' S
+
\sum_{j=1}^m k^{-j} \Delta' \dot \eta_j
\right)\, \frac{\omega^n(k;t)}{n!}
$$
where $S =S(\omega(t))$ and $\Delta'$ is the Laplacian of the metric $\omega(k;t)$. 

The Laplacian depends analytically on the metric meaning that we can write $\Delta'$ as a power series in $k^{-1}$:
$$
\Delta' = \Delta_0 + k^{-1} \Delta_1 + \cdots
$$
where $\Delta_0$ is the Laplacian of the unperturbed metric $\omega(t)$ and $\Delta_r$ depends only on $\eta_j$ for $j=1, \ldots r$. This means that the Laplacian terms in the integrand for $A_k$ expands futher as
\begin{eqnarray*}
\Delta' S 
&=&
\sum_{j=0}^{m-1}
k^{-j}\Delta_j S + O(k^{-m})\\
\sum_{j=1}^m k^{-j} \Delta'\dot \eta_j
&=&
\sum_{j+r=1}^{m-1} k^{-j-r} \Delta_r \dot \eta_j
+
O(k^{-m})
\end{eqnarray*}
Crucially, the choice of $\eta_m$ only affects the $O(k^{-m})$ terms in these two expansions and no lower order terms. So, up to $O(k^{m-1})$, the only contribution of $\eta_m$ to $A_k$ is the term involving $k^{-m+1}\dot \eta_m$. Hence we can write 
$$
A_k = \int_X (s_\alpha, s_\beta) \left(k(S - \bar S) + \sum_{j=0}^{m-1}k^{-j}M_j + k^{-m+1}\dot \eta_m + O(k^{-m})\right)\frac{\omega^n(k;t)}{n!}
$$
where $S=S(\omega(t))$ and the $M_j$ are determined by the $\eta_j$ for $j < m$.

Meanwhile, as in equation (\ref{tangent to BF}), 
$$
B_k 
= 
\int_X(s_\alpha, s_\beta) 
\Big(k\left(S(\omega(k;t)) - \bar S)\right) + \Phi_k \Big)\, \frac{\omega(k;t)^n}{n!}
$$
where $\Phi_k $ is built out of the Bergman function $\rho_k(\omega(k;t))$ by a combination of $\rho_k^{-1}$ and errors introduced by replacing $\omegaFS$ with $\omega(k;t)$. Theorem \ref{asymptotics of Bergman} says that $\rho_k$ has an asymptotic expansion in which the coefficients are polynomials in the curvature of $\omega(k;t)$. Consequently, $\Phi_k$ has an asymptotic expansion, this time in increasing powers of $k^{-1}$, and again the coefficients are polynomials in the curvature of $\omega(k;t)$. It follows that the first contribution of $\eta_m$ to $\Phi_k$ occurs at $O(k^{-m})$. In addition, scalar curvature depends analytically on the metric, so again we have that the only contribution of $\eta_m$ to $S(\omega(k;t))$ occurs at $O(k^{-m})$ and here the contribution is precisely $k^{-m}L_t(\eta_m)$. So we can write $B_k$ as
$$
\int_X(s_\alpha, s_\beta)
\left( k( S - \bar S) + \sum_{j=0}^{m-2}k^{-j}F_j + k^{-m+1}(F_m- L_t(\eta_m) ) + O(k^{-m})\right)
$$
where $S = S(\omega(t))$ and all $F_j$ for $j<m$ are determined by $\eta_j$ for $j <m$.

To complete the proof, we assume that we have chosen the $\eta_1, \ldots, \eta_{m-1}$ so that the terms of $O(k^{-m+2})$ in $A_k - B_k$ cancel and, moreover, so that $\eta_j(0)=0$. This amounts to solving a sequence of parabolic Cauchy problems of the form (\ref{linear parabolic}) in which the inhomogeneous term in the $j^\mathrm{th}$ equation involves the solutions to all previous equations. Assuming this is done we are left with 
$$
A_k - B_k 
= 
\int_X (s_\alpha, s_\beta)\Big[k^{-m+1}(\dot \eta_m + L_t(\eta_m) + M_m - F_m) + O(k^{-m})\Big]\frac{\omega^n(k;t)}{n!}
$$
for $M_m$ and $F_m$ depending only on $\eta_j$ for $j<m$. Choosing $\eta_m$ to solve the parabolic equation
$$
\dot \eta_m + L(\eta_m) = F_m - M_m
$$
with $\eta_m(0)=0$ gives 
$$
A_k -B_k = \int_X (s_\alpha, s_\beta) O(k^{-m}) \, \frac{\omega(k;t)^n}{n!}
$$
and from here the proof proceeds via Liu--Ma's theorem precisely as above.
\end{proof}

\section{Analytic estimates}\label{analytic estimates}

The previous section produced, for a given integer $m$, a sequence of flows $\omega(k; t)$ for which $\omega(k;t) \to \omega(t)$ as $k\to\infty$ in $C^\infty$ and such that the $k^\mathrm{th}$ Bergman point $h'_k(t)$ of $\omega(k;t)$ satisfies $d_k(h'_k(t), h_k(t)) = O(k^{-m-1})$. To complete the proof that $\omega_k(t) \to \omega(t)$ in $C^\infty$ we use the fact that, in the regions of $\mathcal B_k$ of interest to us at least, $k^{(r/2) + 1+ n}d_k$ uniformly controls the $C^r$ norm on the curvature tensors of Bergman metrics. It is precisely this power of $k$ appearing in front of $d_k$ which makes the higher order approximations of Theorem \ref{higher order approximation} necessary. 

The first step in controlling the $C^r$ norm, carried out in this section, is to prove some analytitc estimates which reduce the problem to controlling the norm of the matrix $\bar\mu$. The main estimate we use is due to Donaldson \cite{donaldson-scape1}. We give here a brief description of the relevant part of \S 3.2 of \cite{donaldson-scape1}. In order to avoid worrying about powers of $k$ at every step here, when proving the estimates we use for each $k$ the \emph{large} metrics in the class $k c_1(L)$. Then, at the end it is a simple matter to rescale to metrics in the fixed class and take care of the powers of $k$ at a single stroke. 

Fix a reference metric $\omega_0 \in c_1(L)$ and denote $\tilde \omega_0 = k \omega_0 \in kc_1(L)$. We say another metric $\tilde \omega \in kc_1(L)$ has \emph{$R$-bounded geometry in $C^r$} if $\tilde \omega > R^{-1} \tilde \omega_0$ and
$$
\| \tilde \omega - \tilde \omega_0 \|_{C^r} < R
$$
where the norm $\|\cdot \|_{C^r}$ is that determined by the metric $\tilde \omega_0$.
Given a basis $\{s_\alpha\}$ for $H^0(L^k)$ we get an embedding $X \subset \C\P^{N_k}$ and hence a metric $\tilde \omega =\omegaFS|_X$. Equivalently, $2\pi i\tilde \omega$ is the curvature of the unique metric on $L^k$ for which $\sum |s_\gamma|^2 = 1$. We say that the basis $\{s_\alpha\}$, or the corresponding point in $\mathcal B_k$, has $R$-bounded geometry if the metric $\tilde \omega$ does.

Given a basis $\{s_\alpha\}$ and a Hermitian matrix $A = (A_{\alpha \beta})$ define
$$
H_A = \sum A_{\alpha \beta} (s_\alpha, s_\beta)
$$
where we have taken the inner-product here using the pull-back of the Fubini--Study metric for which $\sum | s_\gamma|^2 =1$. Note that $H_A = \tr(A\mu)$ restricted to $X$ and so is the potential giving the infinitesimal deformation corresponding to $A$ of the restriction of the Fubini--Study metric to $X$. As a final piece of notation, we denote by $\| A \|_{\mathrm{op}}$ the maximum of the moduli of the eigenvalues of $A$ and by $\| A\|=\sqrt {\tr A^2}$ the norm of $A$ with respect to the Killing form. The first estimate we want is the following.

\begin{proposition}[Donaldson \cite{donaldson-scape1}]\label{donaldson's estimate}
There is a constant $C$ such that for all points of $\mathcal B_k$ with $R$-bounded geometry in $C^{r}$ and any Hermitian matrix $A$,
$$
\| H_A \|_{C^r} 
\leq C \| \bar \mu\|_{\mathrm{op}} \| A \|,
$$
where $\bar \mu=\int_X \mu\, \tilde\omega^n/n!$ is computed using the embedding corresponding to the point of $\mathcal B_k$ and the $C^r$-norm is taken with respect to the fixed reference metric $\tilde \omega_0$.
\end{proposition}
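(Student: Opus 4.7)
The plan is to prove a pointwise/$L^2$ estimate that brings $\|\bar\mu\|_{\mathrm{op}}$ into the picture, then upgrade to a $C^r$ estimate using the uniform analytic estimates afforded by the $R$-bounded geometry hypothesis.

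First I would exploit the fact that at each $p\in X$ the matrix $\mu(p)$ is a rank-one Hermitian projector of unit trace. Writing $\mu(p) = v(p)v(p)^*$ with $|v(p)|=1$, we have $H_A(p) = v^*Av$, so Cauchy--Schwarz in $\C^{N+1}$ gives the pointwise bound
$$
|H_A(p)|^2 = |v^*Av|^2 \leq (v^*A^2v)(v^*v) = \tr\bigl(A^2\mu(p)\bigr).
$$
Integrating against $\tilde\omega^n/n!$ and using $\int_X \mu\,\tilde\omega^n/n! = \bar\mu$ yields the clean integrated identity
$$
\|H_A\|_{L^2}^2 \leq \tr(A^2\bar\mu) \leq \|\bar\mu\|_{\mathrm{op}}\|A\|^2,
$$
which is where $\|\bar\mu\|_{\mathrm{op}}$ and the Killing-form norm $\|A\|$ enter together for the first time.

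To control derivatives, I would use that $\tr(A\mu)$ is the Hamiltonian for the Killing vector field $\xi_A$ of the $\U(N+1)$-action on $(\C\P^N,\omega_{\mathrm{FS}})$. Since this action is by isometries and $\C\P^N$ is homogeneous, all covariant derivatives of $\tr(A\mu)$ in the Fubini--Study metric are bounded pointwise by $C_r\|A\|_{\mathrm{op}}$ with $C_r$ universal. The $R$-bounded geometry in $C^r$ gives a uniform $C^r$-comparability between $\tilde\omega = \iota^*\omega_{\mathrm{FS}}$ and $\tilde\omega_0$ together with uniform control of $\iota$, so pull-back transfers the $C^r$-bound on $\C\P^N$ to a $C^r$-bound on $(X,\tilde\omega_0)$ with constants depending only on $R$. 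Interpolating this pointwise $C^r$-control with the $L^2$-bound above (using Sobolev/Gagliardo--Nirenberg in $R$-bounded geometry) produces a $C^r$-estimate of the shape $\|H_A\|_{C^r}\leq C \|\bar\mu\|_{\mathrm{op}}^{1/2}\|A\|$. The exponent on $\|\bar\mu\|_{\mathrm{op}}$ can then be raised from $1/2$ to $1$ at the cost of enlarging $C$, because $\tr\bar\mu = Vk^n$ and $N_k+1 = O(k^n)$ force $\|\bar\mu\|_{\mathrm{op}}\geq\tr\bar\mu/(N_k+1)\geq c_0>0$ uniformly.

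The main obstacle is bookkeeping rather than conceptual: verifying that every constant is uniform across the $R$-bounded stratum of $\mathcal{B}_k$ and independent of $k$, given that two distinct analytic sources of estimates (the homogeneity of $\C\P^N$ and the bounded-geometry pull-back to $X$) must be combined. The rank-one structure of $\mu(p)$ and its moment-map interpretation are what make this possible, for they ensure that the only data controlling the estimate are the unitary-invariants $\bar\mu$ and $\|A\|$, which are exactly the quantities that $R$-bounded geometry permits us to bound uniformly.
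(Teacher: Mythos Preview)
Your $L^2$ estimate $\|H_A\|_{L^2}^2 \leq \tr(A^2\bar\mu) \leq \|\bar\mu\|_{\mathrm{op}}\|A\|^2$ is correct (and appears later in the paper as Lemma~\ref{L21 estimate}, even with the gradient term included). The difficulties are in the two subsequent steps.

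\textbf{The interpolation step does not produce what you claim.} Gagliardo--Nirenberg between $L^2$ and $C^r$ gives control of \emph{intermediate} norms $C^s$, $s<r$, with an exponent $\theta(s,r,n)<1$ on the $L^2$ factor. It cannot improve the $C^r$-endpoint itself: at the top order you simply recover whatever $C^r$ bound you fed in, with no power of $\|\bar\mu\|_{\mathrm{op}}$ at all. To get $C^r$ via interpolation you would have to interpolate against a $C^{r+s}$ bound for some $s>0$, and for that you would need uniform $C^{r+s}$ control of $H_A$ on $X$---which leads to the more serious issue below.

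\textbf{The pull-back step is where the argument breaks.} Your claim that ``$R$-bounded geometry in $C^r$ \ldots\ together with uniform control of $\iota$'' lets you transfer derivative bounds from $\C\P^N$ to $X$ is exactly the point at issue. $R$-bounded geometry bounds the \emph{intrinsic} metric $\tilde\omega=\iota^*\omegaFS$ in $C^r$; it does not bound the embedding $\iota$ or the individual sections $s_\alpha$. Already the intrinsic Hessian $\nabla^X\nabla^X H_A$ differs from the restriction of the ambient Hessian by a second-fundamental-form term, and higher intrinsic derivatives bring in covariant derivatives of the second fundamental form. None of these are furnished by the hypothesis. Indeed, the paper \emph{uses} the proposition you are trying to prove (applied with $A$ a rank-one diagonal matrix) to obtain uniform $C^{r}$ bounds on $|s_\alpha|^2$; see Lemma~\ref{finding R bounded metrics}. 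So invoking control of $\iota$ here is circular. The one case that genuinely comes for free is $r=1$: there $|\nabla^X H_A|\leq |\nabla^{\mathrm{FS}}\tr(A\mu)|$ pointwise, and the paper notes this explicitly in the remark following Lemma~\ref{pointwise equality}.

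\textbf{How the paper avoids this.} The key observation is that although $H_A$ is not holomorphic on $X$, it \emph{is} the restriction to the diagonal of the holomorphic section
\[
\sigma_A=\sum_{\alpha,\beta}A_{\alpha\beta}\,s_\alpha\otimes \tilde s_\beta
\]
of $L^k\boxtimes(\overline{L^k})^*$ over $Z=X\times\overline X$. For holomorphic sections one has the elliptic (Bergman-type) estimate $\|\sigma\|_{C^r(P)}\leq C\|\sigma\|_{L^2(Z)}$ with $C$ depending only on the bounded geometry of $(Z,E,P)$, hence only on $R$. A direct computation gives $\|\sigma_A\|_{L^2(Z)}^2=\tr(A\bar\mu\bar\mu^*A^*)\leq\|\bar\mu\|_{\mathrm{op}}^2\|A\|^2$, and the proposition follows. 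The holomorphicity is what buys uniform $C^r$ control from $L^2$ without ever needing to bound the extrinsic geometry of $\iota$.
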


The key point is that $C$ depends only on $R$ and $r$, but not on $k$. This is proved more-or-less explicitly in the course of the proof of Lemma 24 of \cite{donaldson-scape1}, even though the end result is not stated in quite the form we give here. Accordingly we give only a sketch proof here, giving nearly word-for-word parts of the proof of Lemma 24 of \cite{donaldson-scape1}. 

\begin{proof}[Sketch of proof of Proposition \ref{donaldson's estimate}]
First we recall the following standard estimate. Let $Z$ be a compact complex Hermitian manifold, $E \to X$ a Hermitian holomorphic vector bundle and $P \subset Z$ a differentiable (real) submanifold. There is a constant $C$ such that for any $\sigma \in H^0(E,Z)$,
\begin{equation}\label{elliptic estimate}
\| \sigma \|_{C^r(P)} \leq C \| \sigma \|_{L^2(Z)}.
\end{equation}
Moreover, provided that the data $Z, P, E$ has bounded local geometry in $C^r$ in a suitable sense, $C$ can be taken to be independent of the particular manifolds and bundles involved.

We apply this to the manifold $Z = X \times \overline X$ where $\overline X$ is $X$ with the opposite complex structure. The Hermitian metric on $L^k$ induces a connecion in $L^k$; one component of the connection recovers the holomorphic structure on $L^k\to X$ whilst the other component makes $\overline L^k \to \overline X$ into a holomorphic line bundle. Let $E \to Z$ be the tensor product of the pull-back of $L^k$ from the first factor and $(\overline L^k)^*$ from the second. Given a Bergman metric in $\mathcal B_k$ we take the obvious induced K\"ahler metric on $Z$ and Hermitian metric in $E$. Let $P$ denote the diagonal in $Z$. We will use the estimate (\ref{elliptic estimate}) in this situation along with the fact that the constant can be taken independently of the Bergman metric used, provided it has $R$-bounded geometry in $C^r$.

 A holomorphic section $s$ of $L^k\to X$ defines a holomorphic section $\tilde s$ of $(\overline L^k)^* \to \overline X$ via the bundle isomorphism given by the fibre metric. Thus for any Hermitian matrix $A$ we get a holomorphic section
$$
\sigma_A = \sum A_{\alpha\beta}\, s_\alpha \otimes \tilde s_\beta
$$
of $E$ over $Z$. We have
$$
\| \sigma_A \|^2_{L^2(Z)}
=
\sum A_{\alpha \beta} \overline A_{\alpha' \beta'}
\langle s_\alpha, s_\alpha' \rangle 
\langle s_\beta, s_\beta' \rangle ,
$$
(where $\langle \cdot, \cdot \rangle$ denotes the $L^2$-inner-product). In matrix notation this reads 
$$
\| \sigma_A\|^2_{L^2(Z)}
=
\tr\left(A \bar\mu \bar\mu^* A^*\right).
$$
There is a standard inequality that for Hermitian matrices $G, F$, 
\begin{equation}\label{matrix inequality}
\tr (FGF) \leq \|F\|^2 \|G\|_\mathrm{op}
\end{equation} 
which here gives
$$
\| \sigma_A\|_{L^2(Z)} \leq \|\bar\mu\|_{\mathrm{op}} \|A\|
$$

Now, over $P$, the metric on $L^k$ defines a $C^\infty$ trivialisation of $E$ and the function $H_A = \sum A_{\alpha \beta}(s_\alpha, s_\beta)$ is just the restriction of $\sigma_A$ to the diagonal in this trivialisation. Hence, by the inequality (\ref{elliptic estimate}), we have 
$$
\| H_A\|_{C^r(X)} 
\leq 
C \|\sigma_A\|_{L^2(Z)} 
\leq
C \|\bar \mu\|_{\mathrm{op}} \|A\|.
$$
\end{proof}

We can rephrase this result by saying that under certain conditions, the Riemannian distance on $\mathcal B_k$ controls the $C^{r-2}$-norm on K\"ahler forms. To make this precise, let $\tilde\omega(s)$ for $s \in [0,1]$ denote a path of K\"ahler forms in $\mathcal B_k$. We denote by $L$ the length of the path, measured using \emph{large} symmetric Riemannian metric on $\mathcal B_k$, i.e., the metric corresponding to the Killing form $\tr A^2$. 

\begin{lemma}\label{controlling norm by distance}
If all the metrics $\tilde\omega(s)$ for $s \in [0,1]$ have $R$-bounded geometry in $C^{r}$ and also satisfy  $\|\bar\mu\|_\mathrm{op} < K$ then 
$$
\| \tilde \omega(0) - \tilde \omega(1) \|_{C^{r-2}} < CKL,
$$
where the $C^{r-2}$ norm is taken with respect to the reference metric $\tilde \omega_0$.
\end{lemma}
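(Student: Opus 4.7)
The plan is to integrate along the path. Parametrize the curve $\tilde\omega(s)$ by $s \in [0,1]$, and at each point of $\mathcal B_k$ identify the tangent vector with a Hermitian matrix $A(s)$, using a basis of $H^0(L^k)$ orthonormal for the Bergman metric at $\tilde\omega(s)$. Since $-i\mu$ is a moment map for the $\U(N_k+1)$-action on $\C\P^{N_k}$, the infinitesimal change in $\tilde\omega(s)$ corresponding to $A(s)$ has K\"ahler potential $H_{A(s)} = \tr(A(s)\mu)|_X$, so
\[
\tilde\omega(1) - \tilde\omega(0) = \int_0^1 i\delb\del H_{A(s)}\, \diff s.
\]
Moreover, with respect to the large symmetric metric $\tr A^2$ on $\mathcal B_k$, the speed of the path at $s$ is precisely $\|A(s)\|$, so $\int_0^1 \|A(s)\|\,\diff s = L$.

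Next I would estimate the $C^{r-2}$-norm of the integrand. Since $\delb\del$ is a fixed second-order differential operator on $X$ and the norms are measured using the fixed reference metric $\tilde\omega_0$, there is a constant (depending only on $\tilde\omega_0$) such that
\[
\|i\delb\del H_{A(s)}\|_{C^{r-2}} \leq C_1 \|H_{A(s)}\|_{C^r}.
\]
Apply Proposition~\ref{donaldson's estimate} at the point $\tilde\omega(s) \in \mathcal B_k$, which has $R$-bounded geometry in $C^r$ by hypothesis; using also $\|\bar\mu(s)\|_{\mathrm{op}} < K$, we get
\[
\|H_{A(s)}\|_{C^r} \leq C_2\, \|\bar\mu(s)\|_{\mathrm{op}}\, \|A(s)\| \leq C_2 K \|A(s)\|,
\]
with $C_2$ depending only on $R$ and $r$. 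Combining,
\[
\|\tilde\omega(1) - \tilde\omega(0)\|_{C^{r-2}}
\leq \int_0^1 \|i\delb\del H_{A(s)}\|_{C^{r-2}}\, \diff s
\leq C_1 C_2 K \int_0^1 \|A(s)\|\, \diff s
= C K L,
\]
as required.

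The only real obstacle is bookkeeping: one must verify that the tangent matrix $A(s)$ to a path in $\mathcal B_k$ (in the Hermitian matrix model of the tangent space at the given orthonormal frame) really does have norm $\|A(s)\| = \sqrt{\tr A(s)^2}$ equal to the speed with respect to the large symmetric metric, and that $H_{A(s)}$ genuinely computes $\diff \tilde\omega/\diff s$. Both follow from the moment map interpretation recalled earlier, after reducing to the model case of $\C\P^{N_k}$ itself, where the assertion is standard. Once this identification is in place, the proof is a one-line integration using Proposition~\ref{donaldson's estimate} as a black box.
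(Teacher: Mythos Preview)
Your proof is correct and follows essentially the same approach as the paper: identify the tangent to the path with a Hermitian matrix $A(s)$, bound $\|i\delb\del H_{A(s)}\|_{C^{r-2}} \leq CK\|A(s)\|$ via Proposition~\ref{donaldson's estimate}, and integrate. The paper states this in two lines, whereas you have spelled out the intermediate bookkeeping, but the argument is the same.
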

\begin{proof}
Let $A(s)$ denote the Hermitian matrix which is tangent to the given path. We have that 
$$
\left\|\frac{\del \tilde \omega}{\del t}\right\|_{C^{r-2}}
=
\left\|i \del\delb H_{A(s)}\right\|_{C^{r-2}}\\
\leq
CK\|A\|.
$$
The result now follows by integrating along the path.
\end{proof}

Of course, to apply this lemma we need to find regions in $\mathcal B_k$ which consist of $R$-bounded metrics and also for which $\|\bar\mu\|_\mathrm{op}$ is uniformly controlled. In this direction, we prove the following simple lemma. We denote by $d = k^{n+2}d_k$ the unscaled symmetric Riemannian metric on $\mathcal B_k$. 

\begin{lemma}\label{finding R bounded metrics}
Let $\tilde \omega_k \in \mathcal B_k$ be a sequence of metrics with $R/2$-bounded geometry in $C^{r+2}$ and such that $\|\bar \mu( \tilde \omega_k)\|_{\mathrm{op}}$ is uniformly bounded. Then there is a constant $C$ such that if $\tilde \omega \in \mathcal B_k$ has $d(\tilde\omega_k, \tilde \omega) < C$, then $\tilde \omega$ has $R$-bounded geometry in $C^{r}$.
\end{lemma}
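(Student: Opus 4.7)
The plan is a continuity argument along a minimizing geodesic $\tilde\omega(s)$, $s\in[0,1]$, connecting $\tilde\omega_k$ to $\tilde\omega$ in $\mathcal B_k$, of Killing-form length $L=d(\tilde\omega_k,\tilde\omega)<C$, where $C$ is to be fixed small in terms of $R$, $r$, and a uniform bound $K_0$ on $\|\bar\mu(\tilde\omega_k)\|_\mathrm{op}$. Define $J\subset[0,1]$ to be the set of $s$ such that for every $s'\in[0,s]$ the metric $\tilde\omega(s')$ has $R$-bounded geometry in $C^{r+2}$ and $\|\bar\mu(\tilde\omega(s'))\|_\mathrm{op}\le 2K_0$. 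The strict $R/2$-boundedness of $\tilde\omega_k$ in $C^{r+2}$ and the bound $\|\bar\mu(\tilde\omega_k)\|_\mathrm{op}\le K_0$ ensure that $J$ contains a neighbourhood of $0$, and $J$ is closed; write $s^*=\sup J$.

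On $[0,s^*]$ the hypotheses of Lemma \ref{controlling norm by distance} hold with $R$-bounded geometry in $C^{r+2}$ along the path, yielding the uniform-in-$k$ estimate
$$
\|\tilde\omega(s)-\tilde\omega_k\|_{C^r}\le 2C'K_0L<2C'K_0C
\qquad \text{for all } s\in[0,s^*],
$$
where $C'$ depends only on $R$ and $r$. Combined with $\|\tilde\omega_k-\tilde\omega_0\|_{C^r}<R/2$ and $\tilde\omega_k>2R^{-1}\tilde\omega_0$, and with the Lipschitz dependence of $\bar\mu$ on the point of $\mathcal B_k$, choosing $C$ sufficiently small gives strict $R$-bounded geometry in $C^r$ at every $s\in[0,s^*]$ together with the strict bound $\|\bar\mu(\tilde\omega(s))\|_\mathrm{op}<2K_0$. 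If $s^*=1$ this is exactly the claim.

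The main obstacle is ruling out $s^*<1$: this requires upgrading the above $C^r$ control to \emph{strict} $C^{r+2}$ control at $s^*$, so that $J$ extends past $s^*$ by continuity of $s\mapsto\tilde\omega(s)$. Lemma \ref{controlling norm by distance} loses exactly two derivatives and so cannot be iterated at the same order to supply $C^{r+2}$ estimates directly. One closes the gap by exploiting that the Bergman metrics depend smoothly on a parameter in $\GL(N_k+1)/\U(N_k+1)$ and running Proposition \ref{donaldson's estimate} at $\tilde\omega_k$: the strict $R/2$-bound in $C^{r+2}$ there gives a uniform-in-$k$ estimate on the $C^{r+2}$ derivative of $s\mapsto\tilde\omega(s)$ at $s=0$, and a perturbation argument exploiting smoothness of the $\GL/\U$ parametrisation propagates this bound throughout $[0,s^*]$ uniformly in $k$, forcing strict $C^{r+2}$ control there provided $C$ is chosen small. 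This yields $s^*=1$ and completes the proof.
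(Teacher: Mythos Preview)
Your continuity argument stalls at the derivative loss, and the final paragraph does not actually close the gap. First, the claim that Proposition~\ref{donaldson's estimate} at $\tilde\omega_k$ yields a uniform $C^{r+2}$ bound on $\partial_s\tilde\omega(s)|_{s=0}$ is off by two derivatives: with $R/2$-bounded geometry in $C^{r+2}$ one gets $\|H_A\|_{C^{r+2}}$ controlled, hence only $\|i\bar\partial\partial H_A\|_{C^r}$, i.e.\ the $C^r$ speed of the path. Second, the ``perturbation argument exploiting smoothness of the $\GL/\U$ parametrisation'' is exactly where a uniform-in-$k$ constant is required; for each fixed $k$ smoothness gives some constant, but $N_k\to\infty$ and nothing you have written prevents these constants from blowing up. The only tool in sight producing $k$-independent $C^{r+2}$ control of potentials is Proposition~\ref{donaldson's estimate} itself, and that demands $C^{r+2}$-bounded geometry at the point of application --- precisely what you are trying to establish along $[0,s^*]$. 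The argument is circular.

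The paper sidesteps the open--closed machinery entirely. The geodesic from $\tilde\omega_k$ to $\tilde\omega$ is $e^{sB}\cdot\tilde\omega_k$ with $\|B\|=d(\tilde\omega_k,\tilde\omega)$; diagonalising $B=\mathrm{diag}(\lambda_\alpha)$ in a basis $\{s_\alpha\}$ defining the embedding for $\tilde\omega_k$ gives the explicit endpoint potential
\[
\tilde\omega=\tilde\omega_k+i\bar\partial\partial v,\qquad e^v=\sum_\alpha e^{2\lambda_\alpha}\,|s_\alpha|^2 .
\]
Now a \emph{single} application of Proposition~\ref{donaldson's estimate} at the base point $\tilde\omega_k$ (where the $C^{r+2}$ hypothesis and the bound on $\|\bar\mu\|_{\mathrm{op}}$ are both available) controls each $\||s_\alpha|^2\|_{C^{r+2}}$ uniformly in $k$; together with $|\lambda_\alpha|\le\|B\|$ and $\sum_\alpha|s_\alpha|^2=1$ this bounds $\|e^v\|_{C^{r+2}}$ in terms of $\|B\|$, hence $\|\tilde\omega-\tilde\omega_k\|_{C^r}$, and the result follows for $\|B\|$ small. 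The key observation you are missing is that the potential $v$ has a closed formula as $H_A$ for a single matrix built from $B$, so one never needs to integrate an infinitesimal estimate along the path or to maintain $C^{r+2}$ control away from the starting point.
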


\begin{proof}
There is a Hermitian matrix $B$ such that $\tilde \omega = e^B \cdot \tilde \omega_k$; note that $d(\tilde \omega_k, \tilde \omega) = \|B\|$. Let $s_\alpha$ be a basis for $H^0(L^k)$ defining the embedding corresponding to $\tilde \omega_k$ and chosen, moreover, so that $B = \mathrm{diag}(\lambda_\alpha)$ is diagonal in this basis. (This can be done thanks to $\U(N+1)$-invariance.) Then $\tilde \omega = \tilde \omega_k +i\delb\del v$ where
$$
e^{v} = \sum e^{2\lambda_\alpha} |s_\alpha|^2.
$$
Because $\tilde \omega_k$ has $R/2$-bounded geometry in $C^{r+2}$ and $\|\bar \mu (\tilde\omega_k) \|_{\mathrm{op}}$ uniformly bounded, Proposition \ref{donaldson's estimate} implies that there is a constant $c$ such that for any~$\alpha$, 
$$
\left\| |s_\alpha|^2 \right\|_{C^{r+2}} < c.
$$
(Apply the result to the matrix $A$ with a single 1 as entry $\alpha$ on the diagonal and zeros elsewhere.) It follows that 
$$
\|e^{v}\|_{C^{r+2}} \leq c e^{2\max |\lambda_\alpha|} \leq ce^{2\|B\|}.
$$
So a bound on $\|B\|$ gives uniform control of $\| \tilde\omega - \tilde \omega_k\|_{C^{r}}$. Hence there is a $C$ such that when $\|B\| = d(\tilde \omega, \tilde \omega_k)<C$, $\tilde \omega$ has $R$-bounded geometry in $C^{r}$. 
\end{proof}

\section{Projective estimates}\label{projective estimates}

Our final task is to control $\| \bar \mu\|_{\mathrm{op}}$. For the first lemma in this direction, we consider the situation from Tian's Theorem \ref{Tian}; so $h$ is a Hermitian metric in $L$ with positive curvature defining a K\"ahler form $\omega$ and $h_k \in \mathcal B_k$ is the sequence of Bergman metrics in $L$ corresponding to an $L^2(h^k, \omega)$-orthonormal basis of $H^0(L^k)$. 

\begin{lemma}\label{asymptotics of operator norm}
$\| \bar \mu(h_k) - 1_k \|_{\mathrm{op}} \to 0$, where $1_k\in i\un(N_k+1)$ is the identity matrix. Moreover, this convergence is uniform in $\omega$ in the sense that there is an integer $s$ such that if $\omega$ runs over a set of metrics bounded in $C^s$ and for which $\omega$ is bounded below, then the convergence is uniform.
\end{lemma}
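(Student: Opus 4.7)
The plan is to expand $\bar\mu(h_k)$ directly in terms of the $L^2(h^k,\omega)$-orthonormal basis $\{s_\alpha\}$ of $H^0(L^k)$ and to exploit the uniform Bergman expansion (Theorem~\ref{asymptotics of Bergman}) to reduce everything to an integral that is morally $\delta_{\alpha\beta}$.

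First I would unpack the definition of $\bar\mu(h_k)$. The Bergman metric $h_k$ in $L^k$ is characterised by $\sum |s_\gamma|^2_{h_k}=1$, equivalently $h_k = h^k/\rho_k(\omega)$, and the associated embedding pulls back $\omega_{\mathrm{FS}}$ to a K\"ahler form in $kc_1(L)$ which by the Bergman asymptotics differs from $k\omega$ by $\tfrac{i}{2}\partial\bar\partial\log\rho_k$. Consequently
\[
\bar\mu_{\alpha\beta}(h_k)
=\int_X\frac{(s_\alpha,s_\beta)_{h^k}}{\rho_k(\omega)}\,\frac{\iota_k^*\omega_{\mathrm{FS}}^n}{n!}
=\int_X(s_\alpha,s_\beta)_{h^k}\,(1+\phi_k)\,\frac{\omega^n}{n!},
\]
where the error function $\phi_k$ comes from two sources: the factor $k^n/\rho_k(\omega)=1+O(k^{-1})$ coming from the fibrewise rescaling, and the ratio $\iota_k^*\omega_{\mathrm{FS}}^n/(k\omega)^n = 1+O(k^{-1})$ coming from the volume form. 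Both estimates are uniform in $\omega$ over $C^s$-bounded sets by the uniformity clause of Theorem~\ref{asymptotics of Bergman}, so $\|\phi_k\|_{L^\infty}\to 0$ uniformly in $\omega$ at rate $O(k^{-1})$.

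The orthonormality assumption gives $\int_X(s_\alpha,s_\beta)_{h^k}\omega^n/n!=\delta_{\alpha\beta}$, so the above identity can be rewritten as $\bar\mu(h_k)=1_k+\epsilon_k$ with
\[
(\epsilon_k)_{\alpha\beta}=\int_X(s_\alpha,s_\beta)_{h^k}\,\phi_k\,\frac{\omega^n}{n!}.
\]
To bound the operator norm I test against a unit vector $v=(v_\alpha)\in\C^{N_k+1}$. The key algebraic observation is that $\sum_{\alpha,\beta}\bar v_\alpha v_\beta(s_\alpha,s_\beta)_{h^k}=|\sigma_v|^2_{h^k}$, where $\sigma_v=\sum_\alpha v_\alpha s_\alpha\in H^0(L^k)$, and that orthonormality then gives $\int_X|\sigma_v|^2_{h^k}\omega^n/n!=|v|^2=1$. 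Therefore
\[
|v^*\epsilon_k v|\le\|\phi_k\|_{L^\infty}\int_X|\sigma_v|^2_{h^k}\,\frac{\omega^n}{n!}=\|\phi_k\|_{L^\infty},
\]
which yields $\|\bar\mu(h_k)-1_k\|_{\mathrm{op}}\le\|\phi_k\|_{L^\infty}=O(k^{-1})$.

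The main obstacle is really just the bookkeeping in the first step: identifying what goes into $\phi_k$ and verifying that each contribution is $O(k^{-1})$ uniformly on $C^s$-bounded families of $\omega$ with a lower volume bound. This is purely a matter of applying Theorem~\ref{asymptotics of Bergman} to $\rho_k(\omega)$ and to $\log\rho_k(\omega)$ (which enters the comparison between $\iota_k^*\omega_{\mathrm{FS}}$ and $k\omega$), both of which are covered by the uniformity statement there. Once that is done, the passage from a uniform sup-norm bound on $\phi_k$ to an operator-norm bound on $\epsilon_k$ is the clean positivity argument above and requires no further input.
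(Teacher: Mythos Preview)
Your proof is correct and follows essentially the same route as the paper: both write $\bar\mu(h_k)-1_k$ as the matrix $\int_X(s_\alpha,s_\beta)\,\phi_k\,\omega^n/n!$ for some real function $\phi_k=O(k^{-1})$ coming from the Bergman expansion, and then bound the operator norm of such a matrix by $\|\phi_k\|_{C^0}$. The only cosmetic difference is in the justification of that last bound: the paper observes that $A_F=\pi\circ M_F\circ j$ factors through multiplication on $L^2$, while you use Hermiticity to reduce to the quadratic form $v^*\epsilon_k v=\int_X|\sigma_v|^2\phi_k$, which is the same inequality read pointwise.
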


\begin{proof}
We borrow another trick we learnt from \cite{donaldson-scape1}. Let $s_\alpha$ be a basis for $H^0(L^k)$ determining $\omega_k$. Given a continuous function $F \colon X \to \R$, set $A_F$ to be the Hermitian matrix with entries 
$$
(A_F)_{\alpha \beta}= \int_X  (s_\alpha, s_\beta) F\,
\frac{\omega^n}{n!}
$$
Then $\|A_F\|_\mathrm{op} \leq \|F\|_{C^0}$. This is because the map $A_F\colon H^0(L^k) \to H^0(L^k)$ factors through the space $V$ of all $L^2$-integrable sections as $A_F = \pi \circ M_F \circ j$ where $j \colon H^0(L^K) \to V$ is the inclusion, $M_F$ is multiplication by $F$ and $\pi \colon V \to H^0(L^k)$ is orthogonal projection in $V$.

We are interested in the matrix
\begin{eqnarray*}
\bar \mu_{\alpha \beta} 
&=& 
\int_X 
	\frac{(s_\alpha, s_\beta)}{\rho_k}
	\frac{\omegaFS^n}{n!}\\
&=&
\int_X (s_\alpha, s_\beta)(1+O(k^{-1}))
	\frac{\omega^n}{n!}
\end{eqnarray*}
where we have used the asymptotic expansion of $\rho_k$ and $\omega = \frac{1}{k}\omegaFS + O(k^{-2})$. So $\bar\mu - 1_k$ is the matrix associated to a function $F_k \colon X \to \C$ with $\| F_k\|_{C^0} = O(k^{-1})$. Hence $\| \bar\mu - 1_k\|_\mathrm{op} = O(k^{-1})$. The convergence is uniform in $\omega$ because the asymptotic expansion of $\rho_k$ is.
\end{proof}

\begin{remark}\label{diagonal}
As with the asymptotics of $\rho_k$ and $Q_k$, the fact the convergence of Lemma \ref{asymptotics of operator norm} is uniform allows us to pass from a single metric $\omega$ to a sequence $\omega(k)$ which converges in $C^\infty$. If we denote by $\omega_k(k) \in \mathcal B_k$ the $k^{\mathrm{th}}$ standard projective approximation to $\omega(k)$ it follows from the uniformity that $\|\bar \mu(\omega_k(k)) - 1_k \|_\mathrm{op}\to 0$
\end{remark}

The remainder of this section is devoted to controlling $\|\bar \mu\|_\mathrm{op}$ in terms of the Riemannian distance in the Bergman space. Our arguments will apply simultaneously to all $\mathcal B_k$ without $k$ playing a role. Accordingly, until the end of the section we drop the reference to $k$ and work on the Bergman space $\mathcal B \cong \GL(N+1)/\U(N+1)$ associated to a given subvariety $X \subset\C\P^N$.

Given a point $b\in \mathcal B$ and tangent vector $A\in T_b\mathcal B \cong i\un(N+1)$, we differentiate  $\bar\mu \colon \mathcal B \to i\un(N+1)$ at $b$ to obtain $\diff\bar\mu(A) \in i\un(N+1)$. The first fact we need---which appears, for example, in \cite{phong-sturm-sefakem}---is the relationship between $\diff \bar \mu(A)$ and the extrinsic geometry of the embedding $X \subset \C\P^N$ corresponding to $b \in \mathcal B$. Let $\xi_A$ denote the vector field on $\C\P^N$ corresponding to $A$. Let $\xi_A^{TX}$ denote the component of $\xi_A |_X$ which is tangent to $X$ and $\xi_A^\perp$ the component which is perpendicular. Finally let $(\cdot, \cdot)$ denote the Fubini--Study inner-product on tangent vectors. 

\begin{lemma}\label{second derivative}
For any pair of Hermitian matrices $A, B \in i\un(N+1)$,
$$
\tr(B\, \diff \bar\mu(A))
=
\int_X (\xi_A^\perp, \xi_B^\perp)\, \frac{\omegaFS^n}{n!}.
$$
\end{lemma}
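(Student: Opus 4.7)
The plan is to differentiate $\bar\mu$ under the integral sign along the one-parameter family of embeddings generated by $A$, then use the complexified moment-map identity together with a tangential/normal decomposition to extract the formula.

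First I would set up the differentiation. The path in $\mathcal B$ tangent to $A$ corresponds to the family of embeddings $\iota_t = e^{tA}\circ\iota$, so $\bar\mu(b_t) = \int_X \iota^*(e^{tA})^*\!\left(\mu\cdot\omegaFS^n/n!\right)$. Differentiating at $t=0$ and pairing with $B$, with $f := \tr(A\mu)$ and $g := \tr(B\mu)$ viewed as functions on $\C\P^N$, the Leibniz rule for the Lie derivative gives
\[
\tr\!\left(B\,\diff\bar\mu(A)\right) = \int_X \iota^*\!\left[\xi_A(g)\cdot\frac{\omegaFS^n}{n!} + g\cdot n\,\mathcal{L}_{\xi_A}\omegaFS\wedge\frac{\omegaFS^{n-1}}{n!}\right].
\]

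Next I would invoke the K\"ahler identities. The fact recalled at the start of \S\ref{BF converges}---that $\xi_A$-deformation of a Fubini--Study embedding produces the K\"ahler potential $\tr(A\mu)$---says precisely that $\mathcal{L}_{\xi_A}\omegaFS = i\delb\del f$ on $\C\P^N$, or equivalently that $\xi_A$ is the K\"ahler gradient of $f$; in particular $\xi_A(g) = dg(\xi_A) = (\xi_A,\xi_B)$ with $(\cdot,\cdot)$ the Fubini--Study inner product. Since $\iota$ is holomorphic, $\iota^*(i\delb\del f) = i\delb\del f_X$ with $f_X := \iota^*f$, and writing $\omega := \iota^*\omegaFS$ and $g_X := \iota^*g$ the identity reduces to
\[
\tr\!\left(B\,\diff\bar\mu(A)\right) = \int_X (\xi_A,\xi_B)\big|_X\,\frac{\omega^n}{n!} + \int_X g_X\cdot n\,i\delb\del f_X\wedge\frac{\omega^{n-1}}{n!}.
\]

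To finish I would decompose $\xi_A|_X = \xi_A^{TX} + \xi_A^\perp$ and use the standard fact that the tangential component of an ambient gradient is the intrinsic gradient: from $df_X(Y) = (\xi_A,Y) = g_\omega(\xi_A^{TX},Y)$ for $Y\in TX$ one reads $\xi_A^{TX} = \nabla_\omega f_X$, and hence $(\xi_A^{TX},\xi_B^{TX}) = g_\omega(\nabla f_X,\nabla g_X)$. Combining the K\"ahler identity $n\,i\delb\del f_X\wedge\omega^{n-1}/n! = -\Delta_\omega f_X\cdot\omega^n/n!$ with integration by parts on the closed manifold $X$ converts the second integral into $-\int_X g_\omega(\nabla f_X,\nabla g_X)\,\omega^n/n!$, which is exactly the negative of the tangential piece of the first integral. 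Using orthogonality of tangent and normal directions on $X$ to cancel the cross terms, what remains is the purely normal contribution $\int_X(\xi_A^\perp,\xi_B^\perp)\,\omega^n/n!$, as required.

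The main technical obstacle is bookkeeping of signs---the complexified moment-map convention and the sign of the K\"ahler Laplacian---which must match so that the tangential terms cancel on the nose. Every other step (pullback commuting with $i\delb\del$ under the holomorphic map $\iota$, the ambient-to-intrinsic gradient identity, and Bochner-style integration by parts on a closed K\"ahler manifold) is routine K\"ahler calculus.
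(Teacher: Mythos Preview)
Your proposal is correct and follows essentially the same route as the paper's proof: differentiate $\bar\mu$ under the integral sign to get a ``$\mu$-term'' plus a ``volume-form term'', identify the first as $\int_X(\xi_A,\xi_B)$ via the moment-map identity, convert the second into $-\int_X(\xi_A^{TX},\xi_B^{TX})$ using $\mathcal L_{\xi_A}\omegaFS = i\delb\del H_A$ together with the K\"ahler identity and integration by parts, and then subtract. The paper's write-up is more compressed (it records the intermediate formula $\int_X\big((\xi_A,\xi_B) - H_B\Delta H_A\big)$ directly), but the ingredients and the logic are the same; your caveat about sign and normalisation bookkeeping is exactly the one point where care is needed.
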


\begin{proof}
\begin{eqnarray*}
\tr(B\, \diff \bar\mu(A)) 
	&=&
		\int_X \tr (B\, \diff \mu(A))\, \frac{\omegaFS^n}{n!}
		+
		\int_X \tr(B \mu ) \frac{L_{\xi_A} (\omegaFS^n)}{n!},\\
	&=&
		\int_X \Big((\xi_A, \xi_B) -H_B \Delta H_A\Big)
			\frac{\omegaFS^n}{n!},\\
	&=&
		\int_X\Big((\xi_A, \xi_B) - (\xi^{TX}_A, \xi^{TX}_B)\Big)
			\frac{\omegaFS^n}{n!}.
\end{eqnarray*}
Here the various equalities all follow from the fact that $-i\mu$ is a moment map for the $\U(N+1)$-action on $\C\P^N$; we have
\begin{eqnarray*}
\tr (B \, \diff \mu (A) ) 
	&=& \omegaFS(J\xi_A, \xi_B),\\
	&=& (\xi_A, \xi_B).\\
L_{\xi_A}\omegaFS 
	&=& 2i\delb\del \left(\tr A\mu\right),\\
	&=& 2i \delb \del H_A.\\
\left(L_{\xi_A}\omegaFS^n\right)|_{X}
	&=& - \Delta H_A \left(\omegaFS^n|_X\right).
\end{eqnarray*}
\end{proof}

We now continue with a series of identities and estimates in projective geometry which provide the pieces needed to control $\| \bar \mu\|_\mathrm{op}$.

\begin{lemma}\label{pointwise equality}
Let $A,B \in i\un(N+1)$ be Hermitian matrices. At every point of $\C\P^N$,
$$
H_A H_B + ( \xi_A, \xi_B ) = \tr (AB\mu).
$$
\end{lemma}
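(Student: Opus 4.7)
The plan is to prove this pointwise by lifting to the sphere $S^{2N+1}\subset \C^{N+1}$. Fix $p \in \C\P^N$ and pick a representative $x \in \C^{N+1}$ with $\|x\|^2 = 1$. Under this lift $\mu(p) = xx^*$ is the rank-one orthogonal projection onto $\C x$, so by direct computation $H_A(p) = x^*Ax$, $H_B(p) = x^*Bx$, and $\tr(AB\mu)(p) = x^* ABx$.

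The key step is to recognise $(\xi_A, \xi_B)(p)$ as a Hermitian inner product of horizontal lifts. Since the Fubini--Study metric is the quotient metric on $S^{2N+1}/\U(1)$, the tangent space $T_p\C\P^N$ is identified with the Hermitian orthogonal complement $(\C x)^\perp\subset\C^{N+1}$ equipped with the restriction of the ambient inner product $\langle u, v \rangle = u^* v$. Under this identification the vector field $\xi_A$ generated by the complexified action of $A$ lifts to the linear field $x \mapsto Ax$, whose component orthogonal to $\C x$ is
$$
Ax - \langle x, Ax\rangle\, x = Ax - H_A(p)\, x.
$$

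With this in hand, the identity reduces to linear algebra. Using $A = A^*$,
\begin{align*}
(\xi_A, \xi_B)(p)
&= \langle Ax - H_A(p)\, x,\ Bx - H_B(p)\, x\rangle \\
&= x^* ABx - H_A(p)\, x^*Bx - H_B(p)\, x^*Ax + H_A(p) H_B(p) \\
&= \tr(AB\mu)(p) - H_A(p)\, H_B(p),
\end{align*}
and rearranging gives the claim. An equivalent route, should one prefer, is to invoke $\U(N+1)$-equivariance to reduce to the single point $p_0 = [1:0:\cdots:0]$, where $\mu(p_0) = e_0 e_0^*$ and the three quantities $H_A, H_B, \tr(AB\mu)$ become $A_{00}, B_{00}, (AB)_{00}$ respectively; the perpendicular components of $\xi_A, \xi_B$ in the affine chart are precisely the truncated columns $(A_{\alpha 0})_{\alpha \geq 1}, (B_{\alpha 0})_{\alpha \geq 1}$ and their Hermitian pairing is $(AB)_{00} - A_{00}B_{00}$.

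The main---and rather mild---obstacle is fixing conventions: because $\tr(AB\mu)$ is complex-valued in general (the product of Hermitian matrices need not be Hermitian), $(\xi_A, \xi_B)$ must be read here as the complex sesquilinear pairing coming from the ambient Hermitian product on $\C^{N+1}$, i.e.\ the natural complex extension of the Fubini--Study inner product to the space of $(1,0)$-type vectors (so that only its real part agrees with the Riemannian pairing used in Lemma \ref{second derivative}). With that understood, the lemma is essentially Pythagoras for the orthogonal decomposition of the lifts $Ax, Bx$ along and perpendicular to the line $\C x$.
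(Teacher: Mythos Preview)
Your proof is correct. The paper's own argument is precisely the ``equivalent route'' you sketch at the end: it invokes $\U(N+1)$-equivariance to reduce to $p=[1:0:\cdots:0]$, computes $H_A=A_{00}$, $\tr(AB\mu)=\sum_{\gamma} A_{0\gamma}B_{\gamma 0}$, observes that $\xi_A=\sum_{j\geq 1}A_{0j}\del_j$ in the standard affine chart with the $\del_j$ orthonormal at $p$, and finishes using $B^*=B$. Your primary argument via horizontal lifts to $S^{2N+1}$ is a coordinate-free repackaging of the same computation---instead of picking a convenient point you pick an arbitrary unit representative $x$ and project $Ax$ onto $(\C x)^\perp$; the payoff is that the identity becomes transparently an orthogonal decomposition, at the cost of a few more lines setting up the quotient picture.

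Your remark about conventions is apt and the paper leaves it implicit: $(\xi_A,\xi_B)$ is indeed the Hermitian extension of the Fubini--Study pairing to $(1,0)$-vectors, which is why the use of $B^*=B$ (to turn $\overline{B_{0j}}$ into $B_{j0}$) is needed in the paper's last line, and why the identity makes sense even though $\tr(AB\mu)$ need not be real. In the downstream applications (Lemmas~\ref{L21 equality}--\ref{infinitesimal control}) one either takes $B=A$ or pairs against real quantities, so the issue does not resurface.
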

\begin{proof}
By $\U(N+1)$ equivariance, it suffices to consider the point $p = [1\colon 0\colon \cdots \colon 0]$. Let $(x_1, \ldots, x_N) \mapsto [1\colon x_1 \colon \cdots \colon x_N]$ be unitary coord\-inates. At $p$, $\mu(p)$ has a single non-zero entry which is a one in the top left corner. Hence $H_A(p) = A_{00}$, $H_B(p) = B_{00}$ and 
$$
\tr (AB\mu) = A_{00}B_{00}+A_{01}B_{10} + \cdots + A_{0N}B_{N0}.
$$
Meanwhile, at $p$, the coordinate vectors $\del_i$ are orthonormal whilst $\xi_A = A_{01} \del_1 + \cdots + A_{0N} \del_N$ and similarly for $\xi_B$. Putting the pieces together and using $B^*=B$ gives the result.
\end{proof}

\begin{remark}
As an aside, it is interesting to compare this result with the analytic estimate in Proposition \ref{donaldson's estimate}. It follows from Lemma \ref{pointwise equality} that for any $A$, at every point of $X$,
\begin{eqnarray*}
H_A^2 + |\nabla H_A|^2
	&=&
		H_A^2 + |\xi_A^{TX}|^2,\\
	&\leq&
		H_A^2 + |\xi_A|^2,\\
	&=&
		\tr (A^2\mu),\\
	&\leq&
		\|A\|^2.
\end{eqnarray*}
So the $C^1$ case of Proposition \ref{donaldson's estimate}, $\|H_A\|_{C^1} \leq \|A\|$, comes ``for free'' from projective geometry with no need to use analysis (and with no need to involve $\| \bar\mu\|_{\mathrm{op}}$ in the bound). 
\end{remark}

\begin{lemma}\label{L21 equality}
For any Hermitian matrices $A, B\in i \un(N+1)$, 
$$
\tr(B \, \diff \bar\mu(A)) + \langle H_A, H_B \rangle_{L^2_1(X)}
=
\tr(AB\bar \mu).
$$ 
\end{lemma}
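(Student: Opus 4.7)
The plan is to deduce this lemma by combining the two preceding results (Lemmas \ref{second derivative} and \ref{pointwise equality}) with the standard decomposition of $\xi_A|_X$ into tangential and normal parts.

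First I would integrate the pointwise identity of Lemma \ref{pointwise equality} over $X$ against the Fubini--Study volume form. This gives
$$
\int_X H_A H_B\,\frac{\omegaFS^n}{n!} + \int_X (\xi_A,\xi_B)\,\frac{\omegaFS^n}{n!} = \int_X \tr(AB\mu)\,\frac{\omegaFS^n}{n!} = \tr(AB\bar\mu),
$$
where the last equality uses the definition $\bar\mu = \int_X \mu\,\omegaFS^n/n!$ and linearity of the trace.

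Next I would split the middle integrand using the orthogonal decomposition $\xi_A|_X = \xi_A^{TX} + \xi_A^{\perp}$, so that $(\xi_A,\xi_B) = (\xi_A^{TX},\xi_B^{TX}) + (\xi_A^{\perp},\xi_B^{\perp})$. By Lemma \ref{second derivative}, the normal part contributes exactly $\tr(B\,\diff\bar\mu(A))$. For the tangential part, I would use the fact that $\xi_A$ is the Hamiltonian vector field on $\C\P^N$ for $H_A = \tr(A\mu)$ (the defining property of the moment map $-i\mu$). Restricting to $X$, the Hamiltonian vector field of $H_A|_X$ with respect to $\omegaFS|_X$ is precisely $\xi_A^{TX}$, and so $(\xi_A^{TX},\xi_B^{TX}) = (\nabla H_A,\nabla H_B)$ pointwise on $X$.

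Putting the three ingredients together,
$$
\int_X H_A H_B\,\frac{\omegaFS^n}{n!} + \int_X (\nabla H_A,\nabla H_B)\,\frac{\omegaFS^n}{n!} + \tr(B\,\diff\bar\mu(A)) = \tr(AB\bar\mu),
$$
and the first two terms combine to $\langle H_A,H_B\rangle_{L^2_1(X)}$, yielding the stated identity. There is no real obstacle here; the only subtlety is the sign/normalisation in the identification $(\xi_A^{TX},\xi_B^{TX}) = (\nabla H_A,\nabla H_B)$, which is ultimately just the observation that a Hamiltonian vector field has the same pointwise norm as the gradient of its Hamiltonian on a K\"ahler manifold.
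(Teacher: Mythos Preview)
Your proposal is correct and follows essentially the same route as the paper: decompose the pointwise identity of Lemma~\ref{pointwise equality} along $X$ into tangential and normal parts, invoke Lemma~\ref{second derivative} for the normal piece, and use $\xi_A^{TX}=\nabla H_A$ (equivalently, your Hamiltonian-vector-field observation) for the tangential piece before integrating. The only cosmetic difference is that the paper splits first and integrates second, whereas you integrate first and split second.
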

\begin{proof}
From Lemma \ref{pointwise equality} we have, at every point of $X$,
$$
H_A H_B + (\xi_A^{TX}, \xi_B^{TX}) + (\xi_A^\perp, \xi_B^\perp) = \tr (AB\mu)
$$
Now use the identity $\xi_A^{TX} = \nabla H_A$, integrate over $X$ and apply Lemma \ref{second derivative}.
\end{proof}

\begin{remark}
This identity fits into Donaldson's ``double quotient'' picture described in \S 2.1 of \cite{donaldson-scape1}; we explain this here, although we make no direct use of this observation later. Donaldson considers the infinite dimensional space $\mathcal X = \Gamma (L^k)^{N_k+1}$ (where $\Gamma$ denotes \emph{smooth} sections). Given a Hermitian metric $h$ in $L$ with positive curvature $2\pi i \omega$, $\mathcal X$ is formally a K\"ahler manifold, where the Riemannian metric is given by the $L^2(h^k, \omega)$-inner-product on sections of $L^k$ along and the complex structure is given by multiplication of sections by $i$. Two groups act on $\mathcal X$, one finite-dimensional the other infinite-dimensional. The finite dimensional  group is $\GL(N_k+1)$ which acts on $\mathcal X$ by K\"ahler isometries, mixing the $\Gamma(L^k)$ factors in the obvious way. The infinite dimensional group is the group $\mathcal G$ of Hermitian bundle maps $L \to L$ which preserve the Chern connection of $h$; this acts preserving the $L^2$-inner-product on $\Gamma (L^k)$ and hence by it acts by K\"ahler isometries on $\mathcal X$. Whilst the complexification of $\mathcal G$ doesn't exist, one can still make sense of the complex ``orbits'' in $\mathcal X$. 

Assume that $(h, \omega)$ come from a projective embedding defined via a basis $\underline s = (s_0, \ldots s_{N_k}) \in \mathcal X$. Given a Hermitian matrix $A \in i\un(N_k+1)$, we get an infinitesimal change in the basis $\underline s$, i.e., a tangent vector $V_A \in T_{\underline s} \mathcal X$. Let $ P \subset T_{\underline s} \mathcal X$ denote the tangent space to the complex ``orbit'' through $\underline s$. We can decompose $V_A$ into two components, the part $V'_A$ which is in $P$ and the part $V''_A$ which is orthogonal. Doing likewise for a second Hermitian matrix $B$, we have the obvious identity:
$$
(V_A, V_B) = (V'_A, V'_B) + (V''_A, V''_B).
$$

Proposition 19 in \cite{donaldson-scape1} gives $V'_A$ explicitly in terms of $H_A$ and using this one can write out the terms in this identity giving
\begin{eqnarray*}
(V_A, V_B) &=& \tr(AB\bar \mu),\\
(V'_A, V'_B) &=& \langle H_A, H_B \rangle_{L^2_1(X)},\\
(V''_A, V_B'') &=& \tr(B \, \diff \bar\mu(A)).
\end{eqnarray*}
So Lemma \ref{L21 equality} amounts to the orthogonal decomposition $T_{\underline s}\mathcal X = P \oplus P^\perp$. Meanwhile, the equality $(V''_A, V''_B) = \tr (B \, \diff \bar\mu(A))$ is a consequence of the fact that balancing flow is the downward gradient flow of the Kempf--Ness function associated to the finite dimensional moment map problem that remains after taking the symplectic reduction by  the action of $\mathcal G$.
\end{remark}

\begin{lemma}\label{L21 estimate}
For any Hermitian matrix $A \in i\un(N+1)$, 
$$
\| H_A \|^2_{L^2_1} \leq \|A\|^2 \|\bar\mu\|_{\mathrm{op}}.
$$
\end{lemma}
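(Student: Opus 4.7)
The plan is to specialise the identity in Lemma \ref{L21 equality} to the diagonal case $B=A$ and then control the remaining terms, one by positivity and one by the matrix inequality already used in the proof of Proposition \ref{donaldson's estimate}.

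Setting $B=A$ in Lemma \ref{L21 equality} gives
$$
\tr(A\,\diff \bar\mu(A)) + \|H_A\|^2_{L^2_1} = \tr(A^2 \bar\mu).
$$
The first term on the left is non-negative: by Lemma \ref{second derivative},
$$
\tr(A\,\diff \bar\mu(A)) = \int_X |\xi_A^\perp|^2 \, \frac{\omegaFS^n}{n!} \geq 0,
$$
so that $\|H_A\|^2_{L^2_1} \leq \tr(A^2 \bar\mu)$.

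It remains to bound the right-hand side by $\|A\|^2 \|\bar\mu\|_\mathrm{op}$. Using cyclicity of trace, $\tr(A^2 \bar\mu) = \tr(A \bar\mu A)$, and then applying the standard matrix inequality (\ref{matrix inequality}) with $F = A$ and $G = \bar\mu$ (both Hermitian, and indeed $\bar\mu$ is positive semidefinite as the integral of the rank-one projectors $\mu$) gives
$$
\tr(A\bar\mu A) \leq \|A\|^2 \|\bar\mu\|_\mathrm{op}.
$$
Combining these two steps yields the desired estimate.

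There is no serious obstacle here: both ingredients are already in hand earlier in the section, and the argument is essentially a one-line consequence of Lemma \ref{L21 equality} once one notices that the $\diff \bar\mu$ term has the right sign to be discarded.
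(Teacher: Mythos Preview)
Your proof is correct and is essentially identical to the paper's own argument: specialise Lemma~\ref{L21 equality} to $B=A$, drop the nonnegative term $\tr(A\,\diff\bar\mu(A)) = \int_X |\xi_A^\perp|^2$ via Lemma~\ref{second derivative}, and bound $\tr(A^2\bar\mu)$ using inequality~(\ref{matrix inequality}). The only cosmetic difference is that you explicitly invoke cyclicity of trace to rewrite $\tr(A^2\bar\mu)$ as $\tr(A\bar\mu A)$ before applying the matrix inequality, which the paper leaves implicit.
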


\begin{proof}
It follows from  Lemma \ref{L21 equality}  that 
$$
\|H_A\|^2_{L^2_1} = \tr(A^2 \bar\mu) - \tr (A \, \diff\bar\mu(A)).
$$
From Lemma \ref{second derivative},
$$
\tr (A \, \diff\bar\mu(A)) = \int_X |\xi_A^\perp|^2\frac{\omegaFS^n}{n!} >0.
$$ 
Hence
$$
\|H_A\|^2_{L^2_1} \leq \tr(A^2 \bar\mu) \leq \|A\|^2 \|\bar\mu\|_{\mathrm{op}},
$$
where the second inequality follows from inequality (\ref{matrix inequality}).
\end{proof}
 
\begin{lemma}\label{infinitesimal control}
For any Hermitian matrix $A \in i\un(N+1)$,
$$
\left\| \diff \bar\mu(A)\right\|
\leq 
2 \|A\| \|\bar\mu\|_\mathrm{op}.
$$
\end{lemma}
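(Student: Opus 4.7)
The plan is to apply Lemma \ref{L21 equality} with the second matrix equal to $\diff\bar\mu(A)$ itself, and then bound each of the two terms that appear on the right-hand side by $\|A\| \|\diff\bar\mu(A)\| \|\bar\mu\|_\mathrm{op}$. Concretely, set $B := \diff\bar\mu(A)$, which is Hermitian because $\bar\mu$ is Hermitian-matrix valued on $\mathcal B$. Then $\|\diff\bar\mu(A)\|^2 = \tr(B^2) = \tr(B\,\diff\bar\mu(A))$, and Lemma \ref{L21 equality} rewrites this as
$$
\|B\|^2 = \tr(AB\bar\mu) - \langle H_A, H_B \rangle_{L^2_1}.
$$
The whole proof will amount to bounding each of these two terms by $\|A\| \|B\| \|\bar\mu\|_\mathrm{op}$.

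For the inner-product term, Cauchy--Schwarz gives $|\langle H_A,H_B\rangle_{L^2_1}| \leq \|H_A\|_{L^2_1}\|H_B\|_{L^2_1}$, which by Lemma \ref{L21 estimate} is at most $\|A\|\|B\|\|\bar\mu\|_\mathrm{op}$. For the trace term, I would use that $\bar\mu$ is positive semidefinite, since it is the integral of $\mu$, which is a (nonnegative multiple of a) rank-one orthogonal projection at every point. Writing $\bar\mu = C^*C$, the Hilbert--Schmidt form of Cauchy--Schwarz gives
$$
|\tr(AB\bar\mu)| = |\tr\bigl((CA)(BC^*)\bigr)| \leq \|CA\|_{HS}\|BC^*\|_{HS},
$$
and then $\|CA\|_{HS}^2 = \tr(A\bar\mu A) = \tr(A\bar\mu A)$, which by the matrix inequality (\ref{matrix inequality}) is at most $\|A\|^2\|\bar\mu\|_\mathrm{op}$. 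The symmetric bound applies to $\|BC^*\|_{HS}$, giving $|\tr(AB\bar\mu)| \leq \|A\|\|B\|\|\bar\mu\|_\mathrm{op}$.

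Combining the two estimates yields $\|B\|^2 \leq 2\|A\|\|B\|\|\bar\mu\|_\mathrm{op}$, from which the desired inequality $\|\diff\bar\mu(A)\| \leq 2\|A\|\|\bar\mu\|_\mathrm{op}$ follows immediately. The only step requiring genuine thought is the bound on $|\tr(AB\bar\mu)|$: the matrix inequality (\ref{matrix inequality}) as stated applies directly only when the matrix sandwiched between two copies of a single Hermitian matrix is itself Hermitian, so the trick is to factor $\bar\mu$ using its positivity and reshuffle the trace so that inequality (\ref{matrix inequality}) can be invoked symmetrically in $A$ and $B$. Everything else is a direct consequence of Lemmas \ref{L21 equality} and \ref{L21 estimate} already established.
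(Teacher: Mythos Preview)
Your proof is correct and follows the same route as the paper: set $B=\diff\bar\mu(A)$, apply Lemma~\ref{L21 equality}, and bound each of the two resulting terms by $\|A\|\,\|B\|\,\|\bar\mu\|_{\mathrm{op}}$ via Cauchy--Schwarz, Lemma~\ref{L21 estimate}, and the matrix inequality~(\ref{matrix inequality}). Your treatment of the trace term $\tr(AB\bar\mu)$---factoring $\bar\mu=C^*C$ via positivity and using the Hilbert--Schmidt Cauchy--Schwarz to reduce to two applications of~(\ref{matrix inequality})---is more explicit than the paper, which simply cites~(\ref{matrix inequality}) without spelling out how to get from $\tr(AB\bar\mu)$ to an $FGF$ form, but the substance is the same.
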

\begin{proof}
From Lemma \ref{L21 equality} with $B = \diff \bar\mu(A)$, 
$$
\left\| \diff \bar\mu(A)\right\|^2
=
\tr\left( \diff\bar\mu(A)^2\right)
=
\tr (A\, \diff\bar\mu(A)\bar\mu)
-
\langle H_A, H_{\diff \bar\mu(A)} \rangle _{L^2_1}.
$$
Now apply Cauchy--Schwarz, Lemma \ref{L21 estimate} and inequality (\ref{matrix inequality}) to deduce
$$
\|\diff \bar\mu(A)\|^2
\leq
2\| A\| \|\diff\bar\mu(A)\| \|\bar\mu\|_{\mathrm{op}}.
$$
\end{proof}

Finally, we are in a position to control $\|\bar\mu\|_\mathrm{op}$ in terms of Riemannian distance on $\mathcal B$.

\begin{proposition}\label{global control}
Let $b_0 , b_1 \in \mathcal B$ and let $d(b_0, b_1)$ denote the Riemannian distance between $b_0$ and $b_1$. Then
$$
\left\| \bar\mu(b_1)\right\|_{\mathrm{op}}
\leq
e^{2d(b_0, b_1)} \left\| \bar\mu(b_0)\right\|_{\mathrm{op}}.
$$
\end{proposition}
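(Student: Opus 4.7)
The plan is to connect $b_0$ to $b_1$ by a unit-speed geodesic $b(s)$ for $s \in [0,L]$, where $L = d(b_0,b_1)$, and to derive a differential inequality for the function $\nu(s) := \|\bar\mu(b(s))\|_{\mathrm{op}}$ from Lemma \ref{infinitesimal control}, then integrate it via Gr\"onwall.

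First I would observe that at each point of $\C\P^N$ the matrix $\mu$ is rank-one positive semidefinite (its entries are $x_\alpha \bar x_\beta / \sum|x_\gamma|^2$), so the integrated matrix $\bar\mu$ is also positive semidefinite at every point of $\mathcal B$. Hence
$$
\|\bar\mu(b)\|_{\mathrm{op}} = \sup_{\|v\|=1}\langle \bar\mu(b)v,v\rangle.
$$
Along the geodesic, let $A(s)$ denote the velocity, so $\|A(s)\|\equiv 1$, and for a fixed unit vector $v\in\C^{N+1}$ set $g_v(s)=\langle\bar\mu(b(s))v,v\rangle$. Then $g_v'(s)=\langle (\diff\bar\mu)_{b(s)}(A(s))v,v\rangle$, and since $(\diff\bar\mu)_{b(s)}(A(s))$ is Hermitian, Cauchy--Schwarz together with the fact that operator norm is dominated by Killing/Frobenius norm yields
$$
|g_v'(s)| \leq \|(\diff\bar\mu)_{b(s)}(A(s))\| \leq 2\|A(s)\|\,\nu(s) = 2\nu(s),
$$
where the middle inequality is Lemma \ref{infinitesimal control}.

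To turn this pointwise bound on the derivatives of the functions $g_v$ into a bound on $\nu$ itself, I would use the upper Dini derivative. For $h>0$, pick $v_h$ with $\|v_h\|=1$ attaining $\nu(s+h)=g_{v_h}(s+h)$; then $g_{v_h}(s)\leq\nu(s)$, so
$$
\nu(s+h)-\nu(s) \leq g_{v_h}(s+h)-g_{v_h}(s) = \int_s^{s+h} g_{v_h}'(\tau)\,d\tau \leq 2h\max_{\tau\in[s,s+h]}\nu(\tau).
$$
Since $\nu$ is Lipschitz (being the composition of a smooth map $\mathcal B\to i\un(N+1)$ with the operator norm), letting $h\to 0^+$ gives the one-sided differential inequality $D^+\nu(s)\leq 2\nu(s)$ on $[0,L]$.

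Finally, Gr\"onwall's inequality for the upper Dini derivative yields $\nu(s)\leq e^{2s}\nu(0)$, and evaluating at $s=L=d(b_0,b_1)$ gives the stated estimate. The only real subtlety is that $\nu$ is not smooth, which is the main reason the Dini-derivative formulation is needed; apart from this, everything is a direct consequence of Lemma \ref{infinitesimal control} and the positivity of $\mu$.
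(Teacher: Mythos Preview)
Your argument is correct and follows the same route as the paper's proof: connect $b_0$ and $b_1$ by a geodesic, use Lemma \ref{infinitesimal control} together with $\|\cdot\|_{\mathrm{op}}\leq\|\cdot\|$ to bound the rate of change of $\|\bar\mu\|_{\mathrm{op}}$, and integrate the resulting differential inequality. The paper states this in two lines and glosses over the non-differentiability of the operator norm; your use of positivity of $\bar\mu$ and the Dini-derivative/Gr\"onwall argument simply supplies the details the paper omits.
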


\begin{proof}
Let $A$ generate the geodesic $e^{tA}$ in $\mathcal B$ joining $b_0$ and $b_1$ so that $\|A\| = d(b_0, b_1)$. As we run along the geodesic from $b_0$ to $b_1$, the rate of change of $\|\bar\mu\|_\mathrm{op}$ is at most 
$$
\|\diff\bar\mu(A)\|_\mathrm{op} 
\leq
\| \diff \bar\mu (A) \|
\leq 
2\| A\| \| \bar\mu\|_{\mathrm{op}},
$$
by Lemma \ref{infinitesimal control} and so the growth is sub-exponential.
\end{proof}

\section{Completing the proof of Theorem \ref{main result 2}}\label{completing the proof} 
 
Now, finally, all the pieces are in place to prove our main result. 
We begin by recalling our notation. Let $h$ be a Hermitian metric in $L$ with positive curvature $2\pi i \omega$; denote by $\omega(t)$ the Calabi flow starting at $\omega$. Let $\iota_k$ be the embedding of $X$ defined by a basis of $H^0(L^k)$ which is orthonormal with respect to the $L^2$ inner product defined by $h(0)$ and $\omega(0)$. Let $\omega_k = \frac{1}{k}\iota_k^*\omegaFS$ denote the standard sequence of projective approximations to $\omega(0)$. let $\iota_k(t)$ solve the balancing flow (\ref{balancing flow}) and let $\omega_k(t)= \frac{1}{k}\iota_k(t)^*\omegaFS$. We must prove first that $\omega_k(t) \to \omega(t)$ in $C^\infty$.

Recall that in Theorem \ref{higher order approximation}, for any given integer $m$ we constructed a sequence of flows $\omega(k;t)$ which satisfies:
\begin{enumerate}
\item 
$\omega(k;t) \to \omega(t)$ in $C^\infty$ as $k \to \infty$;
\item
$d_k (h_k(t), h'_k(t) ) \leq Ck^{-m-1}$ where $h_k(t)$ is the balancing flow, $h'_k(t)$ denotes the $k^\mathrm{th}$ standard projective approximation to $\omega(k;t)$ and $d_k$ is the scaled Riemannian distance function on $\mathcal B_k$ corresponding to the Killing form $k^{-n-2} \tr A^2$ on Hermitian matrices.
\end{enumerate}

Let $\omega'_k(t) \in c_1(L)$ denote the (rescaled) K\"ahler form corresponding to $h'_k(t)$. It follows from point 1 above and the uniformity of the asymptotic expansion of $\rho_k$ that $\omega'_k(t) \to \omega(t)$ in $C^\infty$. We will show $\omega_k(t) \to \omega(t)$ by proving that 
$$
\|\omega_k(t) - \omega'_k(t) \|_{C^r(\omega(t))} \to 0
$$ 
as $k \to \infty$. To do this we will control the $C^r$ norm on metrics by the distance $d_k$ in $\mathcal B_k$ along the geodesics joining $h'_k(t)$ to $h_k(t)$. We can do this by Lemma~\ref{controlling norm by distance} provided all the points have $R$-bounded geometry in $C^{r+2}$ and all have $\|\bar\mu\|_{\mathrm{op}}$ uniformly controlled as well. 

We begin with the control of $\|\bar\mu\|_\mathrm{op}$. By Lemma \ref{asymptotics of operator norm} and Remark \ref{diagonal}, $\| \bar\mu\|_\mathrm{op}$ is controlled for $h'_k(t)$ uniformly in $k$. Now we apply Proposition \ref{global control}, for which we need $h_k(t)$ to be a uniformly bounded distance from $h'_k(t)$ in the \emph{unscaled} distance $d = k^{n+2} d_k$. Provided we take $m \geq n+1$ this holds giving that $\| \bar\mu \|_\mathrm{op}$ is uniformly bounded along the geodesics joining $h'_k(t)$ to $h_k(t)$.

Next, we establish that the points of these geodesics have $R$-bounded geometry in $C^{r+2}$. We use $\omega(t)$ as our reference metric and apply Lemma \ref{finding R bounded metrics} to the sequence $\omega'_k(t)$ (but with $r$ replaced by $r+2$). We have already observed that the part of the hypothesis concerning $\| \bar\mu\|_\mathrm{op}$ is satisfied and the metrics $\omega'_k(t)$ certainly have $R/2$-bounded geometry in $C^{r+4}$, since they converge in $C^\infty$ to  $\omega(t)$. Now, provided we take $m \geq n+2$, the \emph{unscaled} distance $d (h'_k(t), h_k(t))$ tends to zero and so, for sufficiently large $k$, all points on the geodesics joining $h'_k(t)$ and $h_k(t)$ have $R$-bounded geometry in $C^{r+2}$.

Finally, we can apply Corollary \ref{controlling norm by distance}. This tells us that for the \emph{unscaled} metrics there is some constant $M$ such that
$$
\|k \omega_k(t) - k\omega'_k(t) \|_{C^r(k\omega(t))}
\leq
M k^{n+2}d_k(h_k(t),h'_k(t))
\leq
MC k^{n+1 - m}
$$
Rescaling this inequality we see that
$$
\| \omega_k(t) - \omega'_k(t) \|_{C^r(\omega(t))}
\leq
MC k^{(r/2) + 1+ n - m}.
$$
So provided we take $m > \frac{r}{2} + 1 +n$ we obtain that $\omega_k(t)$ converges to $\omega(t)$ in $C^r$.

For $t\in [0,T]$, $\{\omega(t)\}$  is a compact set of metrics; from here it is easy to check that the convergence $\omega_k(t) \to \omega(t)$ is uniform in $t$. There are various places where uniformity must be checked. Firstly, we have used $\omega(t)$ to define the $C^r$-norms, but the compactness ensures all these norms are uniformly equivalent. Secondly, we have applied asymptotic expansions for $\rho_k$ and $Q_k$, but these are both uniform as $t$ varies, by uniformity in the relevant Theorems \ref{asymptotics of Bergman}, \ref{Liu-Ma} and \ref{converge in Cinfty}. We also must check that $\omega(k;t)$ converges uniformly to $\omega(t)$. This holds since, for given $r$, there are only finitely many perturbations $\eta_j$ present. Finally, if we denote by $\dot \phi_k$ the potential for the $t$-derivative of $\omega_k(t)$, it follows from Theorem \ref{limit of balancing potentials} that $\dot \phi_k(t) \to S(\omega(t)) - \bar S$ in $C^\infty$. This is uniform for $t$ in a compact interval, again by the uniformity of the asymptotics in Theorems \ref{asymptotics of Bergman}, \ref{Liu-Ma} and \ref{converge in Cinfty}.

\appendix

\section*{Appendix: Asymptotics of the operators $Q_k$
\footnote{Written by Kefeng Liu and Xiaonan Ma}}
\label{appendix}

This note is a continuation of \cite{liu-ma-arosnricdg} providing a technical result needed in the preceding article of Fine. We refer to \cite{donaldson-snricdg}, \cite{liu-ma-arosnricdg} and Fine's paper for the context of the problem. We also refer the readers
to the recent book \cite{MM06a} for more information on the Bergman kernel.

We begin by recalling the basic setting and notation in \cite{liu-ma-arosnricdg},
which we will use freely throughout.

Let $(X,\omega, J)$ be a compact K{\"a}hler manifold with
$\dim_{\C}X=n$, and let $(L, h^L)$ be a holomorphic Hermitian line
bundle on $X$. Let $\nabla ^L$ be the holomorphic Hermitian
connection on $(L,h^L)$ with curvature $R^L$. We assume that
 \begin{align} \label{n1}
\frac{\sqrt{-1}}{2 \pi} R^L=\omega.
\end{align}

Let $g^{TX}(\cdot,\cdot):= \omega(\cdot,J\cdot)$ be the Riemannian metric on $TX$
induced by $\omega, J$. Let $dv_X$ be the Riemannian volume
form of $(TX, g^{TX})$, then $dv_X= \omega^n/n!$.
Let $d\nu$ be any volume form on $X$.
Let $\eta$ be the positive function on $X$ defined by
 \begin{align} \label{n3}
dv_X= \eta\, d\nu.
\end{align}
The $L^2$--scalar product $\langle \quad\rangle_\nu$ on $C^\infty (X,L^p)$,
the space of smooth sections of $L^p$, is given by
\begin{align}\label{n2}
&\langle \sigma_1,\sigma_2 \rangle_\nu
:=\int_X\langle \sigma_1(x),\sigma_2(x)\rangle_{h^{L^p}}\,d\nu(x)\,.
\end{align}

Let $P_{\nu, p}(x,x')$ $(x,x'\in X)$ be the smooth kernel of
the orthogonal projection from
$(C ^\infty(X, L^p),\langle\quad \rangle_\nu )$
onto $H^0(X,L^p)$, the space of the holomorphic sections of $L^p$ on $X$,
 with respect to $d\nu (x')$.
Following \cite[\S 4]{donaldson-snricdg}, set
\begin{align}\label{1n1}
K_p(x,x'):= |P_{\nu, p}(x,x')|^2_{h^{L^p}_x\otimes h^{L^{p*}}_{x'}}, \quad
R_p:= (\dim H^0(X, L^p))/ {\rm Vol}(X,\nu),
\end{align}
here ${\rm Vol}(X,\nu):= \int_X d\nu$. Set ${\rm Vol}(X,dv_X):= \int_X dv_X$.

Let $Q_{K_p}$ be the integral operator associated to $K_p$
which is defined for $f\in C^\infty (X)$,
\begin{equation}\label{1n2}
Q_{K_p} (f)(x):= \frac{1}{R_p} \int_X K_p(x,y) f(y)d\nu(y).
\end{equation}

Let $\Delta$ be the (positive) Laplace operator on $(X, g^{TX})$
acting on the functions on $X$.
We denote by $|\quad|_{L^2}$ the $L^2$-norm on the function
on $X$ with respect to $dv_X$.

The following result is an improvement of \cite[Theorem 0.1]{liu-ma-arosnricdg},
 where it is proved for $q=0$ and $q=1$.
\begin{theorem} \label{nt1} For and $q\in \N$, 
there exists a constant $C>0$ such that for any
$f\in C^\infty (X)$, $p\in \N$,
\begin{align}\label{1n4}
\begin{split}
&  \left|\left( \Big(\frac{\Delta}{p}\Big)^q Q_{K_p}
- \frac{{\rm Vol}(X, \nu)}{{\rm Vol}(X, dv_X)}
\Big(\frac{\Delta}{p}\Big)^q\eta \exp\Big(-\frac{\Delta}{4\pi p}\Big)\right) f\right|_{L^2}
\leqslant \frac{C}{p}    \left|f\right|_{L^2}.
\end{split}
\end{align}
Moreover, \eqref{1n4} is uniform in that there is an integer $s$
such that if all data $h^L$, $d\nu$ run over a set which is
bounded in $C^s$-topology and that $g^{TX}$, $dv_X$ are bounded from
below, then the constant $C$ is independent of $h^L$, $d\nu$.
\end{theorem}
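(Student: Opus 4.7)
The plan is to follow the general strategy of Liu--Ma \cite{liu-ma-arosnricdg}, which established the cases $q=0$ and $q=1$, and to extend it by a careful analysis of how high powers of $\Delta/p$ interact with the near-diagonal asymptotic expansion of the Bergman kernel $P_{\nu,p}$. The key technical input is the off-diagonal expansion proved in \cite{MM06a}: in local normal coordinates $Z$ around a point $x_0$, the rescaled kernel $p^{-n}P_{\nu,p}(Z/\sqrt{p}, Z'/\sqrt{p})$ admits an asymptotic expansion in powers of $p^{-1/2}$ whose leading term is (a multiple of) the Bargmann--Fock kernel and whose remainders enjoy Gaussian decay in $|Z-Z'|$. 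Each derivative in either variable costs at most a factor $\sqrt{p}$ while preserving this decay, and every estimate is uniform under the hypothesis on the data in a suitable $C^s$-topology.

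The main step is to apply $(\Delta_x/p)^q$ to the kernel $R_p^{-1} K_p(x,y) = R_p^{-1}|P_{\nu,p}(x,y)|^2$ and analyse the resulting integral operator. After rescaling $Z = \sqrt{p}(y-x)$, each factor of $\Delta_x/p$ matches the scaling of the Bargmann--Fock model and yields a well-defined Gaussian integral on $\C^n$. Combining this with a Taylor expansion of $f$ around $x$, one identifies the leading contribution of $[(\Delta/p)^q Q_{K_p} f](x)$ as
\[
\frac{\mathrm{Vol}(X,\nu)}{\mathrm{Vol}(X, dv_X)}\left[\left(\frac{\Delta}{p}\right)^q \eta \exp\left(-\frac{\Delta}{4\pi p}\right) f\right](x),
\]
with a remainder of order $p^{-1}|f|_{L^2}$. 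This works because for any polynomial $P(\partial_Z)$ of fixed degree, the Gaussian integral $\int_{\C^n} P(\partial_Z) e^{-\pi|Z|^2} Z^{\alpha}\, dZ$ is finite and matches the corresponding Taylor coefficient of the Mehler-type kernel $\exp(-\Delta/(4\pi p))$; this is essentially the same identification used by Liu--Ma for the $q=1$ case, now iterated $q$ times.

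The main obstacle is controlling the error uniformly: naively, applying $\Delta^q$ produces growth of order $p^q$ pointwise, so the cancellation with the prefactor $p^{-q}$ must be exact at the level of the leading term, and every subleading piece of the Bergman expansion, together with all its derivatives up to order $2q$, must be shown to contribute only $O(p^{-1})$ after integration against $f$. Here the Gaussian concentration at scale $p^{-1/2}$ is crucial: it permits Schur's test to bound the $L^2$ operator norm of each error term, and the uniformity in the $C^s$-topology of $h^L$ and $d\nu$ is then inherited directly from the corresponding uniformity of the Bergman expansion in \cite{MM06a}. To keep the combinatorics manageable, I would organise the argument by induction on $q$: assuming the claim up to $q-1$, one need only analyse a single additional application of $\Delta/p$, which by the rescaling argument above changes the leading term by precisely the expected factor and contributes at most $O(p^{-1})$ to the $L^2$ error.
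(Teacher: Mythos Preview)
Your proposal uses the same main ingredients as the paper---the off-diagonal Bergman expansion in rescaled normal coordinates from \cite{MM06a} and Gaussian decay to control remainders---but differs in organisation and omits one genuine step.

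The paper does \emph{not} argue by induction on $q$. It computes $\Delta_Z^q e^{-\pi p|Z-Z'|^2}$ directly by writing $\Delta_Z = \Delta_0 + (\text{terms which are } O(|Z|))$ in normal coordinates, so that the leading piece is $(\Delta_0^q e^{-\pi p|Z-Z'|^2})|_{Z=0}$ and the correction is a polynomial of degree $\leq 2q-2$ in $\sqrt{p}$ times a Gaussian. The identification with the heat operator then comes from the exact identity
\[
p^n\bigl(\Delta_0^q e^{-\pi p|Z-Z'|^2}\bigr)\Big|_{Z=0}
=(-1)^q\Bigl(\tfrac{\partial^q}{\partial u^q}e^{-u\Delta_0}\Bigr)\Big|_{u=1/(4\pi p)}(0,Z'),
\]
matched against the small-time asymptotics of $e^{-u\Delta}$ on $X$. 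Your induction could reach the same conclusion, but the paper's direct route makes clear why the limiting operator is a $u$-derivative of the heat semigroup and avoids having to track how one further application of $\Delta/p$ interacts with the already-identified heat operator term.

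The real gap is your treatment of the factor $\eta$. Since $Q_{K_p}f(x)=R_p^{-1}\eta(x)\int K_{\omega,p}(x,y)f(y)\,dv_X(y)$, applying $\Delta^q$ generates Leibniz cross-terms of the form $\langle d\eta, d_x\Delta^{q-1}K_{\omega,p}\rangle$ and lower. These have to be shown to match the corresponding cross-terms in $\Delta^q\bigl(\eta\,e^{-\Delta/(4\pi p)}\bigr)$ up to $O(p^{-1})$ in operator norm; they do not obviously vanish, and nothing in your sketch accounts for them. The paper first proves the case $\eta=1$ and then handles these terms by a parallel analysis using a heat-kernel expansion with one spatial derivative in $x_0$.

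A smaller caution: your ``Taylor expansion of $f$ around $x$'' is fine as a heuristic for guessing the leading term, but the estimate must be purely $L^2\to L^2$ in $f$, so the proof itself has to remain at the kernel level. Your Schur-test remark points in the right direction; the paper implements this via the $L^2$ Gaussian bound \cite[(27)]{liu-ma-arosnricdg} rather than by expanding $f$.
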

\begin{proof}
Let $E=\C$ be the trivial holomorphic line bundle on $X$. Let
$h^E$ the metric on $E$ defined by $|\mathbf{e}|_{h^E}^2= 1$, 
here $\mathbf{e}$ is the canonical unity element of $E$. 
We identify canonically $L^p$ to $L^p\otimes E$ by section $\mathbf{e}$.

Let $h^E_\omega$ be the metric on $E$ defined by 
$|\mathbf{e}|_{h^E_\omega}^2= \eta ^{-1}$.
Let $\langle\quad \rangle_\omega$ be the Hermitian product on
$C^\infty (X, L^p\otimes E)= C^\infty (X, L^p)$
induced by $h^L, h^E_\omega$, $dv_X$ as in \eqref{n2}.
If $P_{\omega,p}(x,x')$, ($x,x'\in X$) denotes the smooth kernel of
the orthogonal projection $P_{\omega,p}$ from
$(C^\infty (X, L^p\otimes E), \left\langle\,\cdot,\cdot \right \rangle_\omega)$
onto $H^0(X, L^p\otimes E)= H^0(X, L^p)$ with respect to $dv_{X}(x)$. 
By \cite[(11)]{liu-ma-arosnricdg}, we have
\begin{equation}\label{n5}
P_{\nu, p}(x,x')= \eta(x')\, P_{\omega,p}(x,x').
\end{equation}
For $f\in C^\infty (X)$, set
 \begin{align}\label{n7}
\begin{split}
&K_{\omega,p}(x,x')= |P_{\omega, p}(x,x')|^2_{(h^{L^p}\otimes h^E_\omega)_x\otimes
(h^{L^{p*}}\otimes h^{E^*}_\omega)_{x'}},\\
 &(K_{\omega,p}f)(x)=  \int_X  K_{\omega,p}(x,y)f(y) dv_X(y).
\end{split}
\end{align}
Then by  \cite[(15)]{liu-ma-arosnricdg}, we have
\begin{equation}\label{n9}
Q_{K_p} (f)(x)
= \frac{1}{R_p} \int_X  K_{\omega,p}(x,y) \eta (x)f(y) dv_X(y).
\end{equation}

Now we use the normal coordinate as in \cite{liu-ma-arosnricdg}.
Then under our identification, $P_{\omega,p}(Z,Z')$ is a function on
$Z,Z'\in T_{x_0}X$, $|Z|,|Z'|\leqslant \varepsilon$,
we denote it by $P_{\omega,p,x_0}(Z,Z')$ with complex values.

Note that
$|P_{\omega,p,x_0}(Z,Z')|^2 =P_{\omega,p,x_0}(Z,Z')\overline{P_{\omega,p,x_0}(Z,Z')}$,
thus from \cite[(19),(20)]{liu-ma-arosnricdg},  \eqref{n7}, 
there exist  $J^\prime_{r}(Z,Z')$ polynomials
in $Z,Z'$ such that \footnote{There is a misprint in \cite[(25)]{liu-ma-arosnricdg},
we need to move a factor $\frac{1}{p^{2n}}$ into the parenthesis,
 thus $\frac{1}{p^{2n+1}} \Delta_Z\Big( K_{\omega,p,x_0}\cdots $
 therein should be read as $\frac{1}{p} \Delta_Z
\Big(\frac{1}{p^{2n}} K_{\omega,p,x_0}\cdots $}
\begin{multline}\label{n15}
\left |\frac{1}{p^{q}} \Delta_Z^q
\Big(\frac{1}{p^{2n}} K_{\omega,p,x_0}(Z,Z')\right.\\
\left. - 
\Big(1+  \sum_{r=2}^k \frac{1}{p^{r/2}}  J^\prime_{r} (\sqrt{p} Z,\sqrt{p} Z')\Big)
e ^{-\pi p |Z-Z^\prime|^2} \Big)\right|_{C^0(X)}\\
\leqslant C p^{-(k+1)/2}
(1+|\sqrt{p} Z|+|\sqrt{p} Z'|)^N
\exp (- C_0 \sqrt{p} |Z-Z'|)+ O(p^{-\infty}).
\end{multline}

For a function $f\in C^\infty(X)$, we denote it as $f_{x_0}(Z)$
a family (with parameter $x_0$) of function of $Z$ in the normal coordinate
near $x_0$.

Observe that in the normal coordinate, 
we denote by $g_{ij}(Z)= \left\langle e_i,e_j\right\rangle_Z$
with $e_i=\tfrac{\partial} {\partial Z_i}$, 
and let $(g^{ij}(Z))$ be the inverse of
the matrix $(g_{ij}(Z))$.
If $\Gamma _{ij}^l$ is the connection form of $\nabla ^{TX}$
with respect to the basis $\{e_i\}$, then we have $(\nabla
^{TX}_{e_i}e_j)(Z) = \Gamma _{ij}^l (Z) e_l$. 
Set $\Delta_0= -\sum_{j=1}^{2n} \tfrac{\partial^2} {\partial Z_j^2}$,
then  
\begin{align} \label{1n16}
\Delta_Z=\Delta_0 -\sum_{i, j=1}^{2n}
 \Big( (g^{ij}(Z)-1) \tfrac{\partial^2} {\partial Z_i \partial Z_j} 
- g^{ij}(Z) \Gamma_{ij}^k (Z)\tfrac{\partial} {\partial Z_k}\Big).
\end{align}
As $g^{ij}(Z)= 1+ O(|Z|^2), \Gamma_{ij}^k (Z)= O(|Z|)$
(cf. \cite[(1.2.19), (4.1.102)]{MM06a}), 
by recurrence, we know 
\begin{equation} \label{n19}
\Delta_Z^q e^{-\pi p |Z-Z^\prime|^2}
= \Delta_0^q  e^{-\pi p |Z-Z^\prime|^2} 
+\sum_{i=0}^2 h_i( Z,\sqrt{p},\sqrt{p}Z,\sqrt{p}Z^\prime)e^{-\pi p |Z-Z^\prime|^2}.
\end{equation}
Here $h_i(Z,a,x,y)$ are polynomials on $a,x,y$, and the degree on $a$
 is $\leqslant 2q-2+i$, moreover, the coefficients of 
 $h_i(Z,a,x,y)$ as a function on $Z$ is $C^\infty$ and $ O(|Z|^i)$.
Thus 
\begin{align} \label{1n19}
p^{-q} \Delta_Z^q e^{-\pi p |Z-Z^\prime|^2}
= p^{-q} (\Delta_0^q e^{-\pi p |Z-Z^\prime|^2})|_{Z=0}
+ p^{-q} h(\sqrt{p},\sqrt{p}Z^\prime) e^{-\pi p |Z^\prime|^2},
\end{align}
and $h(a,Z^\prime)$ is a polynomial on $a$ and $Z^\prime$,
and its degree on $a$ is $\leqslant 2q-2$.

{}From \eqref{n15}, \eqref{1n19} and \cite[(27)]{liu-ma-arosnricdg},
\begin{multline}\label{n21}
\left|p^{-n-q} \Delta^q K_{\omega,p} f
- p^{n-q} \int_{|Z^\prime|\leqslant \varepsilon} (\Delta_0^q e^{-\pi p |Z-Z^\prime|^2})|_{Z=0}  f_{x_0}(Z^\prime)
dv_X(Z^\prime)\right|_{L^2} \\
\leqslant \frac{C}{p}  \left|f\right|_{L^2}.
\end{multline}

Let $e ^{-u\Delta}(x,x')$ be the smooth kernel of the heat operator
$e^{-u \Delta}$ with respect to $dv_X(x')$.
By \cite[(35)]{liu-ma-arosnricdg}, 
there exist $\phi_{i,x_0}(Z^\prime)$ such that uniformly
for $x_0\in X$, $Z^\prime\in T_{x_0}X, |Z^\prime|\leqslant \varepsilon$,
we have the following asymptotic expansion when $u\to 0$,
\begin{multline}\label{n22}
\left| \frac{\partial ^l}{\partial u^l} \Big(e^{-u\Delta}(0, Z')
- (4\pi u)^{-n} \Big(1+ \sum_{i=1}^k u^i \phi_{i,x_0}(Z')\Big)
e^{-\frac{1}{4u} |Z^\prime|^2}\Big)\right|_{C^0(X)}\\ 
=O(u^{k-n-l+1}).
\end{multline}
 
Observe that
\begin{eqnarray}
\nonumber
\Delta^q \exp\Big(-\frac{\Delta}{4\pi p}\Big)
&=&  
(-1)^{q} (\tfrac{\partial}{\partial u} e^{-u\Delta})
\Big|_{u= \frac{1}{4\pi p}},\\
\nonumber
p^n (\Delta_0^q e^{-\pi p |Z-Z^\prime|^2})|_{Z=0}
&=& 
\Big(\Delta_0^q \exp\Big(-\frac{\Delta_0}{4\pi p}\Big)\Big)(0,Z^\prime)\\
\label{n24}
&=&
(-1)^q(\tfrac{\partial^q}{\partial u^q} e^{-u\Delta_0})
\Big|_{u= \frac{1}{4\pi p}} (0,Z^\prime).
\end{eqnarray}

By \eqref{n21}, \eqref{n22}, \eqref{n24} and \cite[(27)]{liu-ma-arosnricdg}, we have
\begin{align}\label{1n20}
\left| p^{-q}\Big(p^{-n}\Delta^q  K_{\omega,p}
-\Delta^q \exp\Big(-\frac{\Delta}{4\pi p}\Big)\Big) f \right|_{L^2}
\leqslant \frac{C}{p}  \left|f\right|_{L^2}.
\end{align}
Thus we have proved \eqref{1n4} when $\eta=1$.

If $\eta\neq 1$, set 
\begin{align}\label{1n21}
\begin{split}
&K_{\eta, \omega,p,q}(x,y)
= \langle d\eta (x), d_x \Delta^{q-1}_x K_{\omega,p}(x,y) \rangle_{g^{T^*X}},\\
&(K_{\eta, \omega,p,q}f)(x) = \int_X K_{\eta, \omega,p,q}(x,y) f(y) dv_X(y).
\end{split}
\end{align}
Then from \cite[(19),(20),(27)]{liu-ma-arosnricdg},
\eqref{n19} and \eqref{1n21}, we get
\begin{multline}\label{1n23}
\left|p^{-n-q} K_{\eta, \omega,p,q}f \right.\\
\left. - p^{n-q}  \int_{|Z^\prime|\leqslant \varepsilon}
\sum_{i=1}^{2n} (\tfrac{\partial}{\partial Z_i}\eta)(x_0,0) 
(\tfrac{\partial}{\partial Z_i} \Delta_0^{q-1}
 e^{-\pi p |Z-Z^\prime|^2})|_{Z=0}f_{x_0}(Z^\prime)
dv_X(Z^\prime)\right|_{L^2}\\
\leqslant \frac{C}{p} \left|f\right|_{L^2},
\end{multline}
here $C$ is independent on $p$.

By \cite[(33)]{liu-ma-arosnricdg}, we get the analogue of \cite[(36)]{liu-ma-arosnricdg}
\begin{multline}\label{1n24}
\Big|\frac{\partial ^l}{\partial u^l} \Big[
\langle  d\eta (x_0), d_{x_0}e^{-u\Delta} 
\rangle_{g^{T^*X}}(0,Z^\prime)\\
-  (4\pi u)^{-n} 
\sum_{i=1}^{2n} (\frac{\partial}{\partial Z_i}\eta)(x_0,0)\frac{Z_i^\prime}{2u}
 \Big(1+ \sum_{i=1}^k u^i \phi_{i,x_0}(Z')\Big)\Big) 
e^{-\frac{1}{4u} |Z^\prime|^2}\\
-  (4\pi u)^{-n} \sum_{i=1}^k u^i
\langle d\eta (x_0), (d_{x_0}\Phi_i)(0,Z^\prime)\rangle
 e^{-\frac{1}{4u} |Z^\prime|^2} \Big]
 \Big|_{C^0(X)} = O(u^{k-n-l+\frac{1}{2}}).
\end{multline}

From  \eqref{1n23}, \eqref{1n24} and \cite[(27)]{liu-ma-arosnricdg}, 
\begin{align}\label{n25}
&\left|p^{-q} \Big(p^{-n}K_{\eta,\omega,p,q} 
- \langle d\eta, d \Delta^{q-1} \exp({-\frac{\Delta}{4\pi p}})
 \rangle \Big)f \right|_{L^2}
\leqslant \frac{C}{p}  \left|f\right|_{L^2}.
\end{align}

Finally  
\begin{multline}\label{n26}
(\Delta^q (\eta K_{\omega,p}))(x,y)
=\eta(x) \Delta_x^q K_{\omega,p}(x,y)\\
-2 \langle d\eta (x), d_x\Delta_x^{q-1} K_{\omega,p}(x,y) \rangle_{g^{T^*X}}
+ \widetilde{K}_{\omega,p},
\end{multline}
here $\widetilde{K}_{\omega,p}$ has $\leqslant 2q-2$ derivative on 
 $K_{\omega,p}(x,y)$, thus 
\begin{align}\label{n29}
\left|\widetilde{K}_{\omega,p} f \right|_{L^2}
\leqslant C\left(\left|K_{\omega,p} f \right|_{L^2}
 +  \left|\Delta_x^{q-1} K_{\omega,p} f \right|_{L^2}\right).
\end{align}

Note also
$R_p= \frac{{\rm Vol}(X, dv_X)}{{\rm Vol}(X, \nu)} p^n + O(p^{n-1})$.
 From \eqref{n9}, \eqref{1n20}, \eqref{n25}-\eqref{n29}, we get \eqref{1n4}.

To get the last part of Theorem \ref{nt1}, as we noticed in \cite[\S
4.5]{DLM04a}, the constants in \cite[(19)]{liu-ma-arosnricdg} will be uniformly bounded
under our condition, thus we can take $C$ in \eqref{1n4}, 
\eqref{n25} and  \eqref{n29}
independent of $h^L$, $d\nu$.
\end{proof}

We have also $C^m$ estimates.

\begin{theorem} \label{nt2} For $m\in \N$, 
    there exists a constant $C>0$ such that for any
    $f\in C^\infty (X)$, $p\in \N$,
\begin{align}\label{n41}
\begin{split}
& \left| Q_{K_p}f- \frac{{\rm Vol}(X, \nu)}{{\rm Vol}(X, dv_X)}\eta f
\right|_{C^m(X)}
\leqslant \frac{C}{p}    \left|f\right|_{C^m(X)}.
\end{split}
\end{align}
Again the constant $C$ here is uniform bounded in the sense after \eqref{1n4}.
\end{theorem}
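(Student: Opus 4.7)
The plan is to adapt the proof of Theorem \ref{nt1} to $C^m$-norms in the base point, and without the auxiliary factor $(\Delta/p)^q$. The main new input is a $C^m$-in-base-point enhancement of the near-diagonal expansion \eqref{n15}: for every $m, k$ there exists $C>0$ such that, uniformly in $Z, Z'$ of bounded length,
\begin{multline*}
\Big|\frac{1}{p^{2n}} K_{\omega,p,x_0}(Z, Z') - \Big(1+\sum_{r=1}^k p^{-r/2}J'_r(\sqrt p Z,\sqrt p Z')\Big)e^{-\pi p|Z-Z'|^2}\Big|_{C^m(X)}\\ \leqslant Cp^{-(k+1)/2}(1+|\sqrt p Z|+|\sqrt p Z'|)^N e^{-C_0\sqrt p |Z-Z'|}+O(p^{-\infty}),
\end{multline*}
where the $C^m(X)$-norm is taken in the base point $x_0$; together with off-diagonal exponential decay for $\nabla^\alpha_x K_{\omega,p}(x,y)$ of the form $Cp^{n+|\alpha|/2}e^{-c\sqrt{p}\,d(x,y)}$ on $d(x,y)\geqslant\varepsilon$. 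Both are obtained by differentiating in $x_0$ the construction behind \eqref{n15} and are standard refinements of the tools in \cite{MM06a}.

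Granting these, by \eqref{n9} and $R_p=\frac{{\rm Vol}(X,dv_X)}{{\rm Vol}(X,\nu)}p^n+O(p^{n-1})$, and since multiplication by $\eta$ is bounded on $C^m(X)$, \eqref{n41} reduces to
\[
\Big|\, \frac{1}{R_p}\int_X K_{\omega,p}(x,y)f(y)\,dv_X(y)-\frac{{\rm Vol}(X,\nu)}{{\rm Vol}(X,dv_X)}f(x)\,\Big|_{C^m(X)}\leqslant\frac{C}{p}|f|_{C^m(X)}.
\]
Fix $x_0$ and pass to normal coordinates. The off-diagonal decay shows that the contribution of $|Z'|>\varepsilon$ to every $x_0$-derivative is $O(p^{-\infty})|f|_{C^0}$. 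On $|Z'|\leqslant\varepsilon$, insert the near-diagonal expansion and Taylor-expand $f_{x_0}(Z') = f(x_0) + \nabla f(x_0)\cdot Z' + O(|f|_{C^2}|Z'|^2)$. The constant term matches $\frac{{\rm Vol}(X,\nu)}{{\rm Vol}(X,dv_X)}f(x_0)$ after normalisation by $R_p$ (with an $O(1/p)$ error coming from the expansion of $R_p$ and from the sub-leading coefficients $J'_r$); the linear term integrates to zero by parity of Gaussian moments; the quadratic remainder produces $O(1/p)|f|_{C^2}$ because $p^{2n}\int|Z'|^2 e^{-\pi p|Z'|^2}\,dZ' = O(p^{n-1})$.

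For the $C^m$ estimate in $x_0$, differentiate the decomposition above. Derivatives fall either on the near-diagonal expansion of the kernel (controlled by the $C^m$ enhancement), on the Jacobian of the normal coordinates (smoothly bounded independently of $p$), or on the Taylor coefficients of $f$ (contributing at most $|f|_{C^m}$). Crucially, differentiating $e^{-\pi p|Z-Z'|^2}$ in $x_0$ produces polynomial factors of definite parity in $\sqrt p(Z-Z')$, so parity-type cancellations persist order by order in the Gaussian moments, and the overall bound keeps the form $(C/p)|f|_{C^m}$. Uniformity in the data $h^L, d\nu$ is inherited from the uniformity of the near-diagonal expansion and off-diagonal decay of $K_{\omega,p}$, exactly as at the end of the proof of Theorem \ref{nt1}.

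The main obstacle is producing the $C^m$-in-base-point refinement of the near-diagonal expansion of the Bergman kernel; once it is in hand, the rest is a direct Taylor-expansion and Gaussian-moment computation that parallels---and is in fact simpler than---the proof of Theorem \ref{nt1}.
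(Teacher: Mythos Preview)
Your proposal is correct and follows essentially the same route as the paper's proof. The paper likewise begins with the $C^m$-in-base-point refinement of the near-diagonal expansion (your first displayed inequality is precisely the paper's \eqref{n42}, derived from \cite[(19)]{liu-ma-arosnricdg}), reduces via \eqref{n9} to estimating $p^{-n}K_{\omega,p}f - f$ in $C^m(X)$, and then handles the resulting Gaussian integrals against $f_{x_0}(Z')$; the only difference is that where you write out the Taylor-plus-parity argument for the Gaussian moments, the paper packages this step as a citation to \cite[Theorem~2.29(2)]{BeGeVe} (the paper's \eqref{n44}). The uniformity discussion is identical. One small point: in your $C^0$ step you control the quadratic Taylor remainder by $|f|_{C^2}$, but then in the $C^m$ step claim the final bound involves only $|f|_{C^m}$; be careful here to track the derivative count consistently, as the paper does by appealing directly to the $C^m(X)$-in-$x_0$ norm throughout rather than Taylor-expanding first and differentiating afterwards.
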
 
\begin{proof}
Now we replace \eqref{n15} by the following equation which is again
from \cite[(19)]{liu-ma-arosnricdg}.
\begin{multline}\label{n42}
\left |\frac{1}{p^{2n}} K_{\omega,p,x_0}(Z,Z')\right.\\
\left.- \Big(1+  \sum_{r=2}^k p^{-r/2}  J^\prime_{r} (\sqrt{p} Z,\sqrt{p} Z')\Big)
e ^{-\pi p |Z-Z^\prime|^2} \Big)\right|_{C^m(X)}\\
\leqslant C p^{-(k+1)/2}
(1+|\sqrt{p} Z|+|\sqrt{p} Z'|)^N
\exp (- C_0 \sqrt{p} |Z-Z'|)+ O(p^{-\infty}).
\end{multline}
Here $C^{m}(X)$ is the $C^{m}$ norm for the parameter $x_0\in X$.
Thus 
\begin{multline}\label{n43}
\left|p^{-n}  K_{\omega,p} f\right.\\
\left. - p^n \int_{|Z^\prime|\leqslant \varepsilon}
\Big(1+  \sum_{r=2}^k p^{-r/2}  J^\prime_{r} (0,\sqrt{p} Z')\Big)
e^{-\pi p |Z^\prime|^2}f_{x_0}(Z^\prime)
dv_X(Z^\prime)\right|_{C^m(X)}\\
\leqslant C p^{-(k+1)/2} \left|f\right|_{C^m(X)}.
\end{multline}

But as in the proof of \cite[Theorem 2.29. (2)]{BeGeVe}, we get
\begin{align}\label{n44}
\begin{split}
&\left| p^n \int_{|Z^\prime|\leqslant \varepsilon} J^\prime_{r} (0,\sqrt{p} Z')
e^{-\pi p |Z^\prime|^2}f_{x_0}(Z^\prime)
dv_X(Z^\prime)\right|_{C^m(X)}
\leqslant C \left|f\right|_{C^m(X)},\\
&\left| p^n \int_{|Z^\prime|\leqslant \varepsilon} 
e^{-\pi p |Z^\prime|^2}f_{x_0}(Z^\prime)
dv_X(Z^\prime)- f(x_{0})\right|_{C^m(X)}
\leqslant C p^{-1} \left|f\right|_{C^m(X)}.
\end{split}\end{align}
From \eqref{n43},  \eqref{n44},  we get
\begin{align}\label{n45}
    \left|p^{-n}  K_{\omega,p} f -f \right|_{C^m(X)}
\leqslant C \left|f\right|_{C^m(X)}. 
\end{align}
Now by  \eqref{n9},  
\begin{align}\label{n46}
    (Q_{K_p}f)(x) =\frac{1}{R_{p}} \eta(x) (K_{\omega,p}f)(x).
\end{align}
From \eqref{n45}, \eqref{n46}, we get \eqref{n41}.

As the constant $C$ in \cite[(19)]{liu-ma-arosnricdg} is uniformly bounded under 
our condition, thus the constant $C$ in \eqref{n42} (and so 
\eqref{n41}) is uniformly bounded.
\end{proof}

\bibliographystyle{plain}
\bibliography{cflow_bib}

{\small \noindent {\tt joel.fine@ulb.ac.be }} \newline
D\'epartment de Math\'ematique,
Universit\'e Libre de Bruxelles CP218,\\ 
Boulevard du Triomphe,
Bruxelles 1050,
Belgique.\\

{\small \noindent {\tt liu@math.ucla.edu}} \newline
Center of Mathematical Science, Zhejiang University, China and\\
Department of Mathematics, UCLA, CA 90095-1555, USA.\\

{\small \noindent {\tt ma@math.jussieu.fr }} \newline
UFR de MathŽmatiques, Case 7012, Site Chevaleret\\
75205 Paris Cedex 13, France

\end{document}